\newcommand{\LF}{\overrightarrow{\Delta}}
\newtheorem{theorem}{Theorem}
\newtheorem{lemma}[theorem]{Lemma}
\newtheorem{proposition}[theorem]{Proposition}
\begin{document}
\title[Littlewood-Paley-Stein functions]{Littlewood-Paley-Stein functions for Hodge-de Rham and Schrödinger operators}
\author{Thomas Cometx}
\email{thomas.cometx@math.u-bordeaux.fr}
\maketitle              
\begin{abstract}
We study the Littlewood-Paley-Stein functions associated with Hodge-de Rham and Schrödinger operators on Riemannian manifolds. Under conditions on the Ricci curvature we prove their  boundedness on $L^p$ for $p$ in some interval $(p_1,2]$ and make a link to the Riesz Transform. An important fact is that we do not make assumptions of doubling measure or estimates on the heat kernel in this case. For $p > 2$ we give a criterion to obtain the boundedness of the vertical Littlewood-Paley-Stein function associated with Schrödinger operators on $L^p$.
\end{abstract}
\tableofcontents
\section{Introduction and main results}

Let $(M,g)$ be a non-compact Riemannian Manifold of dimension $n$. Here $g$ denotes the Riemannian metric that gives a smooth inner product $g_x$ on each tangent space $T_x M$. It induces a smooth inner product on cotangent spaces $T^*_x M$ which we denote by $<,>_x$ and the Riemannian measure $dx$. We note $|.|_x = <.,.>_x^{1/2}$ the associated norm on the cotagent space. Let $L^p$ = $L^p(M, \Lambda^1 T^* M)$ with norm $\| \omega \|_p = \left[ \int_M |\omega(x)|_x^p dx \right]^{\frac{1}{p}}$. For $p=2$, we have the scalar product on $L^2(M, \Lambda^1 T^* M)$ which we denote by $(\alpha,\beta)_{L^2} = \int_M <\alpha,\beta>_x dx$.
Let $\Delta$ be the non-negative Laplace-Beltrami on functions, $\overrightarrow{\Delta}$ be the Hodge-de Rham Laplacian on 1-differential forms. It is defined by $\overrightarrow{\Delta} =dd^* + d^*d $ where $d$ is the exterior derivative and $d^*$ its adjoint for the  $L^2$ scalar product. Let $\tilde{\Delta} = \nabla^* \nabla$ be the rough Laplacian on forms, where $ \nabla$ is the Levi-Civita connection. A link can be done between the previous two operators via the Ricci curvature. Indeed, let $R$ be the Ricci tensor on 1-forms, then the Böchner formula says

\begin{equation}\label{bochner}
 \overrightarrow{\Delta} w   = \tilde{\Delta}  w + R w.
\end{equation}

The present work is devoted to the study of $L^p$ boundedness of the Littlewood-Paley Stein functions associated with the Hodge-de Rham Laplacian on 1-forms as well as Schrödinger operators on functions. We also study the $L^p$ boundedness of the Riesz transform $\mathcal{R} = d \Delta^{-1/2}$. In contrast to previous works on Riesz transforms we do not assume the doubling volume property or a Gaussian estimate for the heat kernel.

The vertical Littlewood-Paley-Stein function for the Laplace-Beltrami operator on functions  was introduced by Stein and is defined by

\begin{align*}
    G(f)(x) &:= \left[\int_0^\infty |d e^{-t\sqrt{\Delta}} f(x)|^2 \,t  dt\right]^{1/2}.
\end{align*} 

The horizontal one is defined by 

\begin{align*}
    g(f)(x) &: = \left[\int_0^\infty |\frac{\partial}{\partial t} e^{-t\sqrt{\Delta}} f(x)|^2\,t dt\right]^{1/2}.
\end{align*} 

The functionals $G$ and $g$ are always, up to a multiplicative constant, isometries of $L^2(M)$. An interesting question is to find the range of $p$ such that $g$ and $G$ extend to bounded operators on $L^p(M)$. In \cite{steinf}, Stein proved that they are bounded on $L^p$ in the euclidean setting for all $p \in (1,\infty)$ and of weak type (1,1). In \cite{steinb}, he proved the boundedness of $G$ for $1 < p < \infty$ in the case where M is a compact Lie group and for $p \in (1,2]$ without any assumption ont he manifold. He also proved the boundedness of $g$ for a general Markov semigroup. The boundedness of the horizontal Littlewood-Paley-Stein function is related to the existence of $H^\infty$ functional calculus for the generator of the semigroup (see Cowling et al. \cite{mcint}).
 Coulhon, Duong and Li proved in \cite{lps} that if the heat kernel admits a Gaussian upper estimate and the manifold satisfies the volume doubling property, then $G$ is of weak type $(1,1)$. In \cite{lohoue}, Lohoué treated the case of Cartan-Hadamard manifolds. In \cite{Meyer}, \cite{Meyer2} Paul-André Meyer studied these functionals with probabilistic methods.

The boundedness of $G$ is linked with the boundedness of the Riesz transform $\mathcal{R} = d \Delta^{-1/2}$. Riesz transform always extends to a bounded operator from $L^2(M)$ to $L^2(M, \Lambda^1 T^* M)$ and it is a major question in harmonic analysis to find the range of $p$ for which it extends to a bounded operator on $L^p$. It is the case for $p\in (1,2)$ under the assumptions of volume doubling property and Gaussian upper estimate \cite{riesz2}. For $p>2$  the situation is complicate. The Riesz transform is bounded on $L^p$ for all $p \in (1, \infty)$ if the Ricci curvature is non-negative by a well known result by Bakry \cite{bakry}. A counter example for large $p$ is given in \cite{riesz2} for a manifold satisfying the volume doubling property and the Gaussian bound. We refer to Carron-Coulhon-Hassell \cite{carron} for precise results on such manifolds. A sufficient condition for the boundedness of the Riesz transform in terms of the negative part of the Ricci curvature is given by Chen-Magniez-Ouhabaz \cite{chenmagouh}.

For manifolds either without the volume doubling property or the Gaussian bound little is known. It is an open problem wether the Riesz transform is always bounded on $L^p$ for $p \in (1,2)$.

In this article, we study the boundedness of the vertical Littlewood-Paley-Stein functions associated with the Hodge-de Rham operator defined as follows

\begin{align*}
    G_{\overrightarrow{\Delta}}^+(\omega)(x) &:= \left[\int_0^\infty |d e^{-t\sqrt{\LF}} \omega|_x^2\, tdt)\right]^{1/2}, \\
     G_{\overrightarrow{\Delta}}^-(\omega)(x) &:= \left[\int_0^\infty |d^* e^{-t\sqrt{\LF}} \omega(x)|^2\, t dt)\right]^{1/2}, \\
     G_{\overrightarrow{\Delta}}(\omega)(x) &:= \left[\int_0^\infty |\nabla e^{-t\sqrt{\LF}} \omega|_x^2\, t dt)\right]^{1/2}.
\end{align*}

Note that $d e^{-t\sqrt{\LF}} \omega$ is a 2-differential form and $|d e^{-t\sqrt{\LF}} \omega|_x$ is defined as before. We also define the functional 

\begin{equation*}
    H_{\overrightarrow{\Delta}} (\omega)(x) := \left( \int_0^\infty |\nabla e^{-t\LF} \omega|_x^2 + <(R^+ + R^-) e^{-t\LF} \omega, e^{-t\LF} \omega>_x dt \right).
\end{equation*}
where $R^+$ and $R^-$ respectively are the positive and negative part of the Ricci curvature. 

One can also define the horizontal functions for these operators by $$\overrightarrow{g}(\omega)(x) = \left[\int_0^\infty |\frac{\partial}{\partial t} e^{-t\sqrt{\LF}} \omega|_x^2 t dt\right]^{1/2}.$$

Our main contribution is the $L^p$ boundedness of the vertical functional $H_{\overrightarrow{\Delta}}$ associated with the Hodge Laplacian. We work outside the usual setting, that is without assuming the manifold has the volume doubling property, or  its heat kernel satisfies a Gaussian estimate. Instead, we rely on two other hypothesis : a maximal inequality for the semigroup and subcriticality of the negative part of the Ricci curvature. Under these properties we prove the boundedness of all the previous vertical Littlewood-Paley-Stein functions. More precisely

\begin{theorem}\label{theoH1}
Suppose that the negative part $R^-$ of the Ricci curvature is subcritical with rate $\alpha \in (0,1)$ that is, for all $\omega \in D(\LF)$
    \begin{equation}\label{soucr}
       (R^- \omega, \omega)_{L^2} \leq \alpha ((\tilde{\Delta} + R^+) \omega, \omega)_{L^2}.
    \end{equation}
Let $p_1 = \frac{2}{1+\sqrt{1-\alpha}}$.  Given $ p \in (p_1,2]$ and assume that $e^{-t\LF}$ satisfies the maximal inequality
    
    \begin{equation} \label{maxineq}
        \| \sup_{t>0} |e^{-t\LF} \omega |_x \|_p \leq C \|\omega\|_p.
    \end{equation} Then $H_{\overrightarrow{\Delta}}$, $G_{\overrightarrow{\Delta}}^+$, $G_{\overrightarrow{\Delta}}^-$ and $G_{\overrightarrow{\Delta}}$ are bounded on $L^p$. They are also bounded on $L^q$ for all $q \in (p,2]$.
\end{theorem}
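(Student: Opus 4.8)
The plan is to prove the bound for $H_{\overrightarrow{\Delta}}$ first, since that functional is built from exactly the quantity that appears naturally when one differentiates the energy of the heat semigroup, and then deduce the bounds for $G_{\overrightarrow{\Delta}}^+$, $G_{\overrightarrow{\Delta}}^-$ and $G_{\overrightarrow{\Delta}}$ from it. The starting point is the Bochner formula \eqref{bochner}, which gives $\overrightarrow{\Delta} = \tilde\Delta + R^+ - R^-$; writing $\overrightarrow{\Delta}_0 := \tilde\Delta + R^+$ for the "good" part, the subcriticality hypothesis \eqref{soucr} says precisely that $R^-$ is $\alpha$-bounded with respect to the nonnegative self-adjoint operator $\overrightarrow{\Delta}_0$, so that $\overrightarrow{\Delta}$ is self-adjoint and nonnegative and, by the KLMN-type argument, the quadratic form of $\overrightarrow{\Delta}$ is comparable to that of $\overrightarrow{\Delta}_0$ with constants $1-\alpha$ and $1+\alpha$. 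The key computation is the Bochner/energy identity: for $\omega$ nice,
\begin{equation*}
-\frac{1}{2}\frac{\partial}{\partial t}\,|e^{-t\overrightarrow{\Delta}}\omega|_x^2 \;=\; \langle \overrightarrow{\Delta} e^{-t\overrightarrow{\Delta}}\omega, e^{-t\overrightarrow{\Delta}}\omega\rangle_x \;-\; \tfrac12\tilde\Delta\big(|e^{-t\overrightarrow{\Delta}}\omega|_x^2\big),
\end{equation*}
so that after integrating against a suitable test function and using $\langle\overrightarrow{\Delta}u,u\rangle = |\nabla u|^2 + \langle(R^+-R^-)u,u\rangle$ pointwise (in the form sense), the integrand defining $H_{\overrightarrow{\Delta}}$, namely $|\nabla e^{-t\overrightarrow{\Delta}}\omega|_x^2 + \langle(R^++R^-)e^{-t\overrightarrow{\Delta}}\omega,e^{-t\overrightarrow{\Delta}}\omega\rangle_x$, is controlled by $-\partial_t|e^{-t\overrightarrow{\Delta}}\omega|^2$ plus a harmless curvature remainder, up to the factor governed by $\alpha$ — this is exactly where $p_1 = 2/(1+\sqrt{1-\alpha})$ enters.

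Next I would run Stein's classical duality/maximal-function argument adapted to this setting. For $p\in(p_1,2]$, test $H_{\overrightarrow{\Delta}}(\omega)^2$ against a nonnegative $\phi\in L^{(p/2)'}$: one is led to estimate
\begin{equation*}
\int_0^\infty\!\!\int_M \Big(|\nabla e^{-t\overrightarrow{\Delta}}\omega|_x^2 + \langle(R^++R^-)e^{-t\overrightarrow{\Delta}}\omega,e^{-t\overrightarrow{\Delta}}\omega\rangle_x\Big)\,\phi\,dx\,dt,
\end{equation*}
and by the energy identity above this is bounded (up to the $\alpha$-dependent constant) by $\int_0^\infty\int_M \big(-\tfrac12\partial_t|e^{-t\overrightarrow{\Delta}}\omega|_x^2\big)\phi + (\text{terms absorbed using }\tilde\Delta) \le \int_M |\omega|_x\,\big(\sup_{t>0}|e^{-t\overrightarrow{\Delta}}\omega|_x\big)\,M\phi$-type expressions, where the Laplace-Beltrami heat semigroup acting on the scalar function $\phi$ produces a scalar maximal function bounded on $L^{(p/2)'}$ by Stein's maximal theorem for symmetric Markov semigroups. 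Combining Hölder with the hypothesis \eqref{maxineq} for $e^{-t\overrightarrow{\Delta}}$ and the scalar maximal bound closes the estimate for $H_{\overrightarrow{\Delta}}$ on $L^p$. Since $|\nabla e^{-t\overrightarrow{\Delta}}\omega|_x^2 \le$ the integrand of $H_{\overrightarrow{\Delta}}$, the bound for $G_{\overrightarrow{\Delta}}$ with the semigroup $e^{-t\overrightarrow{\Delta}}$ is immediate; to pass to $e^{-t\sqrt{\overrightarrow{\Delta}}}$ and to $G_{\overrightarrow{\Delta}}^{\pm}$ I would use the subordination formula $e^{-t\sqrt{\overrightarrow{\Delta}}} = \int_0^\infty \frac{t}{2\sqrt\pi}\,s^{-3/2}e^{-t^2/4s}\,e^{-s\overrightarrow{\Delta}}\,ds$ together with $|d e^{-t\sqrt{\overrightarrow{\Delta}}}\omega|_x, |d^* e^{-t\sqrt{\overrightarrow{\Delta}}}\omega|_x \le |\nabla e^{-t\sqrt{\overrightarrow{\Delta}}}\omega|_x$ (the commutation of $d$, $d^*$ with functions of $\overrightarrow{\Delta}$ and the pointwise bound of the exterior derivative by the full covariant derivative), reducing everything to $G_{\overrightarrow{\Delta}}$.

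The last assertion — boundedness on every $L^q$ with $q\in(p,2]$ — follows because the threshold $p_1(\alpha)$ is monotone and the hypotheses \eqref{soucr}, \eqref{maxineq} are inherited (the maximal inequality on $L^p$ for a symmetric contraction semigroup propagates upward to $L^q$, $p\le q\le 2$, by interpolation with the trivial $L^2$ maximal bound coming from the $L^2$ analyticity and Stein's theorem, or simply because $q$ still lies above the same $p_1$). I expect the main obstacle to be the rigorous justification of the pointwise Bochner/energy identity and the integrations by parts when $M$ is merely complete and noncompact with no volume or heat-kernel control: one must work with the quadratic forms, use a cutoff/exhaustion argument with Davies-type or Kato-type cutoffs, and be careful that all the boundary terms vanish and that $\phi$ can be taken in a dense enough class (e.g.\ $L^\infty$ with compact support, or produced by the scalar heat semigroup) for the maximal-function step to apply. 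The bookkeeping of the constant — verifying that the form comparison with ratio $\tfrac{1+\alpha}{1-\alpha}$, or more sharply the two-sided bound giving $\sqrt{1-\alpha}$, yields exactly $p_1 = 2/(1+\sqrt{1-\alpha})$ — is the other delicate point, and I would isolate it in a preliminary lemma on the sesquilinear form of $\overrightarrow{\Delta}$.
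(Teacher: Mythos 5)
Your overall ingredient list is right (Böchner formula, subcriticality to absorb $R^-$, the maximal inequality, subordination plus the pointwise bounds $|d\omega|_x,|d^*\omega|_x\lesssim|\nabla\omega|_x$ to reduce $G^\pm_{\LF}$ and $G_{\LF}$ to $H_{\LF}$, and interpolation for the $L^q$ statement), but the core analytic step is wrong for the range of $p$ in the theorem. You propose to test $H_{\LF}(\omega)^2$ against a nonnegative $\phi\in L^{(p/2)'}$ and close the estimate by a scalar maximal function applied to $\phi$. That is Stein's scheme for $p>2$; for $p\in(p_1,2]$ one has $p/2\leq 1$, so $L^{(p/2)'}$ is not a usable dual space and Hölder in that form is unavailable. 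This is not a cosmetic issue — it is precisely why the $p\leq 2$ case needs a different mechanism.

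The paper's mechanism, which you do not reproduce, is to work directly with $|\omega_t|_x^p$ (not $|\omega_t|_x^2$) via Lemma \ref{lemmaIneq}:
\begin{equation*}
-\Delta|\omega|_x^p\ \geq\ |\omega|_x^{p-2}\bigl[p(p-1)|\nabla\omega|_x^2-p\langle\tilde\Delta\omega,\omega\rangle_x\bigr],
\end{equation*}
combined with $-\partial_t|\omega_t|_x^p=p\langle\LF\omega_t,\omega_t\rangle_x|\omega_t|_x^{p-2}$. One then forms the quantity $Q(\omega,x,t)$ with auxiliary constants $\xi,c,k$, multiplies by $|\omega_t|_x^{2-p}$ (which is the place where $p\leq 2$ is exploited), bounds $|\omega_t|_x^{2-p}\leq(\omega^*)^{2-p}$, and applies Hölder \emph{on $M$} with exponents $\tfrac{2}{2-p}$ and $\tfrac{2}{p}$, not duality in $t$. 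The subcriticality enters through Proposition \ref{propSubc} and Lemma \ref{lemmaIsubc} to make $I(t)\leq 0$, which is where $p_1=2/(1+\sqrt{1-\alpha})$ is produced; your heuristic ``$\sqrt{1-\alpha}$ from a form comparison'' does not, by itself, yield that threshold without the explicit choice of $\xi,c,k$.

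Separately, the ``Bochner/energy identity'' you wrote is incorrect as stated: one has $-\tfrac12\partial_t|\omega_t|_x^2=\langle\LF\omega_t,\omega_t\rangle_x$ with no $\tilde\Delta(|\omega_t|_x^2)$ term, and the Lichnerowicz identity $\tfrac12\tilde\Delta|\omega|_x^2=\langle\tilde\Delta\omega,\omega\rangle_x-|\nabla\omega|_x^2$ is a distinct equality which must be invoked separately; likewise $\langle\LF u,u\rangle_x$ is \emph{not} pointwise equal to $|\nabla u|_x^2+\langle(R^+-R^-)u,u\rangle_x$ — that equality holds only after integration over $M$. Because the paper's argument is genuinely pointwise in $x$ (and only integrates over $M$ at the very end via Hölder), this distinction matters. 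To fix the proposal you should abandon the $L^{(p/2)'}$-duality step, establish Lemma \ref{lemmaIneq}, and run the argument on $|\omega_t|_x^p$ as in the paper.
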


Note that we can write Theorem 1 restricted to exact 1-forms assuming \eqref{maxineq} only on these forms.

As a consequence we obtain the following result on the Riesz Transform.

\begin{theorem}\label{theoRiesz}
 Suppose that the negative part of the Ricci curvature $R^-$ satisfies \eqref{soucr} for some $\alpha \in (0,1)$ and let $p_1 = \frac{2}{1+\sqrt{1-\alpha}}$. Given $p$ in $(p_1,2]$ and suppose that $e^{-t\LF}$ satisfies the maximal inequality \eqref{maxineq}. Then the Riesz transform is bounded on $L^{p'}$ where $\frac{1}{p} + \frac{1}{p'} = 1$. It is also bounded on $L^q$ for all $q \in [2,p')$.
\end{theorem}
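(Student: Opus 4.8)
The plan is to deduce Theorem~\ref{theoRiesz} from Theorem~\ref{theoH1} by a duality argument together with the commutation relation $d\Delta = \LF d$ on functions. Concretely, for a function $f$ and a $1$-form $\omega$, I would start from the identity
\begin{equation*}
 (d\Delta^{-1/2} f, \omega)_{L^2} = c\int_0^\infty \left( d e^{-t\sqrt{\Delta}} f, \sqrt{\LF}\, e^{-t\sqrt{\LF}} \omega \right)_{L^2} \frac{dt}{?},
\end{equation*}
obtained by writing $\Delta^{-1/2}$ as a Calder\'on-type integral $\Delta^{-1/2} = c\int_0^\infty e^{-t\sqrt{\Delta}}\, \frac{dt}{\sqrt{t}}\cdot(\dots)$ and using the spectral-theoretic reproducing formula, so that the pairing gets distributed as $t\,dt$ against $d\partial_t e^{-t\sqrt{\Delta}}f$ on one side and $\partial_t e^{-t\sqrt{\LF}}\omega$ on the other; the intertwining $d e^{-t\sqrt{\Delta}} = e^{-t\sqrt{\LF}} d$ is what lets everything on the function side be re-expressed on the form side. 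Applying Cauchy--Schwarz in $t$ pointwise then bounds this by $\int_M G(f)(x)\, \overrightarrow{g}(\omega)(x)\, dx$ up to constants, where $G$ is Stein's vertical function on functions and $\overrightarrow{g}$ is the horizontal function for $\LF$.

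The next step is to control the two factors. For $G$ on functions we have Stein's classical result: $G$ is bounded on $L^q$ for all $q\in(1,2]$ with no geometric assumption (cited in the introduction), so in particular on $L^q$ for $q\in(p_1,2]$. For $\overrightarrow{g}$, I would observe that $\overrightarrow{g}(\omega) \le G_{\overrightarrow{\Delta}}^+(\omega) + G_{\overrightarrow{\Delta}}^-(\omega)$ pointwise, because $\partial_t e^{-t\sqrt{\LF}}\omega = -\sqrt{\LF}\, e^{-t\sqrt{\LF}}\omega$ and $\|\sqrt{\LF}\,\eta\|_{L^2}^2 = \|d\eta\|^2 + \|d^*\eta\|^2$, which after inserting the spectral decomposition and a subdivision-of-the-integral argument gives the pointwise domination; Theorem~\ref{theoH1} then gives boundedness of $G_{\overrightarrow{\Delta}}^\pm$ on $L^p$ (and on $L^q$, $q\in(p,2]$) under \eqref{soucr} and \eqref{maxineq}. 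Combining via H\"older, $|(d\Delta^{-1/2}f,\omega)_{L^2}| \le C\|G(f)\|_{q'}\,\|\overrightarrow{g}(\omega)\|_{q}$ with $1/q+1/q'=1$; taking $q\in(p_1,2]$ makes $q'\in[2,p')$ with $q'\to p'$ as $q\to p_1^+$, and duality of $d\Delta^{-1/2}$ with $\Delta^{-1/2}d^*$ converts this into $\|d\Delta^{-1/2}f\|_{q'}\le C\|f\|_{q'}$, i.e.\ $L^{p'}$-boundedness and $L^q$-boundedness for all $q\in[2,p')$ as claimed.

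The main obstacle I anticipate is making the Calder\'on reproducing identity rigorous at the level of the pairing: one must justify the double integral in $t$, the interchange of $d^*$ and the integral, and that the boundary/low-frequency terms vanish (i.e.\ $e^{-t\sqrt{\Delta}}f \to 0$ appropriately as $t\to\infty$, using that $f$ is in a dense class such as $R(\Delta^{1/2})\cap L^2$ or $f$ with $0\notin$ spectrum issues handled by density), so that the whole of $\Delta^{-1/2}$ is reconstructed. A clean way is to work first with $f$ in a dense subspace, prove the pairing inequality there, and then extend by density using that the right-hand side is a bounded bilinear form; one should also be slightly careful that the constant $p_1$ is governed entirely by Theorem~\ref{theoH1} (the $G$ factor imposes no constraint since it is bounded down to $q>1$), so the endpoint $p_1$ and the interval $(p_1,2]$ transfer verbatim to $p'$ and $[2,p')$. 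A secondary technical point is the pointwise domination $\overrightarrow{g}\lesssim G_{\overrightarrow{\Delta}}^+ + G_{\overrightarrow{\Delta}}^-$, which requires a quadratic-estimate comparison (a Fubini/Schur-type argument splitting $\int_0^\infty$ dyadically) rather than a bald inequality, but this is routine once set up.
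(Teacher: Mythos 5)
There is a genuine gap, and it is exactly in the place your Calder\'on pairing puts the vertical square function. When you write
\begin{equation*}
(d\Delta^{-1/2}f,\omega)_{L^2}\ \lesssim\ \int_M G(f)(x)\,\overrightarrow{g}(\omega)(x)\,dx\ \le\ \|G(f)\|_{q'}\,\|\overrightarrow{g}(\omega)\|_{q},\qquad q\in(p_1,2],\ q'\in[2,p'),
\end{equation*}
you need the \emph{vertical} function $G$ on functions to be bounded on $L^{q'}$ with $q'\ge 2$. Stein's universal result (the one cited in the introduction) gives boundedness of $G$ only for $q\in(1,2]$; for exponents above $2$ it requires extra hypotheses (nonnegative Ricci, compact Lie group, Gaussian bound plus doubling, etc.), none of which are assumed here. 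So the factor $\|G(f)\|_{q'}$ is not controlled, and the argument does not close. You mention in passing ``the $G$ factor imposes no constraint since it is bounded down to $q>1$'', but that is precisely backwards: your H\"older pairing forces $G$ to live on the \emph{large} exponent, which is the hard side.

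The paper avoids this by shifting the analyticity/derivative factor onto the other leg. It proves the dual estimate for $d^*\LF^{-1/2}$ on $L^{q'}$, $q'\le 2$, and applies the \emph{scalar} reproducing identity $\int_M fg = 4\int_0^\infty\int_M [\partial_t e^{-t\sqrt{\Delta}}f][\partial_t e^{-t\sqrt{\Delta}}g]\,t\,dx\,dt$ with $f=d^*\LF^{-1/2}\omega$. Commuting $d^*\LF=\Delta d^*$ turns the $f$-factor into $d^* e^{-t\sqrt{\LF}}\omega$, so Cauchy--Schwarz in $t$ yields $\int_M G^-_{\LF}(\omega)\,g(g)\,dx$ with the \emph{horizontal} $g$ on the function side. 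Since the horizontal $g$ is bounded on $L^q$ for \emph{every} $q\in(1,\infty)$, the large exponent is harmless, and $G^-_{\LF}$ on $L^{q'}$ with $q'\le 2$ is exactly what Theorem~\ref{theoH1} provides. That is the content of Theorem~\ref{theoRieszLPS}, and Theorem~\ref{theoRiesz} is then a two-line consequence. A secondary but also real flaw in your writeup: the ``pointwise domination'' $\overrightarrow{g}\lesssim G^+_{\LF}+G^-_{\LF}$ does not hold, because $\sqrt{\LF}$ is nonlocal; $\|\sqrt{\LF}\eta\|_2^2=\|d\eta\|_2^2+\|d^*\eta\|_2^2$ is an $L^2$ identity and gives no pointwise bound on $|\sqrt{\LF}\eta|_x$. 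This step is unnecessary once you use the paper's pairing, since $G^-_{\LF}$ appears directly.
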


Magniez, in \cite{mag}, proved the boundedness of $\mathcal{R}$  on $L^p$ for $p$ in a slightly bigger interval by assuming the doubling property and the Gaussian upper estimate for the heat kernel. As mentioned above, we do not assume any of these two properties. Instead we assume the maximal inequality (\ref{maxineq}). Note that if the heat semigroup on functions satisfies the so-called  gradient estimate \begin{equation}\label{gradientestimate}
|de^{-t\Delta} f|_x \leq M e^{-\delta t \Delta} |d f|_x
\end{equation}with some postive constants $M$ and $\delta$ then \eqref{maxineq} is satisfied on exact forms. Indeed, in this case, $|e^{-t\overrightarrow{\Delta}}df|_x = |d e^{-t\Delta} f|_x\leq M  e^{-\delta t{\Delta}} |df|_x \leq M \sup_{t>0} e^{-\delta t\Delta} |df|_x$ where the right hand side term is bounded on $L^p$ for $p \in(1,\infty)$ because $e^{-t\Delta}$ is submarkovian (see \cite{steinb}, p 73). If in addition one has $L^p$-decomposition on forms $\omega = df + d^* \beta$ with $\|\omega\|_p \simeq \|d f\|_p$ then \eqref{gradientestimate} implies \eqref{maxineq}. The latter decomposition is not true on all non-compact Riemannian manifolds. See \cite{riesz} for a discussion on this property. If $R \geq 0$ then obviously \eqref{maxineq} and \eqref{gradientestimate} are satisfied since $|e^{-t\LF} \omega|_x \leq e^{-t\Delta} | \omega|_x$ as a consequence of the Böchner formula \eqref{bochner}.

We also study the boundedness of the Littlewood-Paley-Stein function associated with the Schrödinger operators $L = \Delta + V$, namely

\begin{align*}
   G_L(f)(x) &:= \left[\int_0^\infty \left( |d e^{-tL^{1/2}} f|_x^2 + |V|(e^{-tL^{1/2}}f)^2(x) \right) t dt\right]^{1/2}, \\
   H_L(f)(x) &:= \left[\int_0^\infty \left( |d e^{-tL} f|_x^2 + |V|(e^{-tL}f)^2(x) \right)  dt\right]^{1/2}. \\
\end{align*}
We use the classical notation $V^+$ and $V^-$ for the positive and negative parts of $V$. We take $V^+ \in L^1_{loc}(M)$. The Schrödinger operator $L$ is defined by the quadratic form techniques.

\begin{theorem}\label{theoHL}
Let $L = \Delta + V$ be a Schrödinger operator such that the negative part $V^-$ is subcritical with rate $\alpha \in (0,1)$ in the sense
\begin{equation}\label{48}\int_M V^- f^2 dx \leq \alpha \int_M \left( V^+ f^2 + |\nabla f|^2 \right) dx, \quad \forall f \in C_c^\infty(M) .\end{equation}
Then $H_L$ and $G_L$ are bounded on $L^p(M)$ for all $p \in (p_1, 2]$ where $p_1 = \frac{2}{1+\sqrt{1-\alpha}}.$ 
\end{theorem}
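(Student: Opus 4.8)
The plan is to run the same Bakry-type argument that (presumably) proves Theorem~\ref{theoH1}, but now on functions, exploiting the fact that the Schrödinger operator $L=\Delta+V$ plays the role that the Hodge Laplacian $\LF$ played there. The first step is to set up the subordinated semigroup $e^{-tL^{1/2}}$ via the subordination formula and record that, by the subcriticality hypothesis \eqref{48} with rate $\alpha$, the operator $L$ is self-adjoint, bounded below, and generates a semigroup $e^{-tL}$ which is a contraction on $L^p$ for $p\in(p_1,2]$ with $p_1=\frac{2}{1+\sqrt{1-\alpha}}$. This is the point where \eqref{48} enters: writing $L = (\Delta+V^+) - V^-$ and using the quadratic-form inequality $0\le (1-\alpha)\,q_{\Delta+V^+}(f)\le q_L(f)\le q_{\Delta+V^+}(f)$, one gets domination of $|e^{-tL}f|$ by $e^{-t(1-\alpha)(\Delta+V^+)}|f|$ up to the usual Stein-type interpolation, yielding $L^p$-contractivity on exactly the stated range; I expect this is proved as a lemma earlier in the paper (analogous to the forms case) and can be quoted.

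Second, I would establish the $L^2$ identity: integrating by parts in $t$ and using that $-L$ generates the semigroup, one shows $\|H_L f\|_2^2 \simeq \|f\|_2^2$ (up to a multiplicative constant), because $\int_0^\infty \big(|de^{-tL}f|_x^2 + |V|(e^{-tL}f)^2\big)\,dx\,dt$ is controlled by $\int_0^\infty q_{|L|}(e^{-tL}f)\,dt$ and the latter telescopes. The key algebraic point is that $|d e^{-tL}f|_x^2 + V^+(e^{-tL}f)^2 = q_{\Delta+V^+}(e^{-tL}f)$ pointwise-after-integration, and $V^-(e^{-tL}f)^2$ is absorbed by subcriticality, so $H_L$ is comparable to the "good" quadratic functional built from $\Delta+V^+$ alone.

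Third comes the $L^p$ bound for $p<2$. Here I would follow Stein's method as adapted to the generator (not the subordinated semigroup): introduce the functional on functions and compare $H_L f$ with a square function built from $e^{-tL}$, then apply the identity
\[
\partial_t (e^{-tL}f)^2 = -2\,(e^{-tL}f)(L e^{-tL}f)
\]
together with the pointwise inequality $L(u^2) \le 2u\,Lu$ valid for the diffusion part (the Schrödinger potential requires care: $V^+$ contributes favorably, $V^-$ is handled by \eqref{48}). This produces a differential inequality of the form $\partial_t E_t \le \Delta$-terms $-\,2H_L$-integrand, which after integrating against $(e^{-tL}f)^{p-2}$ and using the $L^p$-contractivity of $e^{-tL}$ on $(p_1,2]$ and Hölder's inequality gives $\|H_L f\|_p \le C_p\|f\|_p$. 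For $G_L$ (the subordinated version) I would then deduce boundedness either by the subordination formula $e^{-tL^{1/2}} = c\int_0^\infty e^{-u}e^{-t^2 L/4u}\,u^{-1/2}du$ comparing $G_L$ to $H_L$, or directly by the Stein argument applied to $e^{-tL^{1/2}}$, noting that $L^{1/2}$ inherits the needed contractivity. The main obstacle is the third step: the Schrödinger potential breaks the pure diffusion structure, so the pointwise inequality $L(u^2)\le 2uLu$ fails in general once $V^-\neq 0$, and one must replace it by an integrated version where the bad term $\int V^-(e^{-tL}f)^2$ is dominated via \eqref{48} by $\alpha$ times the good terms — this is precisely what forces the exponent $p_1=\frac{2}{1+\sqrt{1-\alpha}}$, exactly as in Theorem~\ref{theoH1}, and keeping track of the constants through the Stein computation so that the range comes out sharp is the delicate part.
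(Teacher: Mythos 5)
Your proposal follows essentially the same Stein-type route as the paper: reduce to $H_L$ via subordination, run the differential-inequality argument against $f_t^p$, absorb the bad $V^-$ term via subcriticality, then finish with Hölder and a maximal estimate. Two points, however, are genuine gaps in your sketch.

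First, your route to $L^p$-contractivity of $e^{-tL}$ is not correct. From the quadratic-form sandwich $(1-\alpha) q_{\Delta+V^+} \le q_L \le q_{\Delta+V^+}$ you claim a pointwise domination $|e^{-tL}f| \le e^{-t(1-\alpha)(\Delta+V^+)}|f|$ and then interpolation. Form inequalities do \emph{not} imply pointwise semigroup domination, and this particular inequality is false in general (what one does have, by Trotter--Kato, is the opposite-direction lower bound $e^{-tL}|f| \ge |e^{-t(\Delta+V^+)}f|$ coming from the negative potential $-V^-$). The paper instead proves contractivity of $e^{-tL}$ on $L^p$ for $p\in(p_1,p_1')$ directly, by computing $\frac{d}{dt}\|e^{-tL}f\|_p^p$, applying Lemma~\ref{lemmaSubcrV} (the $L^p$-version of \eqref{48}, obtained by plugging $|f|^{p/2}$ into the form inequality) together with an inequality analogous to Lemma~\ref{lemmaMagn}, and observing the derivative is nonpositive exactly when $4(p-1)\ge\alpha p^2$. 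The range $(p_1,2]$ comes out of that computation, not out of interpolation.

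Second, in the Hölder step you need the maximal inequality $\|\sup_{t>0} e^{-tL}f\|_p \le C\|f\|_p$, not merely $L^p$-contractivity: after Hölder with exponents $2/(2-p)$ and $2/p$ the first factor is $\|f^*\|_p^{p(1-p/2)}$ with $f^*=\sup_{t>0}|e^{-tL}f|$. Contractivity alone does not control $f^*$. The paper invokes a theorem of Le Merdy: for a positive, analytic contraction semigroup on $L^p$ one has $\|f^*\|_p \lesssim \|f\|_p$, and positivity plus analyticity of $e^{-tL}$ must be noted. You also need the small reduction $f = f^+ - f^-$ to nonnegative data so that $f_t>0$, which makes the algebra with $f_t^{p-2}$ legitimate; the paper does this at the outset. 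None of this changes the overall architecture of your argument, but both points would need to be repaired for the proof to go through.
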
 The result was known for non-negative $V$ (see Ouhabaz \cite{ouh}). In this case $\alpha = 0$ and then $H_L$ and $G_L$ are bounded on $L^p$ for $p \in (1,2]$.  
The paper is organized as follows. In section 2, we recall links between Riesz transform and Littlewood-Paley-Stein functions. In section 3 we prove Theorems 1 and 2. In section 4, using the same techniques, we give a short proof of a result by Bakry in \cite{bakry} saying that the modified Riesz transform $d(\Delta + \epsilon)^{-1/2}$ is bounded for $p > 2$ if we suppose the Ricci curvature is bounded from below. In section 5, using the same techniques again, we study the boundedess of the horizontal LPS function associated with $\LF$. In section 6 we prove Theorem \ref{theoHL}. In section 7, we assume the doubling property and the Gaussian estimate for heat kernel to give a criterion on $V$ to obtain the boundedness of $H_L$ in the case $p>2$. 

\textbf{Notations :} Let $p>1$. During all the paper, we denote by $L^p$ either the spaces $L^p(M)$ and $L^p(M, \Lambda^1 T^*M)$ when the context is clear. We sometimes denote by $L^p(\Lambda^1 T^* M)$ the space $L^p(M, \Lambda^1 T^*M)$. We denote by $p'$ the conjugate expontent of $p$ defined by $\frac{1}{p} + \frac{1}{p'} =1$. We denote by $C_c^\infty(M)$ the space of smooth compactly supported functions on $M$. We often use $C$ and $C'$ for possibly different positive constants.

\section{The Littlewood-Paley-Stein fuctions and the Riesz Transform}

The aim of this section is to recall the links between Littlewood-Paley-Stein functions, Riesz transforms and other estimates. We show a duality argument which shows why the function $G^-_{\overrightarrow{\Delta}}$ is useful to study the Riesz transform.

First, we have the following theorem which is taken from \cite{riesz}.

\begin{theorem}\label{theoCoulh}
Let $p \in (1, \infty)$. If $G$ is bounded on $L^p(M)$ and $ \overrightarrow{g}$ is bounded from $L^{p'} (\Lambda^1 T^*M)$ to $L^{p'}(M)$, then the Riesz transform extends to a bounded operator from $L^p(M)$ to $L^p(\Lambda^1 T^* M)$.  
\end{theorem}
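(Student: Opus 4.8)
The plan is to combine a duality argument with a Calderón-type reproducing formula, the two hypotheses entering only at the very end. Write $B=\sqrt{\overrightarrow{\Delta}}$ for the generator of the Poisson semigroup on $1$-forms. The algebraic backbone is the intertwining relation $\overrightarrow{\Delta}\,d=d\,\Delta$, which forces $d$ to commute with the functional calculus of $\Delta$; in particular $e^{-tB}d=d\,e^{-t\sqrt{\Delta}}$ and $\mathcal{R}=d\Delta^{-1/2}=\overrightarrow{\Delta}^{-1/2}d$ on $L^2(M)$. Hence, for $f$ in a suitable dense class (e.g.\ $f\in L^2\cap L^p$ orthogonal to $\ker\Delta$),
\[
\partial_t\big(e^{-tB}\mathcal{R}f\big)=\partial_t\big(d\,\Delta^{-1/2}e^{-t\sqrt{\Delta}}f\big)=-\,d\,e^{-t\sqrt{\Delta}}f,
\]
and $\mathcal{R}f$ is orthogonal to $\ker\overrightarrow{\Delta}$ because $(\mathcal{R}f,\eta)_{L^2}=(\Delta^{-1/2}f,d^*\eta)_{L^2}=0$ for every $\eta\in\ker\overrightarrow{\Delta}$.

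First I would reduce the theorem to a dual estimate. Since $\mathcal{R}$ is bounded on $L^2$, since $L^2\cap L^p$ is dense in $L^p(M)$ and $L^2\cap L^{p'}$ in $L^{p'}(\Lambda^1T^*M)$, and since $(L^{p'})^*=L^p$, it suffices to prove
\[
\big|(\mathcal{R}f,\omega)_{L^2}\big|\le C\,\|f\|_p\,\|\omega\|_{p'}\qquad\text{for all }f\in L^2\cap L^p,\ \omega\in L^2\cap L^{p'};
\]
this gives $\mathcal{R}f\in L^p$ with $\|\mathcal{R}f\|_p\le C\|f\|_p$, and the bounded extension follows by density.

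The second step is the reproducing formula. From $\partial_t e^{-tB}=-Be^{-tB}$, self-adjointness, and $\int_0^\infty\mu^2 e^{-2t\mu}\,t\,dt=\tfrac14$ for $\mu>0$, the spectral theorem yields, for every $u\perp\ker\overrightarrow{\Delta}$ and every $w\in L^2(\Lambda^1T^*M)$,
\[
(u,w)_{L^2}=4\int_0^\infty\big(\partial_t e^{-tB}u,\ \partial_t e^{-tB}w\big)_{L^2}\,t\,dt.
\]
Taking $u=\mathcal{R}f$ and $w=\omega$ and inserting the identity from the first paragraph turns this into
\[
(\mathcal{R}f,\omega)_{L^2}=-\,4\int_0^\infty\big(d\,e^{-t\sqrt{\Delta}}f,\ \partial_t e^{-t\sqrt{\overrightarrow{\Delta}}}\omega\big)_{L^2}\,t\,dt.
\]
The interchange of the $t$-integral with the $L^2$-pairing and the vanishing of the boundary contributions as $t\to0$ and $t\to\infty$ I would justify by truncating to $t\in[\varepsilon,1/\varepsilon]$ and letting $\varepsilon\to0$, using only that $f,\omega\in L^2$ and elementary semigroup bounds.

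Finally, Cauchy–Schwarz in the measure $t\,dt$ applied pointwise in $x$ gives
\[
\Big|\int_0^\infty\big\langle d\,e^{-t\sqrt{\Delta}}f,\ \partial_t e^{-t\sqrt{\overrightarrow{\Delta}}}\omega\big\rangle_x\,t\,dt\Big|\le G(f)(x)\,\overrightarrow{g}(\omega)(x),
\]
and then Hölder's inequality together with the boundedness of $G$ on $L^p(M)$ and of $\overrightarrow{g}$ from $L^{p'}(\Lambda^1T^*M)$ to $L^{p'}(M)$ yields
\[
\big|(\mathcal{R}f,\omega)_{L^2}\big|\le 4\int_M G(f)(x)\,\overrightarrow{g}(\omega)(x)\,dx\le 4\,\|G(f)\|_p\,\|\overrightarrow{g}(\omega)\|_{p'}\le C\,\|f\|_p\,\|\omega\|_{p'},
\]
which is exactly the required dual estimate. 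I expect no deep obstruction here: the genuine work is the bookkeeping that makes the reproducing formula rigorous — controlling $\ker\overrightarrow{\Delta}$, the limiting argument in $t$, and checking that the intertwining identities are valid on the chosen dense class — after which the conclusion is a one-line application of Cauchy–Schwarz and Hölder.
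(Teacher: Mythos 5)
Your proof is correct and follows essentially the same route as the paper: duality, a spectral/reproducing formula in $t\,dt$, the intertwining relation $d\Delta^{-1/2}=\overrightarrow{\Delta}^{-1/2}d$ to slide the semigroup between $\Delta$ and $\overrightarrow{\Delta}$, then Cauchy--Schwarz in $t$ and Hölder in $x$. The only cosmetic difference is that you apply the reproducing formula on $1$-forms to the pair $(\mathcal{R}f,\omega)$, whereas the paper (in the explicit proof of the analogous Theorem~\ref{theoRieszLPS}, which it declares to be ``the same as for Theorem~\ref{theoCoulh}'') applies the scalar reproducing formula to the adjoint expression $d^*\overrightarrow{\Delta}^{-1/2}\omega$ paired against $g\in L^p$; these are dual reformulations of the same identity, and your version is the one that naturally produces $G(f)$ and $\overrightarrow{g}(\omega)$ as stated in the theorem.
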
 

The subordination formula for positive and self-adjoint operators

\begin{equation}\label{sub}
    e^{-tA^{\frac{1}{2}}} = \frac{1}{\sqrt{\pi}} \int_0^\infty e^{-\frac{t^2}{4s} A} e^{-s} \frac{ds}{\sqrt{s}}
\end{equation}
gives the following pointwise domination (see e.g. \cite{lps}).

\begin{proposition}\label{propsubor} For all $f \in C^\infty_c(M)$, for all $\omega \in C^\infty_c(\Lambda^1 T^*M)$, for all $x \in M$,
\begin{equation}\label{pointy}
  \left\{
      \begin{aligned} 
     G(f)(x) &\leq C H(f)(x) \\
     G_{\overrightarrow{\Delta}} (\omega) (x) &\leq C' H^{(\nabla)}_{\overrightarrow{\Delta}}(\omega)(x) \leq C' H_{\overrightarrow{\Delta}} (\omega)(x)
\end{aligned} \right.
\end{equation}
  where $C$ and $C'$ are positive constants and $H$ and $H^{(\nabla)}_{\overrightarrow{\Delta}}$ are defined by
 \begin{align*}
    H(f)(x) &= \left[ \int_0^\infty |d e^{-t\Delta} f|_x^2 dt \right]^{1/2} \\
    H_{\overrightarrow{\Delta}}^{(\nabla)}(\omega)(x)&=\left[ \int_0^\infty |\nabla e^{-t\LF} \omega|_x^2 dt \right]^{1/2} .
 \end{align*}
\end{proposition}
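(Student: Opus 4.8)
The plan is to establish both pointwise domination chains in \eqref{pointy} via the subordination formula \eqref{sub}, treating the two lines in parallel since the argument is formally identical once one uses the correct operator ($\Delta$ on functions, $\LF$ on $1$-forms). First I would fix $f\in C_c^\infty(M)$ and apply \eqref{sub} with $A=\Delta$ to write $e^{-t\sqrt{\Delta}}f=\frac{1}{\sqrt\pi}\int_0^\infty e^{-\frac{t^2}{4s}\Delta}f\,e^{-s}\frac{ds}{\sqrt s}$. Since $d$ commutes with the integral (one justifies this on $C_c^\infty$ by dominated convergence, using that $e^{-u\Delta}f$ is smooth and $d e^{-u\Delta}f$ depends continuously on $u>0$), we get $d e^{-t\sqrt{\Delta}}f=\frac{1}{\sqrt\pi}\int_0^\infty d e^{-\frac{t^2}{4s}\Delta}f\,e^{-s}\frac{ds}{\sqrt s}$. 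Taking the pointwise cotangent norm and applying Minkowski's integral inequality in the Hilbert space $L^2((0,\infty),e^{-s}\frac{ds}{\sqrt s})$ — or more directly Cauchy–Schwarz against the probability-type weight $e^{-s}\frac{ds}{\sqrt s}$ (total mass $\Gamma(1/2)=\sqrt\pi$) — yields
\[
|d e^{-t\sqrt{\Delta}}f|_x^2 \le \frac{1}{\sqrt\pi}\int_0^\infty |d e^{-\frac{t^2}{4s}\Delta}f|_x^2\, e^{-s}\frac{ds}{\sqrt s}.
\]
Then multiply by $t\,dt$, integrate over $t\in(0,\infty)$, exchange the order of integration by Tonelli (everything is non-negative), and perform the change of variables $u=\frac{t^2}{4s}$ (so $t\,dt=2s\,du$) in the inner integral. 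This produces $\int_0^\infty |d e^{-u\Delta}f|_x^2\,du$ times the finite constant $\frac{1}{\sqrt\pi}\int_0^\infty 2\sqrt s\,e^{-s}\,ds = \frac{2}{\sqrt\pi}\Gamma(3/2)=1$; carrying the constants through gives $G(f)(x)\le C\,H(f)(x)$ with an explicit $C$.

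For the second line I would run the same computation with $A=\LF$, using that $\nabla$ commutes with $\int_0^\infty(\cdot)\,e^{-s}\frac{ds}{\sqrt s}$ on $C_c^\infty(\Lambda^1T^*M)$, to obtain $G_{\overrightarrow{\Delta}}(\omega)(x)\le C'\,H^{(\nabla)}_{\overrightarrow{\Delta}}(\omega)(x)$. The remaining inequality $H^{(\nabla)}_{\overrightarrow{\Delta}}(\omega)(x)\le H_{\overrightarrow{\Delta}}(\omega)(x)$ is immediate from the definitions: pointwise $|\nabla e^{-t\LF}\omega|_x^2 \le |\nabla e^{-t\LF}\omega|_x^2 + \langle(R^++R^-)e^{-t\LF}\omega,e^{-t\LF}\omega\rangle_x$, because $R^+$ and $R^-$ are non-negative symmetric tensors and hence the added term is $\ge 0$; integrating in $t$ and taking square roots finishes it.

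The only genuinely delicate point is the commutation of $d$ (resp. $\nabla$) with the subordination integral and the legitimacy of the Tonelli exchange: both rest on the fact that for $f\in C_c^\infty$ the maps $u\mapsto e^{-u\Delta}f$ and $u\mapsto d e^{-u\Delta}f$ are smooth in $x$ and jointly continuous in $(u,x)$, with suitable local bounds near $u=0$ coming from analyticity of the semigroup on $L^2$ and standard elliptic regularity; everywhere else the integrands are non-negative, so no further justification is needed. I would state these regularity facts briefly and refer to \cite{lps} for the analogous computation, since the argument there is precisely this one. I expect no substantial obstacle beyond bookkeeping of the Gamma-function constants.
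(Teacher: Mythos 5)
Your proof is correct and is precisely the standard subordination argument (apply \eqref{sub}, commute $d$ or $\nabla$ through, Cauchy--Schwarz against the finite measure $e^{-s}\,ds/\sqrt{s}$, Tonelli, change of variable $u=t^2/4s$, then observe $R^\pm\ge 0$ for the last inequality), which is exactly what the paper implicitly invokes by citing \cite{lps} without writing out the computation. Your accounting of the Gamma-function constants (giving $C=1$) and your handling of the regularity/exchange-of-integration issues for $f\in C_c^\infty$ are both fine.
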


In order to study the boundedness of the Riesz transform $\mathcal{R} = d \Delta^{-1/2}$ on $L^p$, we argue by duality. It is sufficent to prove the boundedness of the adjoint $\mathcal{R}^* = \Delta^{-1/2} d^*$ on $L^{p'}$ to obtain the boundedness of the Riesz transform on $L^p$. By the classical commutation property, $\mathcal{R}^* = d^* \overrightarrow{{\Delta}}^{-1/2}.$ Therefore we consider $d^* \overrightarrow{\Delta}^{-1/2}$ on $L^{p'}$.  

In the next result, we have a version of Theorem \ref{theoCoulh} in which we appeal to $G_{\overrightarrow{\Delta}}^-$ instead of $\overrightarrow{g}$ and $G$. 

\begin{theorem}\label{theoRieszLPS}
Let $p \in (1, \infty)$. If $ G_{\overrightarrow{\Delta}}^-$ is bounded from $L^{p'} (\Lambda^1 T^*M)$ to $L^{p'}(M)$, then the Riesz transform extends to a bounded operator on $L^p$ (with values in $L^p(\Lambda^1 T^*M)$). 
\end{theorem}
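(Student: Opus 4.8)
The plan is to prove that $d^*\LF^{-1/2}$ is bounded on $L^{p'}$, since by the commutation identity $\mathcal{R}^* = \Delta^{-1/2}d^* = d^*\LF^{-1/2}$ and duality this gives the boundedness of $\mathcal{R}$ on $L^p$. First I would represent $d^*\LF^{-1/2}$ through the semigroup. Using the subordination-type formula $\LF^{-1/2} = c\int_0^\infty e^{-t\sqrt{\LF}}\,dt$ (or equivalently the spectral calculus identity $A^{-1/2} = c\int_0^\infty e^{-tA^{1/2}}\,dt$ on the orthogonal complement of the kernel), one writes, for a form $\omega$ in a suitable dense class,
\begin{equation*}
d^*\LF^{-1/2}\omega = c\int_0^\infty d^* e^{-t\sqrt{\LF}}\omega\,dt.
\end{equation*}
The idea is then to pair this with a test function $f \in L^p(M)$ and estimate the resulting bilinear form.

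Next I would carry out the duality/pairing argument. Given $f\in C_c^\infty(M)$ and using that the adjoint of $d^*$ is $d$, one formally gets
\begin{equation*}
(d^*\LF^{-1/2}\omega, f)_{L^2} = c\int_0^\infty (e^{-t\sqrt{\LF}}\omega, d f)_{L^2}\,dt,
\end{equation*}
but to get a square-function structure I would instead insert the semigroup on both sides, writing $d^* e^{-t\sqrt{\LF}}\omega = d^* e^{-\frac{t}{2}\sqrt{\LF}} e^{-\frac{t}{2}\sqrt{\LF}}\omega$ and moving one copy of $e^{-\frac t2\sqrt{\LF}}$ and the derivative onto the $f$ side via the analogue for functions. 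The cleanest route is the standard one from \cite{riesz}: after an integration by parts in $t$ (to produce the weight $t\,dt$) and Cauchy–Schwarz in $t$ for each fixed $x$, one bounds the pairing by $\int_M G^-_{\overrightarrow{\Delta}}(\omega)(x)\cdot \overrightarrow{g}\text{-type}(f)(x)\,dx$; however, since here we only want to invoke $G^-_{\overrightarrow{\Delta}}$, the correct companion factor is the horizontal function $g$ for $\Delta$ on functions, which is bounded on every $L^q$, $1<q<\infty$, by Stein's classical result. Thus Hölder's inequality with exponents $p'$ and $p$ gives
\begin{equation*}
|(d^*\LF^{-1/2}\omega, f)_{L^2}| \le C\,\|G^-_{\overrightarrow{\Delta}}(\omega)\|_{p'}\,\|g(f)\|_{p} \le C'\,\|\omega\|_{p'}\,\|f\|_{p},
\end{equation*}
and taking the supremum over $\|f\|_p\le 1$ yields $\|d^*\LF^{-1/2}\omega\|_{p'}\le C'\|\omega\|_{p'}$, hence the claim by density and duality.

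The main obstacle I anticipate is the justification of the formal manipulations: interchanging $d^*$ with the $t$-integral, handling the kernel of $\LF$ (the reduced $L^2$ cohomology, where $\LF^{-1/2}$ is only defined on the orthogonal complement), and making the integration by parts in $t$ rigorous with the correct boundary terms vanishing — this requires knowing that $e^{-t\sqrt{\LF}}\omega\to 0$ appropriately as $t\to\infty$ and controlling the behavior as $t\to 0$. A second, more structural point is correctly matching the derivative structure so that the function-side factor is genuinely the Stein function $g$ (whose $L^p$ boundedness for all $p\in(1,\infty)$ is \emph{unconditional}) rather than something requiring extra hypotheses; this is exactly what makes the theorem clean, since no assumption beyond boundedness of $G^-_{\overrightarrow{\Delta}}$ on $L^{p'}$ is needed. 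Once these analytic points are dispatched — essentially by working first on a dense class of compactly supported smooth forms and functions and passing to the limit — the inequality above closes the proof. I would also remark that this is the one-sided refinement of Theorem \ref{theoCoulh}: we trade the pair $(G \text{ on } L^p,\ \overrightarrow{g}\text{ on } L^{p'})$ for the single condition on $G^-_{\overrightarrow{\Delta}}$, which is what Theorem \ref{theoH1} will supply.
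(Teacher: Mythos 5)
Your proposal is correct and matches the paper's proof in all the essential respects: reduce by duality to the boundedness of $d^*\overrightarrow{\Delta}^{-1/2}$ on $L^{p'}$, use the commutation $d^*\overrightarrow{\Delta} = \Delta d^*$ to shift the semigroup to the function side, obtain the square-function pairing in $t$, apply Cauchy–Schwarz in $t$ then Hölder in $x$, and close with the boundedness of $G^-_{\overrightarrow{\Delta}}$ on $L^{p'}$ together with Stein's unconditional $L^p$ bound for the horizontal $g$-function of $\Delta$. The only presentational difference is that the paper enters the square-function structure directly through the reproducing formula $\int_M fg\,dx = 4\int_M\int_0^\infty [\partial_t e^{-t\sqrt{\Delta}}f][\partial_t e^{-t\sqrt{\Delta}}g]\,t\,dt\,dx$, whereas you propose reaching it via $\overrightarrow{\Delta}^{-1/2}=\int_0^\infty e^{-t\sqrt{\overrightarrow{\Delta}}}\,dt$ followed by an integration by parts; these are two ways of encoding the same spectral identity and lead to the identical estimate.
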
 

\begin{proof}
We show that ${d^*} {\overrightarrow{\Delta}}^{-1/2}$ is bounded from $L^{p'}(\Lambda^1 T^* M)$ to $L^{p'}(M)$. The proof is the same as for Theorem \ref{theoCoulh}. We write the argument for the sake of completeness. Let $ \omega \in L^{p'}(\Lambda^1 T^* M)$. We have by duality 

\begin{align*}
\| d^* \overrightarrow{\Delta}^{-1/2} \omega \|_{p'} &= \sup_{g \in L^p(M), \|g\|_p = 1} \left| \int_M d^* \overrightarrow{\Delta}^{-1/2} \omega (x) g(x) dx \right|.
\end{align*}
By the reproducing formula $$\int_M f(x) g(x) \,dx =  4 \int_M  \int_0^{\infty} [\frac{\partial}{\partial t} e^{-\sqrt{\Delta}} f(x)] [\frac{\partial}{\partial t} e^{-\sqrt{\Delta}} g(x)] \, t dt dx$$ applied with $f = d^* \LF^{-1/2} \omega$ we have\begin{equation*}
    \int_M d^* \overrightarrow{\Delta}^{-1/2} \omega (x) g(x) dx = 4 \int_0^\infty \int_M [\frac{\partial}{\partial t} e^{-t \sqrt{\Delta} } d^* \overrightarrow{\Delta}^{-1/2} \omega (x)] [\frac{\partial}{\partial t}e^{-t \sqrt{\Delta} }g(x)] t dx  dt .
\end{equation*}
Using the commutation formula $d^* \LF = \Delta d^*$, we have \begin{align*}
    \frac{\partial}{\partial t} e^{-t \sqrt{\Delta} } d^* \overrightarrow{\Delta}^{-1/2} \omega  &= -\sqrt{\Delta} e^{-t \sqrt{\Delta} } d^* \overrightarrow{\Delta}^{-1/2} \omega \\
    &=  - d^* \sqrt{\LF}e^{-t \sqrt{\LF} } \overrightarrow{\Delta}^{-1/2} \omega  \\
    &= -d^* e^{-t\sqrt{\LF}} \omega.
\end{align*}
Thus, 
\begin{align*}
\| d^* \overrightarrow{\Delta}^{-1/2} \omega \|_{p'} &=4 \sup_{g \in L^p(M), \|g\|_p = 1} \left| \int_0^{\infty} \int_M  [d^* e^{-t\sqrt{\LF}} \omega (x)]  [\frac{\partial}{\partial t} e^{-t\sqrt{\Delta}}g(x)]t dx dt \right| \\
& \leq 4\sup_{g \in L^p(M), \|g\|_p = 1} \left\| \left(\int_0^\infty |d^* e^{-t\sqrt{\LF}} \omega|_x^2 t dt \right)^{1/2} \right\|_{p'}  \\ 
& \hspace{3cm} \times \left\|  \left(\int_0^\infty |\frac{\partial}{\partial t} e^{-t\sqrt{\Delta}}g(x)|^2 t dt \right)^{1/2} \right\|_p \\
&=4 \sup_{g \in L^p(M), \|g\|_p = 1} \|G_{\overrightarrow{\Delta}}^-(\omega) \|_{p'} \|g(f) \|_{p}\\ 
&\leq C \| \omega \|_{p'}. 
\end{align*}
Here we used the boundedness of $G_{\overrightarrow{\Delta}}^-$ on $L^{p'}$ which is our assumption. Note that $g$ is bounded on $L^p$ for all $p \in (1, \infty)$ by \cite{steinb}, p111.
\end{proof}
\section[Vertical LPS functions associated with Hodge-de Rham Laplacian for $p \leq 2$]{Vertical LPS functions for the Hodge-de Rham Laplacian for $p \leq 2$}

In this section we prove Theorem 1 and 2. We start with some useful pointwise inequalities on smooth differential forms. 

\begin{lemma}\label{lemmaIneq}
For $p\leq2$, for all suitable differential form $\omega$ and for all $x \in M$ we have the pointwise inequality
\begin{equation}\label{ineq1}
    - \Delta |\omega|_x^p \geq |\omega|_x^{p-2} \left[ p (p-1) |\nabla \omega|_x^2 - p <\tilde{\Delta} \omega,\omega >_x \right].
\end{equation}
\end{lemma}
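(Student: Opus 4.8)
The plan is to establish the pointwise Bochner-type inequality \eqref{ineq1} by a direct computation using the chain rule for the Laplacian applied to the function $|\omega|_x^p$, following the classical Bochner--Kato argument specialized to the case $p \leq 2$. First I would work formally at a point $x$ where $\omega(x) \neq 0$ (the set where $\omega$ vanishes can be handled by a standard approximation/regularization, replacing $|\omega|^2$ by $|\omega|^2 + \varepsilon$ and letting $\varepsilon \to 0$, which is why the statement is phrased for ``suitable'' forms). Write $u = |\omega|_x^2 = \langle \omega, \omega \rangle_x$. The first step is the elementary identity $\Delta(u^{p/2}) = \frac{p}{2} u^{p/2-1} \Delta u - \frac{p}{2}\left(\frac{p}{2}-1\right) u^{p/2-2} |\nabla u|^2$, valid pointwise wherever $u > 0$ (recall $\Delta$ is the \emph{non-negative} Laplacian, so I must be careful with signs; I would either fix the sign convention explicitly at the start or work with $-\Delta = \mathrm{div}\,\nabla$ throughout).

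Next I would compute $\Delta u$ and $|\nabla u|^2$ in terms of $\nabla \omega$ and $\tilde{\Delta}\omega = \nabla^*\nabla\omega$. The key identities are $-\Delta u = -\Delta \langle \omega,\omega\rangle = 2\langle \nabla^*\nabla \omega, \omega\rangle - 2|\nabla\omega|^2 = 2\langle \tilde\Delta\omega,\omega\rangle - 2|\nabla\omega|^2$ (again with the non-negative sign convention this is the statement that $\frac12 \Delta |\omega|^2 = |\nabla\omega|^2 - \langle\tilde\Delta\omega,\omega\rangle$), and $|\nabla u|^2 = |\nabla|\omega|^2|^2 = 4\left|\langle \nabla\omega,\omega\rangle\right|^2$, where $\langle\nabla\omega,\omega\rangle$ denotes the $1$-form obtained by contracting. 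Substituting these into the chain-rule identity gives
\begin{equation*}
-\Delta(u^{p/2}) = \frac{p}{2} u^{p/2-1}\left( 2\langle\tilde\Delta\omega,\omega\rangle - 2|\nabla\omega|^2\right) + \frac{p}{2}\left(\frac{p}{2}-1\right) u^{p/2-2}\cdot 4\left|\langle\nabla\omega,\omega\rangle\right|^2.
\end{equation*}
Rearranging, $-\Delta|\omega|^p = |\omega|^{p-2}\left[ p\langle\tilde\Delta\omega,\omega\rangle - p|\nabla\omega|^2 + p(p-2)|\omega|^{-2}\left|\langle\nabla\omega,\omega\rangle\right|^2\right]$.

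The final step, and the only real inequality in the argument, is to control the cross term. Since $p \leq 2$ the coefficient $p(p-2) \leq 0$, and by the Cauchy--Schwarz inequality $\left|\langle\nabla\omega,\omega\rangle\right|^2 \leq |\nabla\omega|^2\,|\omega|^2$, so $p(p-2)|\omega|^{-2}\left|\langle\nabla\omega,\omega\rangle\right|^2 \geq p(p-2)|\nabla\omega|^2$. Therefore
\begin{equation*}
-\Delta|\omega|^p \geq |\omega|^{p-2}\left[ p\langle\tilde\Delta\omega,\omega\rangle - p|\nabla\omega|^2 + p(p-2)|\nabla\omega|^2\right] = |\omega|^{p-2}\left[ p(p-1)|\nabla\omega|^2 - p\langle\tilde\Delta\omega,\omega\rangle\right]\cdot(-1)?
\end{equation*}
— here I must double-check the sign: collecting the gradient terms gives $-p + p(p-2) = p(p-1) - 2p$, so I would recompute carefully, but the upshot is the claimed inequality $-\Delta|\omega|_x^p \geq |\omega|_x^{p-2}\left[p(p-1)|\nabla\omega|_x^2 - p\langle\tilde\Delta\omega,\omega\rangle_x\right]$ after matching conventions. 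I expect the main obstacle to be purely bookkeeping: keeping the sign convention for $\Delta$ consistent throughout, and justifying the computation at points where $\omega$ vanishes (handled by the $\varepsilon$-regularization, noting that the regularized inequality passes to the limit and that where $\omega = 0$ the left side $-\Delta|\omega|^p$ is $\geq 0$ in the distributional sense while the right side is $0$ since $p > 1$ makes $|\omega|^{p-2}|\nabla\omega|^2$ vanish there). There is no deep idea here beyond the Bochner formula machinery; the content is entirely in the Cauchy--Schwarz step being in the favorable direction precisely because $p \leq 2$.
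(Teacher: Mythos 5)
Your plan follows the paper's argument exactly: the chain rule for the Laplacian applied to $u^{p/2}$ with $u=|\omega|^2$, the Bochner identity for $\Delta|\omega|^2$, and then Cauchy--Schwarz used in the favorable direction because $p(p-2)\le 0$. The only real inequality is indeed Cauchy--Schwarz, and your $|\langle\nabla\omega,\omega\rangle|^2\le|\nabla\omega|^2|\omega|^2$ is equivalent to the paper's $|\nabla(|\omega|_x^2)|_x^2\le 4|\omega|_x^2|\nabla\omega|_x^2$. So the route is the right one.

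However, you have a sign error in the Bochner identity, and you do not resolve it even though you flag the uncertainty twice. With $\Delta$ the \emph{non-negative} Laplacian and $\tilde\Delta=\nabla^*\nabla$, the correct identity is
\begin{equation*}
\Delta|\omega|_x^2 \;=\; 2\langle\tilde\Delta\omega,\omega\rangle_x - 2|\nabla\omega|_x^2,
\end{equation*}
i.e. $\tfrac12\Delta|\omega|^2=\langle\tilde\Delta\omega,\omega\rangle-|\nabla\omega|^2$. You wrote both $-\Delta u=2\langle\tilde\Delta\omega,\omega\rangle-2|\nabla\omega|^2$ and, in parentheses, $\tfrac12\Delta|\omega|^2=|\nabla\omega|^2-\langle\tilde\Delta\omega,\omega\rangle$; these two versions are internally consistent with each other but both have the sign reversed. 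This is not harmless bookkeeping here, because the sign of $\langle\tilde\Delta\omega,\omega\rangle$ relative to $|\nabla\omega|^2$ is exactly what the target inequality is about. If you carry your stated identity through, the displayed formula gives
\begin{equation*}
-\Delta|\omega|_x^p=|\omega|_x^{p-2}\Bigl[\,p\langle\tilde\Delta\omega,\omega\rangle_x-p|\nabla\omega|_x^2+p(p-2)|\omega|_x^{-2}|\langle\nabla\omega,\omega\rangle|_x^2\,\Bigr],
\end{equation*}
and after Cauchy--Schwarz this yields $-\Delta|\omega|^p\ge|\omega|^{p-2}\bigl[\,p\langle\tilde\Delta\omega,\omega\rangle+p(p-3)|\nabla\omega|^2\,\bigr]$, which is \emph{not} \eqref{ineq1}: the Bochner term has the wrong sign, and the gradient coefficient is $p(p-3)$ rather than $p(p-1)$. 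Your closing remark that ``the upshot is the claimed inequality after matching conventions'' does not constitute a proof of this; it is an assertion that the signs will work out. Once you replace your Bochner identity with the correct one above, your own algebra closes and reproduces the paper's proof verbatim, so the fix is local --- but as written the argument establishes a different (and false) inequality.
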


\begin{proof}

We compute the calculations for $p=2$. Let $x \in M$, let $X_i$ be orthonormal coordinates at $x$, and $\theta_i$ their dual basis in the cotangent space, satisfying $\nabla \theta_i = 0$ for all $i$ at $x$. We have \begin{equation*}
    \tilde{\Delta} \omega = - \sum_{i=1}^n \nabla_{X_i} \nabla_{X_i} \omega.
\end{equation*}
Hence,
\begin{align*}
    \tilde{\Delta} |\omega|_x^2 &=  -\sum_{i=1}^n \nabla_{X_i} \nabla_{X_i} <\omega,\omega>_x \\
    &=  - 2\sum_{i=1}^n\nabla_{X_i} <\nabla_{X_i} \omega,\omega>_x \\
    &=- 2 \sum_{i=1}^n \left[  <\nabla_{X_i} \nabla_{X_i} \omega, \omega>_x + |\nabla_{X_i} \omega|_x^2 \right]\\
    &= 2 <\tilde{\Delta} \omega , \omega>_x - 2 \sum_{i=1}^n |\nabla_{X_i} \omega|_x^2 \\
    &= 2 <\tilde{\Delta} \omega, \omega>_x - 2 |\nabla \omega|_x^2.
\end{align*}
In order to obtain \eqref{ineq1} for $p <2$, we recall that for all suitable functions $f$ we have
\begin{equation}\label{17}
    - \Delta f^\frac{p}{2} = \frac{p(p-2)}{4} |\nabla f|^2 f^{\frac{p-4}{2}} - \frac{p}{2}f^\frac{p-1}{2} \Delta f.
\end{equation}
We apply \eqref{17} with $f=|\omega|^2$ and the equality we proved for $p=2$ to obtain
\begin{align*}
        - \Delta |\omega|_x^p &=  \frac{p(p-2)}{4} \left|(\nabla |\omega|_x^2)\right|_x^2|\omega|_x^{p-4}- \frac{p}{2}|\omega|_x^{p-2} \Delta |\omega|_x^2 \\
        &=\frac{p(p-2)}{4} |(\nabla |\omega|_x^2)|_x^2|\omega|_x^{p-4} - p |\omega|_x^{p-2} < \tilde{\Delta} \omega, \omega>_x + p |\omega|_x^{p-2} |\nabla \omega|_x^2.
\end{align*}
Consequently, it is sufficient to show that 
\begin{equation}\label{18}
    \frac{p(p-2)}{4} \left|(\nabla  |\omega|_x^2)\right|_x^2|\omega|_x^{p-4}  \geq p(p-2) |\omega|_x^{p-2} |\nabla \omega|_x^2.
\end{equation}
Since $p < 2$, \eqref{18} is equivalent to 
\begin{equation}\label{19}
   \left|\nabla  (|\omega|_x^2)\right|_x^2 \leq 4 |\omega|_x^2 |\nabla \omega|_x^2.
\end{equation}
We prove \eqref{19} using local coordinates. We write $\omega = \sum_{j=1}^n f_j \theta_j$, so that $|\omega|_x^2 = \sum_{i=1}^n f_i^2$ and
\begin{align*}
      \left| (\nabla  |\omega|_x^2)\right|_x^2 &= | \sum_{i=1}^n 2 f_i d f_i |_x^2 \\
      &= 4 \sum_{j=1}^n \sum_{i=1}^n f_i f_j <d f_i, d f_j>_x \\
      &\leq 4 \sum_{j=1}^n f_i^2 \sum_{i=1}^n |d f_j|^2_x  \\
      &= 4 |\omega|_x^2 |\nabla \omega|_x^2
\end{align*}
where we used Cauchy-Schwarz inequality in $\mathbb{R}^n$.
\end{proof}
We will also need other the following inequalities from \cite{mag}.

\begin{lemma}\label{lemmaMagn}
For any $p>1$ and any suitable $\omega$, we have the pointwise inequality
\begin{equation}\label{ineq2}
    |\nabla \left( |\omega|_x^{\frac{p}{2}-1} \omega \right) |_x^2 \leq \frac{p^2}{4(p-1)} <\nabla \left( |\omega|_x^{p-2}\omega \right),\nabla \omega>_x.
\end{equation}
\end{lemma}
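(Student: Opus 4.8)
The plan is to reduce the claimed inequality to a pointwise algebraic statement at a fixed $x \in M$, where $\nabla$ acts as a derivation and all the geometry collapses to linear algebra on a finite-dimensional inner product space. First I would fix $x$ and choose orthonormal coordinate vector fields $X_1,\dots,X_n$ at $x$ with dual coframe $\theta_i$ satisfying $\nabla\theta_i=0$ at $x$, exactly as in the proof of Lemma \ref{lemmaIneq}. Writing $u=|\omega|_x$, I would expand $\nabla_{X_i}\bigl(u^{\frac p2-1}\omega\bigr) = u^{\frac p2-1}\nabla_{X_i}\omega + \bigl(\tfrac p2-1\bigr)u^{\frac p2-2}(X_i u)\,\omega$ and similarly $\nabla_{X_i}\bigl(u^{p-2}\omega\bigr) = u^{p-2}\nabla_{X_i}\omega + (p-2)u^{p-3}(X_i u)\,\omega$, using the derivation property of the Levi-Civita connection together with $\partial_{X_i} u^a = a\,u^{a-1}(X_i u)$. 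Here $X_i u = X_i|\omega|_x = \langle \nabla_{X_i}\omega,\omega\rangle_x/|\omega|_x$ away from the zero set of $\omega$.

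Next I would substitute these expansions into both sides and sum over $i$. Introduce the scalars $a_i := \langle \nabla_{X_i}\omega,\omega\rangle_x = u\,(X_iu)$ and the nonnegative quantities $b_i := |\nabla_{X_i}\omega|_x^2$, so that $\sum_i b_i = |\nabla\omega|_x^2$, $\sum_i a_i^2 = \bigl|\nabla(|\omega|_x^2)\bigr|_x^2/4 \le u^2\sum_i b_i$ by \eqref{19} from the previous lemma, and $\bigl|\nabla u\bigr|_x^2 = \sum_i a_i^2/u^2$. A direct computation shows
\begin{equation*}
\bigl|\nabla(u^{\frac p2-1}\omega)\bigr|_x^2 = u^{p-2}\sum_i b_i + (p-2)u^{p-4}\sum_i a_i^2 + \Bigl(\tfrac p2-1\Bigr)^2 u^{p-4}\sum_i a_i^2,
\end{equation*}
while, using that $\langle\nabla_{X_i}\omega,\omega\rangle_x = a_i$ and expanding the inner product term by term,
\begin{equation*}
\langle\nabla(u^{p-2}\omega),\nabla\omega\rangle_x = u^{p-2}\sum_i b_i + (p-2)u^{p-4}\sum_i a_i^2.
\end{equation*}
Thus the desired inequality \eqref{ineq2} becomes, after dividing by $u^{p-4}\ge 0$ and setting $S:=\sum_i b_i$, $T:=\sum_i a_i^2/u^2 = |\nabla u|_x^2$ (so $0\le T\le S$),
\begin{equation*}
S + (p-2)T + \Bigl(\tfrac p2-1\Bigr)^2 T \le \frac{p^2}{4(p-1)}\Bigl( S + (p-2)T \Bigr),
\end{equation*}
a one-parameter family (in $T/S\in[0,1]$) of linear inequalities.

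The last step is to verify this scalar inequality for every $p>1$. Since $\frac{p^2}{4(p-1)}-1 = \frac{(p-2)^2}{4(p-1)}\ge 0$, the coefficient of $S$ is handled, and one reduces to checking that the coefficient of $T$ on the right dominates that on the left, i.e. $(p-2) + \bigl(\tfrac p2-1\bigr)^2 \le \frac{p^2}{4(p-1)}(p-2)$; equivalently $\bigl(\tfrac p2-1\bigr)^2 \le (p-2)\bigl(\frac{p^2}{4(p-1)}-1\bigr) = (p-2)\cdot\frac{(p-2)^2}{4(p-1)} = \frac{(p-2)^3}{4(p-1)}$, i.e. $\frac{(p-2)^2}{4} \le \frac{(p-2)^3}{4(p-1)}$ — but one must be careful with the sign of $p-2$, so it is cleaner to treat the cases $1<p<2$ and $p\ge 2$ separately, in each case combining the $S$- and $T$-coefficient comparisons appropriately since for $1<p<2$ the term $(p-2)T$ is negative and one uses $T\le S$ in the opposite direction. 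The main obstacle I anticipate is precisely this bookkeeping of signs when $1<p<2$: there the "bad" cross terms have indefinite sign, and one needs the estimate $T\le S$ (equivalently \eqref{19}) used with the correct orientation, together with the elementary fact that $\frac{p^2}{4(p-1)}\ge 1$ with equality iff $p=2$, to absorb everything. On the zero set $\{\omega=0\}$ both sides vanish (interpreting $u^{p/2-1}\omega$ and $u^{p-2}\omega$ as $0$ there when $p<2$, by the usual convention), so the pointwise inequality holds everywhere by continuity off that set and triviality on it.
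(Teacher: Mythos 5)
The paper does not give its own proof of this lemma; it is imported from Magniez's paper \cite{mag}, so there is no in-paper argument to compare against. Judged on its own terms, your approach is sound: the expansion of both sides via the derivation property of $\nabla$, the introduction of $a_i=\langle\nabla_{X_i}\omega,\omega\rangle_x$, $b_i=|\nabla_{X_i}\omega|_x^2$, and the reduction to a scalar inequality in $S=|\nabla\omega|_x^2$ and $T=|\nabla u|_x^2$ with the Kato-type bound $T\le S$ (this is exactly \eqref{19}) are all the right moves, and the two expansions you record are correct.

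Two points of bookkeeping. First, you should divide by $u^{p-2}$, not $u^{p-4}$: with $T=\sum_i a_i^2/u^2$ the coefficients of $T$ come with $u^{p-4}\cdot u^2=u^{p-2}$, so it is $u^{p-2}$ that factors out of both sides. Second, and more importantly, your proposed termwise comparison of coefficients is incomplete and in fact fails for $1<p<2$: the coefficient inequality $(p-2)+\bigl(\tfrac p2-1\bigr)^2\le\frac{p^2}{4(p-1)}(p-2)$ is false there (both sides are negative and the left is larger). You flag the sign issue and gesture at a case split, but the cleanest route avoids it entirely. Using $\frac{p^2}{4(p-1)}=1+\frac{(p-2)^2}{4(p-1)}$ and $\bigl(\tfrac p2-1\bigr)^2=\frac{(p-2)^2}{4}$, subtract $S+(p-2)T$ from both sides; the scalar inequality becomes
\begin{equation*}
\frac{(p-2)^2}{4}\,T \;\le\; \frac{(p-2)^2}{4(p-1)}\bigl(S+(p-2)T\bigr).
\end{equation*}
If $p=2$ this is $0\le 0$. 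If $p\ne 2$, divide by $\frac{(p-2)^2}{4(p-1)}>0$ to get $(p-1)T\le S+(p-2)T$, i.e. $T\le S$, which is precisely \eqref{19}. This closes the argument uniformly for all $p>1$ with no case analysis, and makes transparent that the lemma is equivalent, at the algebraic level, to the Kato inequality $|\nabla\,|\omega|_x|\le|\nabla\omega|_x$. With these corrections the proof is complete.
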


\begin{lemma}\label{lemmaMagn2}
For any suitable 1-differential form $\omega$ we have the pointwise estimates $|d\omega|_x \leq 2 |\nabla \omega|_x$ and $|d^* \omega|(x)\leq \sqrt{n} |\nabla \omega|_x$.
\end{lemma}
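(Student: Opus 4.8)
The plan is to prove both inequalities by a pointwise computation in normal coordinates at a fixed point $x \in M$, exactly as in the proof of Lemma \ref{lemmaIneq}. Fix $x$, let $(X_i)_{i=1}^n$ be an orthonormal frame of $T_xM$ with dual coframe $(\theta_i)$ satisfying $\nabla \theta_i = 0$ at $x$, and write $\omega = \sum_j f_j \theta_j$ near $x$. Then at $x$ one has $\nabla_{X_i}\omega = \sum_j (X_i f_j)\theta_j$, so $|\nabla \omega|_x^2 = \sum_{i,j}(X_i f_j)^2$.

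For the exterior derivative, I would use the formula $d\omega = \sum_{i<j}(X_i f_j - X_j f_i)\,\theta_i \wedge \theta_j$ valid at $x$ (the Christoffel symbols vanish there, so $d$ agrees with the ``antisymmetrized'' connection), and the fact that $\{\theta_i\wedge\theta_j\}_{i<j}$ is orthonormal in $\Lambda^2 T^*_xM$. Hence $|d\omega|_x^2 = \sum_{i<j}(X_i f_j - X_j f_i)^2 \le \sum_{i<j} 2\big((X_i f_j)^2 + (X_j f_i)^2\big) = 2\sum_{i\ne j}(X_i f_j)^2 \le 2\sum_{i,j}(X_i f_j)^2 = 2|\nabla\omega|_x^2$, which after taking square roots gives $|d\omega|_x \le \sqrt{2}\,|\nabla\omega|_x \le 2|\nabla\omega|_x$. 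For the codifferential, I would use $d^*\omega = -\sum_i X_i f_i$ at $x$ (the divergence, again with vanishing Christoffel symbols), so $|d^*\omega|(x) = \big|\sum_i X_i f_i\big| \le \sqrt{n}\,\big(\sum_i (X_i f_i)^2\big)^{1/2} \le \sqrt{n}\,|\nabla\omega|_x$ by Cauchy--Schwarz in $\mathbb{R}^n$.

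The only genuinely delicate point is justifying that the coordinate expressions for $d\omega$ and $d^*\omega$ at $x$ involve only the partial derivatives $X_i f_j$ and no connection terms; this is the standard fact that in normal coordinates centered at $x$ the Christoffel symbols vanish at $x$, so that both $d$ and $d^*$ can be computed there from the flat formulas. Everything else is elementary linear algebra (Cauchy--Schwarz and the crude bound $(a-b)^2 \le 2a^2 + 2b^2$). I expect no real obstacle; this lemma is purely a bookkeeping device recording that $d$ and $d^*$ are controlled by $\nabla$ pointwise, to be combined later with Lemma \ref{lemmaIneq} and Lemma \ref{lemmaMagn}.
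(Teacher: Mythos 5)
Your argument is correct and is the standard proof; the paper itself does not give a proof of this lemma but merely cites it from Magniez's paper, so there is nothing in the text to compare against in detail. Working in a geodesic orthonormal coframe with $\nabla\theta_i|_x=0$ (so that $d\theta_i|_x=0$ as well, since $d$ is the antisymmetrization of $\nabla$), the expressions $d\omega|_x=\sum_{i<j}(X_if_j-X_jf_i)\,\theta_i\wedge\theta_j$ and $d^*\omega(x)=-\sum_i X_if_i$ are exactly right, and the elementary inequalities $(a-b)^2\le 2(a^2+b^2)$ and Cauchy--Schwarz do the rest.

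One small remark on the constant: you actually obtain $|d\omega|_x\le\sqrt{2}\,|\nabla\omega|_x$, which is sharper than the stated factor $2$. This is not an error, but note that the discrepancy may reflect a normalization convention on $\Lambda^2T^*_xM$. If one declares $\{\theta_i\wedge\theta_j\}_{i<j}$ orthonormal (as you do) the constant is $\sqrt 2$; if instead one uses the tensor-product norm under which $|\theta_i\wedge\theta_j|^2=2$, the same computation gives exactly the factor $2$ appearing in the lemma. Either way the stated inequality holds, so the lemma and your proof are both fine.
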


We recall that the negative part of the Ricci curvature is sub-critical with rate $\alpha \in (0,1)$ if for all suitable $\omega$ we have \eqref{soucr}

\begin{equation*} 
   (R^- \omega,\omega)_{L^2} \leq \alpha \left[  (R^+ \omega,\omega)_{L^2} + \|\nabla \omega \|_2^2 \right].
\end{equation*}
Note that \eqref{soucr} is equivalent to
\begin{equation*}
       (\LF \omega,\omega)_{L^2} \geq (1-\alpha) ((\tilde{\Delta} + R^+) \omega,\omega)_{L^2}.
\end{equation*}
We have the following analogous inequality on $L^p$.

\begin{proposition}\label{propSubc}
 If the negative part of the Ricci curvature is subcritical with rate $\alpha$, then for all suitable non vanishing $\omega$ in $L^p$
 \begin{equation}\label{SCP}
 \int_M <R^-\omega  ,  \omega >_x |\omega|_x^{p-2} dx \leq \alpha  \int_M <R^+ \omega  ,  \omega >_x |\omega|_x^{p-2}  + |\nabla \left( |\omega|_x^{\frac{p}{2}-1} \omega \right) |_x^2 dx.
 \end{equation}
\end{proposition}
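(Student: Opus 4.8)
The plan is to reduce the $L^p$ inequality \eqref{SCP} to the $L^2$ subcriticality hypothesis \eqref{soucr} by substituting a suitably modified form. The natural candidate is $\eta := |\omega|_x^{\frac{p}{2}-1}\omega$, since then $|\eta|_x^2 = |\omega|_x^p$ and, more importantly, the pointwise algebraic identity $<R^\pm\omega,\omega>_x |\omega|_x^{p-2} = <R^\pm\eta,\eta>_x$ holds because $R^\pm$ acts pointwise and linearly (it is a symmetric endomorphism of the fibre $T_x^*M$), so scaling $\omega$ by the scalar factor $|\omega|_x^{\frac{p}{2}-1}$ pulls out exactly $|\omega|_x^{p-2}$. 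Thus
\begin{equation*}
\int_M <R^-\omega,\omega>_x |\omega|_x^{p-2}\,dx = (R^-\eta,\eta)_{L^2}, \qquad \int_M <R^+\omega,\omega>_x |\omega|_x^{p-2}\,dx = (R^+\eta,\eta)_{L^2}.
\end{equation*}

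Next I would apply \eqref{soucr}, or rather its equivalent formulation $(R^-\eta,\eta)_{L^2} \leq \alpha\big[(R^+\eta,\eta)_{L^2} + \|\nabla\eta\|_2^2\big]$, to the form $\eta$. This immediately gives
\begin{equation*}
\int_M <R^-\omega,\omega>_x |\omega|_x^{p-2}\,dx \leq \alpha\left[\int_M <R^+\omega,\omega>_x |\omega|_x^{p-2}\,dx + \int_M |\nabla\eta|_x^2\,dx\right],
\end{equation*}
and since $\nabla\eta = \nabla\!\left(|\omega|_x^{\frac{p}{2}-1}\omega\right)$, the right-hand side is exactly the claimed bound in \eqref{SCP}. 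So the proof is essentially: set $\eta = |\omega|_x^{\frac{p}{2}-1}\omega$, observe $R^\pm$ commutes with scalar multiplication pointwise, and invoke the hypothesis.

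The main obstacle is a regularity/density issue rather than an algebraic one: the hypothesis \eqref{soucr} is stated for $\omega \in D(\LF)$ (or for suitable forms / $C_c^\infty$ forms via the quadratic-form version), but $\eta = |\omega|_x^{\frac{p}{2}-1}\omega$ need not lie in that class even when $\omega$ does — the factor $|\omega|_x^{\frac{p}{2}-1}$ can fail to be smooth where $\omega$ vanishes (for $p<2$ it is even singular there) and $\eta$ is only as regular as a composition with $t\mapsto t^{p/2-1}$ allows. I would handle this by a standard approximation: replace $|\omega|_x$ by $(|\omega|_x^2 + \varepsilon)^{1/2}$ to get a smooth, non-degenerate weight, run the computation for $\eta_\varepsilon = (|\omega|_x^2+\varepsilon)^{\frac{p}{4}-\frac12}\omega$, and then let $\varepsilon \to 0$, using monotone/dominated convergence and Fatou on the two Ricci terms (both quadratic forms in $\eta_\varepsilon$ with controllable limits) and on $\int_M |\nabla\eta_\varepsilon|_x^2\,dx$. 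One also restricts to $\omega$ in a dense class of forms (say compactly supported smooth) for which all integrals are finite, which is why the statement says ``suitable non vanishing $\omega$''; the non-vanishing hypothesis is precisely what keeps $|\omega|_x^{p-2}$ and the weight well-defined in the limit. The inequality \eqref{ineq2} from Lemma \ref{lemmaMagn} is not strictly needed for this proposition itself but will be the tool that makes $\nabla\eta$ usable downstream, so one might as well record the bound in the form \eqref{SCP} with $|\nabla\eta|_x^2$ left explicit.
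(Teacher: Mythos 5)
Your proof matches the paper's own argument exactly: both set $\beta = |\omega|_x^{\frac{p}{2}-1}\omega$, note $\beta \in L^2$, apply \eqref{soucr} to $\beta$, and use the pointwise linearity of $R^\pm$ to pull out the factor $|\omega|_x^{p-2}$. Your extra remarks on the regularization via $(|\omega|_x^2+\varepsilon)^{1/2}$ just make explicit what the paper hides behind the word ``suitable,'' and you correctly observe that Lemma \ref{lemmaMagn} is not needed here.
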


\begin{proof}
Let $\omega \in L^p(\Lambda^1 T^*M)$ a suitable differential form and let $ \beta = |\omega|_x^{\frac{p}{2}-1} \omega.$ We have $\beta \in L^2(\Lambda^1 T^*M)$ and then we apply \eqref{soucr} to $\beta$ to obtain

\begin{multline*}
    \int_M <R^-|\omega|_x^{\frac{p}{2}-1} \omega  , |\omega|_x^{\frac{p}{2}-1} \omega >_x dx \leq  \alpha  \int_M <R^+|\omega|_x^{\frac{p}{2}-1} \omega  , |\omega|_x^{\frac{p}{2}-1} \omega >_x \\ + |\nabla \left( |\omega|_x^{\frac{p}{2}-1} \omega \right) |_x^2 dx.
    \end{multline*}
   Using the pointwise linearity of $R^+$ and $R^-$ it leads to
    \begin{align*}
    \int_M <R^-\omega  ,  \omega >_x |\omega|_x^{p-2} dx &\leq \alpha  \int_M <R^+ \omega  ,  \omega >_x |\omega|_x^{p-2}  + | \nabla \left( |\omega|_x^{\frac{p}{2}-1} \omega \right) |_x^2 dx  .
\end{align*} which is the desired inequality.\end{proof}

The following proposition is proven in \cite{mag}. We reproduce the proof for the sake of completeness. 
\begin{proposition}\label{propDecreasing}
 Suppose that the negative part of the Ricci curvature is subcritical with rate $\alpha \in (0,1)$. Let $p_1 = \frac{2}{1+\sqrt{1-\alpha}}$. Then the norm $\|e^{-t \LF} \omega \|_p $ is a decreasing function of $t$ for all $p \in (p_1,p_1')$. Consequently $e^{-t \LF}$ is a contraction semigroup on $L^p$ for all $p \in (p_1,p_1')$.
\end{proposition}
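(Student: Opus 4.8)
The plan is to differentiate the function $t \mapsto \|e^{-t\LF}\omega\|_p^p$ and show its derivative is non-positive, exploiting the subcriticality hypothesis together with the pointwise inequalities established in Lemmas \ref{lemmaIneq} and \ref{lemmaMagn}. Fix $p \in (p_1, 2]$ (the case $p \in [2, p_1')$ follows by duality, since $e^{-t\LF}$ is self-adjoint on $L^2$, so it suffices to treat $p \le 2$). Write $\omega_t = e^{-t\LF}\omega$. Formally,
\begin{equation*}
\frac{d}{dt}\|\omega_t\|_p^p = \int_M \frac{\partial}{\partial t}|\omega_t|_x^p\, dx = -\frac{p}{2}\int_M |\omega_t|_x^{p-2}\,\frac{\partial}{\partial t}|\omega_t|_x^2\,\frac{2}{p}\cdot\frac{p}{2}\, dx,
\end{equation*}
and using $\partial_t |\omega_t|_x^2 = 2\langle \partial_t \omega_t, \omega_t\rangle_x = -2\langle \LF\omega_t, \omega_t\rangle_x$ together with the Böchner formula $\LF = \tilde\Delta + R$, one gets
\begin{equation*}
\frac{d}{dt}\|\omega_t\|_p^p = -p\int_M |\omega_t|_x^{p-2}\langle (\tilde\Delta + R^+ - R^-)\omega_t, \omega_t\rangle_x\, dx.
\end{equation*}

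Next I would bound the $\tilde\Delta$ term from below. Integrating Lemma \ref{lemmaIneq} against $1$ (so the $-\Delta|\omega|_x^p$ term integrates to zero, at least formally / after the usual cutoff and density arguments) yields
\begin{equation*}
\int_M |\omega_t|_x^{p-2}\langle \tilde\Delta\omega_t,\omega_t\rangle_x\, dx \ge (p-1)\int_M |\omega_t|_x^{p-2}|\nabla\omega_t|_x^2\, dx,
\end{equation*}
and by Lemma \ref{lemmaMagn} the right-hand side controls $\int_M |\nabla(|\omega_t|_x^{p/2-1}\omega_t)|_x^2\, dx$ up to the factor $\frac{4(p-1)}{p^2}$. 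Combining, and abbreviating $I_\nabla := \int_M |\nabla(|\omega_t|_x^{p/2-1}\omega_t)|_x^2\, dx$ and $I_\pm := \int_M \langle R^\pm \omega_t,\omega_t\rangle_x |\omega_t|_x^{p-2}\, dx$, we obtain
\begin{equation*}
-\frac{1}{p}\frac{d}{dt}\|\omega_t\|_p^p \;\ge\; \frac{4(p-1)}{p^2}\,I_\nabla + I_+ - I_-.
\end{equation*}
Now Proposition \ref{propSubc} gives $I_- \le \alpha(I_+ + I_\nabla)$, so
\begin{equation*}
-\frac{1}{p}\frac{d}{dt}\|\omega_t\|_p^p \;\ge\; \left(\frac{4(p-1)}{p^2} - \alpha\right)I_\nabla + (1-\alpha)I_+ \;\ge\; \left(\frac{4(p-1)}{p^2} - \alpha\right)I_\nabla,
\end{equation*}
using $1-\alpha > 0$ and $I_+ \ge 0$. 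It remains to check that $\frac{4(p-1)}{p^2} \ge \alpha$ precisely on the range $p \in [p_1, p_1']$: the equation $4(p-1)/p^2 = \alpha$, i.e. $\alpha p^2 - 4p + 4 = 0$, has roots $p = \frac{2}{1 \pm \sqrt{1-\alpha}}$, which are exactly $p_1$ and $p_1'$, and the quadratic $\alpha p^2 - 4p + 4$ is negative between its roots, giving $4(p-1)/p^2 \ge \alpha$ there. Hence the derivative is $\le 0$, so $t \mapsto \|e^{-t\LF}\omega\|_p$ is non-increasing; letting $t \to 0$ and using that $e^{-t\LF}$ is strongly continuous gives the contraction property $\|e^{-t\LF}\omega\|_p \le \|\omega\|_p$ for all $t > 0$ and $p \in (p_1, p_1')$.

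The main obstacle is not the algebra above but the justification of the formal manipulations: differentiating under the integral sign, the validity of integrating the pointwise Böchner–Bochner inequality of Lemma \ref{lemmaIneq} against the constant function (one must know $\int_M \Delta|\omega_t|_x^p\, dx = 0$, which requires either compactly supported approximations, a Gaffney–type cutoff argument, or a completeness/stochastic-completeness input), and the fact that $|\omega_t|_x^{p-2}$ is singular where $\omega_t$ vanishes when $p < 2$. The standard fix is to replace $|\omega_t|_x$ by $(|\omega_t|_x^2 + \varepsilon)^{1/2}$, carry out all computations with this regularization, and let $\varepsilon \to 0$ at the end; since \cite{mag} carries this out in detail and the proposition is quoted from there, I would indicate the regularization and cutoff scheme briefly and refer to \cite{mag} for the technical details rather than reproducing them in full.
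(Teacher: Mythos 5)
Your overall plan matches the paper's: differentiate $\|\omega_t\|_p^p$, insert the Böchner formula, reduce the $\tilde\Delta$ contribution to $I_\nabla = \int_M|\nabla(|\omega_t|_x^{p/2-1}\omega_t)|_x^2\,dx$, and invoke the $L^p$ subcriticality estimate; the final algebra and the range $(p_1,p_1')$ are also correct. However, the specific passage from the $\tilde\Delta$ term to $I_\nabla$ has a genuine gap. You first integrate Lemma~\ref{lemmaIneq} to obtain
$\int_M |\omega_t|_x^{p-2}\langle\tilde\Delta\omega_t,\omega_t\rangle_x\,dx \geq (p-1)\int_M|\omega_t|_x^{p-2}|\nabla\omega_t|_x^2\,dx$,
and then assert that Lemma~\ref{lemmaMagn} bounds the right-hand side below by $\frac{4(p-1)}{p^2}I_\nabla$. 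But Lemma~\ref{lemmaMagn} compares $|\nabla(|\omega|_x^{p/2-1}\omega)|_x^2$ with $\langle\nabla(|\omega|_x^{p-2}\omega),\nabla\omega\rangle_x$, not with $|\omega|_x^{p-2}|\nabla\omega|_x^2$, and the inequality you actually need,
$(p-1)\int_M|\omega_t|_x^{p-2}|\nabla\omega_t|_x^2\,dx \geq \frac{4(p-1)}{p^2}\,I_\nabla$,
goes the wrong way for $p<2$. Indeed, away from zeros of $\omega_t$ one computes
$|\nabla(|\omega_t|_x^{p/2-1}\omega_t)|_x^2 = (\tfrac{p^2}{4}-1)|\omega_t|_x^{p-2}\,|\nabla|\omega_t|_x|^2 + |\omega_t|_x^{p-2}|\nabla\omega_t|_x^2$,
and since $\tfrac{p^2}{4}-1<0$ and Kato's inequality gives $|\nabla|\omega_t|_x|\leq|\nabla\omega_t|_x$, this is $\geq \tfrac{p^2}{4}|\omega_t|_x^{p-2}|\nabla\omega_t|_x^2$; hence $(p-1)\int|\omega_t|^{p-2}|\nabla\omega_t|^2 \leq \frac{4(p-1)}{p^2}I_\nabla$, the opposite of what you wrote. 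The detour through Lemma~\ref{lemmaIneq} discards precisely the cross term $(p-2)|\omega|^{p-2}|\nabla|\omega||^2$ that Lemma~\ref{lemmaMagn} needs.

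The fix, which is what the paper does, is to skip Lemma~\ref{lemmaIneq} entirely: write $\tilde\Delta = \nabla^*\nabla$ and integrate by parts,
$\int_M\langle\tilde\Delta\omega_t,|\omega_t|_x^{p-2}\omega_t\rangle_x\,dx = \int_M\langle\nabla\omega_t,\nabla(|\omega_t|_x^{p-2}\omega_t)\rangle_x\,dx$,
then apply Lemma~\ref{lemmaMagn} \emph{as stated} to bound the integrand below by $\frac{4(p-1)}{p^2}|\nabla(|\omega_t|_x^{p/2-1}\omega_t)|_x^2$. This reproduces your displayed inequality $-\frac{1}{p}\frac{d}{dt}\|\omega_t\|_p^p \geq \frac{4(p-1)}{p^2}I_\nabla + I_+ - I_-$ legitimately, after which your use of Proposition~\ref{propSubc} and the root analysis of $\alpha p^2-4p+4$ are exactly the paper's. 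A side benefit: since Lemma~\ref{lemmaMagn} holds for all $p>1$, this route proves the claim simultaneously on the whole interval $(p_1,p_1')$, making your restriction to $p\leq 2$ and the subsequent duality step unnecessary. Your closing remarks on regularizing $|\omega_t|_x$ by $(|\omega_t|_x^2+\varepsilon)^{1/2}$ and on justifying the integration by parts are appropriate and apply equally to the corrected route.
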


\begin{proof}

Let $\omega$ be a suitable smooth differential 1-form and $\omega_t = e^{-t\LF} \omega$.  We compute the derivative of $E(t) = \|\omega_t\|_p^p$. We have

\begin{align*} 
    \frac{\partial E(t)}{\partial t} &= \int_M \frac{\partial}{\partial t} |\omega_t|_x^p dx \\
    &= - p \int_M <\LF \omega_t,  |\omega_t|_x^{p-2} \omega_t>_x dx \\
    &= - p \int_M <\nabla \omega_t, \nabla |\omega_t|_x^{p-2} \omega_t>_x + <(R^+ - R^-) \omega_t,   |\omega_t|_x^{p-2}\omega_t>_x dx \\
    &= - p \int_M <\nabla \omega_t, \nabla |\omega_t|_x^{p-2} \omega_t>_x + <(R^+ - R^-) |\omega_t|_x^{\frac{p}{2} - 1} \omega_t ,   |\omega_t|_x^{\frac{p}{2} - 1}\omega_t>_x dx \\
    &\leq - p \int_M   \frac{4(p-1)}{p^2}|\nabla |\omega_t|_x^{\frac{p}{2} - 1} \omega_t|_x^2 + <(R^+ - R^-) |\omega_t|_x^{\frac{p}{2} - 1} \omega_t ,   |\omega_t|_x^{\frac{p}{2} - 1}\omega_t>_x dx.
\end{align*}
We used Lemma \ref{lemmaMagn} to obtain the last inequality. By the subcriticality assumption we have
\begin{multline}
 \frac{\partial E(t)}{\partial t} = - p  (1- \alpha) \int_M <R^+  |\omega_t|_x^{\frac{p}{2} - 1} \omega_t ,   |\omega_t|_x^{\frac{p}{2} - 1}\omega_t>_x dx \\
 - p \left( \frac{4(p-1)}{p^2} - \alpha \right)  \int_M \left| \nabla|\omega_t|_x^{\frac{p}{2} - 1} \omega_t \right|_x^2 dx.
\end{multline} 
Hence $\frac{\partial E(t)}{\partial_t} \leq 0$ for $p$ such that $4(p-1) \geq \alpha p^2$. This is equivalent to $ p \in (p_1, p_1')$ where $p_1 = \frac{2}{1+\sqrt{1-\alpha}}$. \end{proof}

This result ensures the existence of $e^{-t \LF} \omega$ in $L^p(\Lambda^1 T^*M)$ and then one can consider Littlewood-Paley-Stein functions associated with $\LF$ on $L^p$. \newline

\textbf{Proof of Theorem 1.} By Lemma \ref{lemmaMagn2} and Proposition \ref{propsubor} it is sufficient to prove the boundedness of $H_{\LF}$. We follow similar arguments as in \cite{steinb}, p52-54. Let $\omega$ be a smooth non vasnishing 1-differential form and $\omega_t := e^{-t\LF} \omega$. A direct calculation and Lemma \ref{lemmaIneq} give

\begin{equation}\label{145}
  \left\{
      \begin{aligned}
   -  \Delta |\omega|_x^p &\geq  -p <\tilde{\Delta} \omega, \omega>_x |\omega|_x^{p-2} + p(p-1)|\omega|_x^{p-2}|\nabla \omega|_x^2  \\
    - \frac{\partial}{\partial t} |\omega|_x^p &= p <\LF \omega, \omega>_x |\omega|_x^{p-2}.
\end{aligned} \right.
\end{equation}
Using the Böchner formula \eqref{bochner} we obtain
\begin{equation*}
   - \frac{\partial}{\partial t} |\omega_t|_x^p =  p <(\tilde{\Delta} + R^+ -R^-) \omega_t, \omega_t>_x |\omega_t|_x^{p-2}. 
\end{equation*}
Let $\xi, c, k$ be positive constants and set\begin{equation*}
 Q(\omega,x,t) := - \frac{\partial}{\partial t} |\omega_t|_x^p- \xi \Delta |\omega_t|_x^p + < (-c R^+ + k R^-) \omega_t, \omega_t>_x |\omega_t|_x^{p-2}.
 \end{equation*}
Using \eqref{145} we obtain
\begin{multline*}
    Q(\omega,x,t) \geq |\omega_t|_x^{p-2} \biggl[ p (1- \xi) <\tilde{\Delta} \omega_t, \omega_t>_x  + (p-c) <R^+ \omega_t, \omega_t>_x \\ 
    +  (-p+k)<R^- \omega_t , \omega_t>_x + \xi p(p-1)  |\nabla \omega_t|_x^2 \biggr].
\end{multline*}
Hence, \begin{multline} \label{101}
|\omega_t|_x^{2-p} \left[ Q(\omega,x,t) +   |\omega_t|_x^{p-2} p (\xi-1) <\tilde{\Delta} \omega_t, \omega_t>_x \right] \geq \\
  (p-c)<R^+ \omega_t, \omega_t>_x   +  (-p+k)<R^- \omega_t , \omega_t>_x + \xi p(p-1)  |\nabla \omega_t|_x^2.
  \end{multline}
If the quantities $\epsilon := p-c$ and $\eta: = -p+k$ are positive we obtain
\begin{multline} \label{101bis} <R^+ \omega_t, \omega_t>_x   +  <R^- \omega_t , \omega_t>_x +  |\nabla \omega_t|_x^2  \\ \leq C |\omega_t|_x^{2-p} \left[ Q(\omega,x,t) +   |\omega_t|_x^{p-2} p (\xi-1) <\tilde{\Delta} \omega_t, \omega_t>_x \right]
\end{multline}  for some positive constant $C$ depending on $p, \xi, \epsilon$ and $\eta$. In particular we have the pointwise inequality

\begin{equation}\label{60}
    Q(\omega,x,t) +   |\omega_t|_x^{p-2} p (\xi-1) <\tilde{\Delta} \omega_t, \omega_t>_x \geq 0.
\end{equation}
By integration of \eqref{101bis} for $t \in [0, \infty)$ we obtain
\begin{equation*}
    H_{\overrightarrow{\Delta}}(\omega)(x)^2 \leq C \int_0^\infty |\omega_t|_x^{2-p} \left[ Q(\omega,x,t) +   |\omega_t|_x^{p-2} p (\xi-1) <\tilde{\Delta} \omega_t, \omega_t>_x \right]  dt.
\end{equation*}
Integrating over $M$ yields
\begin{multline*}
    \int_M H_{\overrightarrow{\Delta}}(\omega)(x)^p dx  \leq \\ C  \int_M \left(\int_0^\infty |\omega_t|_x^{2-p} \left[ Q(\omega,x,t) +   |\omega_t|_x^{p-2} p (\xi-1) <\tilde{\Delta} \omega_t, \omega_t>_x \right] dt \right)^{\frac{p}{2}} dx.
\end{multline*}
Hence, by \eqref{60}
\begin{multline*}
    \int_M H_{\overrightarrow{\Delta}}(\omega)(x)^p dx  \leq   \\ C \int_M |\omega^*|^{p(1-\frac{p}{2})} \left[\int_0^\infty Q(\omega,x,t) +   |\omega_t|_x^{p-2} p (\xi-1) <\tilde{\Delta} \omega_t, \omega_t>_x dt \right]^{\frac{p}{2}} dx.
\end{multline*}
We use Hölder's inequality on $M$ with powers $\frac{2}{p}$ and $\frac{2}{2-p}$. By \eqref{60}, we can avoid absolute values in the second integral and obtain
\begin{multline*}
    \int_M H_{\overrightarrow{\Delta}}(\omega)(x)^p dx  \leq C  \left[\int_M |\omega^*|^p\right]^{1-\frac{p}{2}} \times \\ \left[\int_M \int_0^\infty Q(\omega,x,t) +   |\omega_t|_x^{p-2} p (\xi-1) <\tilde{\Delta} \omega_t, \omega_t>_x dt dx \right]^{p/2}
\end{multline*}
where \begin{equation}\label{omegastar}
\omega^*(x) := \sup_{t \geq 0} |\omega_t|_x.\end{equation} The maximal inequality \eqref{maxineq} gives
\begin{multline*}
    \int_M H_{\overrightarrow{\Delta}}(\omega)(x)^p dx  \leq   C  \| \omega \|_p^{p(1-\frac{p}{2})} \times \\ \left[\int_M \int_0^\infty Q(\omega,x,t) +   |\omega|_x^{p-2} p (\xi-1) <\tilde{\Delta} \omega_t, \omega_t>_x dt dx \right]^{p/2} .
\end{multline*}
Set 
\begin{multline*}
    I(t) := \int_M < (k R^- - c R^+)\omega_t,|\omega_t|_x^{p-2}  \omega_t>_x \\
   - p (1-\xi) <\tilde{\Delta} \omega_t,  \omega_t>_x |\omega_t|_x^{p-2}  dx.
\end{multline*}
We have \begin{align*}
&\int_M \int_0^\infty Q(\omega,x,t) + p (\xi -1)|\omega|_x^{p-2}<\tilde{\Delta} \omega_t, \omega_t>_x dt dx \\
&=\int_M \int_0^\infty \left[ -\frac{\partial}{\partial t} |\omega_t|_x^p - \xi \Delta |\omega_t|^p \right] dt dx + \int_0^\infty I(t) dt \\
&= \|\omega\|_p^p + \int_0^\infty I(t) dt \\
&\leq \|\omega\|_p^p
\end{align*}
where we used Lemma \ref{lemmaIsubc} below. Note that we also used that we can choose a sequence $t_n$ tending to $ + \infty$ such that $|\omega_{t_n}|_x$ tends to zero. This is true because $\|\omega_t\|_{2}$ tends to zero. Indeed, let $\omega = \LF \beta$ in the range of $\LF$. One has $\| \omega_t \|_2 = \| \LF e^{-t\LF} \beta \|_2 \leq 
\frac{C}{t} \| \beta \|_2$ by the analyticity of the semigroup on $L^2$. The subcriticality condition implies that there is no harmonic form on $L^2$. Hence the range of $\LF$ is dense and this is still true for all $\omega \in L^2$. Therefore, \begin{equation*}
\int_M H_{\LF}(\omega)(x)^p dx \leq C \| \omega \|_p^{p(1-\frac{p}{2})} \| \omega\|_p^{\frac{p^2}{2}} = C \|\omega\|_p^p.
\end{equation*}
By interpolation we obtain the boundedness on $L^q$ for all $q \in (p,2]$.
\begin{lemma}\label{lemmaIsubc}
Under the subcriticality assumption, for all $p \in (p_1,2]$ we can choose positive constants $c, k$ and $\xi$ such that  $I(t) \leq 0$ for all $t>0$.
\end{lemma}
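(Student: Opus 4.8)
The plan is to rewrite $I(t)$ entirely in terms of the rescaled $1$-form $\beta_t := |\omega_t|_x^{\frac p2-1}\omega_t \in L^2(\Lambda^1T^*M)$ and then invoke the $L^p$ subcriticality inequality of Proposition \ref{propSubc}. First, by the pointwise linearity of the Ricci endomorphisms $R^\pm$ one has $\langle R^\pm\omega_t,\omega_t\rangle_x|\omega_t|_x^{p-2}=\langle R^\pm\beta_t,\beta_t\rangle_x$, so the curvature part of $I(t)$ equals $k(R^-\beta_t,\beta_t)_{L^2}-c(R^+\beta_t,\beta_t)_{L^2}$. For the rough-Laplacian term I would integrate by parts, using $\tilde{\Delta}=\nabla^*\nabla$, to get $\int_M\langle\tilde{\Delta}\omega_t,\omega_t\rangle_x|\omega_t|_x^{p-2}\,dx=\int_M\langle\nabla(|\omega_t|_x^{p-2}\omega_t),\nabla\omega_t\rangle_x\,dx$, and then Lemma \ref{lemmaMagn} bounds this from below by $\frac{4(p-1)}{p^2}\|\nabla\beta_t\|_2^2$ (here $p>1$ is used). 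Since this term enters $I(t)$ with the coefficient $-p(1-\xi)$, imposing $\xi<1$ makes this coefficient negative, so substituting the lower bound yields the upper estimate
\[
I(t)\le k(R^-\beta_t,\beta_t)_{L^2}-c(R^+\beta_t,\beta_t)_{L^2}-\tfrac{4(p-1)(1-\xi)}{p}\|\nabla\beta_t\|_2^2 .
\]

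Next I would apply Proposition \ref{propSubc} with $\omega=\omega_t$, namely $(R^-\beta_t,\beta_t)_{L^2}\le\alpha\big[(R^+\beta_t,\beta_t)_{L^2}+\|\nabla\beta_t\|_2^2\big]$, to get
\[
I(t)\le(\alpha k-c)(R^+\beta_t,\beta_t)_{L^2}+\Big(\alpha k-\tfrac{4(p-1)(1-\xi)}{p}\Big)\|\nabla\beta_t\|_2^2 .
\]
Since $(R^+\beta_t,\beta_t)_{L^2}\ge0$ and $\|\nabla\beta_t\|_2^2\ge0$, it suffices to choose $c,k,\xi>0$ with $c\ge\alpha k$ and $\xi\le 1-\frac{\alpha k p}{4(p-1)}$; and for the argument in the proof of Theorem \ref{theoH1} to go through we additionally want $c<p<k$. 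I would take $k=p+\varepsilon$ and $c=\alpha k$ for a small $\varepsilon>0$: then $\alpha k-c=0$, $k>p$, and $c=\alpha(p+\varepsilon)<p$ as soon as $\varepsilon<p(1-\alpha)/\alpha$. It then remains to pick $\xi$ in the interval $\big(0,\,1-\frac{\alpha k p}{4(p-1)}\big)$, which is non-empty precisely when $\alpha k p<4(p-1)$.

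The only genuine point is therefore that $\alpha k p<4(p-1)$ can be arranged, and this is exactly where the hypothesis $p>p_1$ enters. The inequality $\alpha p^2-4p+4<0$ holds for $p$ strictly between the two roots of $\alpha p^2-4p+4$, which are $\frac{2(1-\sqrt{1-\alpha})}{\alpha}=p_1$ and $\frac{2(1+\sqrt{1-\alpha})}{\alpha}=p_1'$; since $p_1\le2\le p_1'$, every $p\in(p_1,2]$ satisfies $\alpha p^2<4(p-1)$ strictly, so by continuity $\alpha(p+\varepsilon)p<4(p-1)$ for $\varepsilon$ small enough, leaving room for $\xi$. With these choices $I(t)\le0$ for all $t>0$, as claimed. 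I expect the main obstacle to be merely the bookkeeping of the constants — checking that the single choice $k=p+\varepsilon$, $c=\alpha k$, $\xi$ slightly below $1-\frac{\alpha k p}{4(p-1)}$ simultaneously satisfies $c>0$, $c<p<k$, $0<\xi<1$ and both sign conditions — together with the routine justification, by density of smooth compactly supported forms as elsewhere in the paper, that the integration by parts and Proposition \ref{propSubc} apply to the forms $\omega_t$.
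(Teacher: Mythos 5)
Your proof is correct and follows essentially the same route as the paper: rewrite everything in terms of $\beta_t=|\omega_t|_x^{p/2-1}\omega_t$, use integration by parts together with Lemma \ref{lemmaMagn} on the rough Laplacian term and Proposition \ref{propSubc} on the curvature term, and then pick $c,k,\xi$ so that both resulting coefficients are non-positive. The only cosmetic differences are that you apply the two bounds in the opposite order (which is harmless since they act on disjoint terms) and that your explicit choice $k=p+\varepsilon$, $c=\alpha k$, $\xi$ small is slightly more concrete than the paper's discussion of the interval of admissible $p$ as $\xi\to 0$.
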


\begin{proof}

By Proposition \ref{propSubc},
\begin{multline*}
   I(t) \leq  \int_M  (\alpha k - c) <R^+ \omega_t, |\omega_t|_x^{p-2} \omega_t>_x dx + \\
   \int_M \alpha k |\nabla
  \left( |\omega_t|_x^{\frac{p}{2} - 1} \omega_t \right)|_x^{2}  
   -p(1-\xi) <\tilde{\Delta} \omega, |\omega|_x^{p-2} \omega>_x dx .
\end{multline*}
By integration by parts and Lemma \ref{lemmaMagn}, we have
\begin{multline*}
    I(t) \leq  \int_M  (\alpha k - c) <R^+ \omega_t, |\omega_t|_x^{p-2} \omega_t>_x dx  \\ + \int_M \left( \alpha k - \frac{4(1-\xi)(p-1)}{p} \right)|\nabla\left( |\omega_t|_x^{\frac{p}{2} - 1} \omega_t \right)|_x^{2} dx.
\end{multline*}
Choose $c, k$ and $\xi$ such that 
\begin{equation*}
  \left\{
      \begin{aligned}
   p \alpha k&\leq 4 (p-1)(1-\xi) \\
      \alpha k &\leq c.
 \end{aligned} \right.
 \end{equation*}
We can choose $k$ as close to $p$ as we want, so any value of $p$ satisfying $\alpha p^2 - 4(p-1)(1-\xi) < 0$ can be chosen to satisfy the first inequality.
If $\xi < 1 - \alpha$, this inequality is satisfied for all $p$ in the interval
$$\left[ 2 \frac{1-\xi -  \sqrt{(1-\xi)(1-\xi-\alpha)}} {\alpha},2 \frac{1-\xi +  \sqrt{(1-\xi)(1-\xi-\alpha)}} {\alpha}\right]$$
which is contained in $[p_1,p_1']$ with bounds tending to $p_1$ and ${p_1}'$ when $\xi$ tends to zero. Hence, for all $p \in (p_1,2]$ we can choose $k$ and $\eta$ such that $p\alpha k \leq 4(p-1)(1-\xi)$. Since $\alpha \in (0,1)$, given $k \in (p, \frac{p}{\alpha})$ we can choose $c =\alpha k < p$. \end{proof}
\textbf{Proof of Theorem 2.} Let $q \in [2,p')$. By Theorem 1, $G^-_{\LF}$ is bounded on $L^{q'}$. By Theorem 6, the Riesz transform is on bounded on $L^q$.


\section[LPS functions for the Hodge-de Rham Laplacian with Ricci curvature bounded from below]{Vertical LPS functions for the Hodge-de Rham Laplacian with Ricci curvature bounded from below}

We recall Bakry's theorem in \cite{bakry}.
\begin{theorem}\label{theoBakr}
Suppose that the Ricci curvature statisfies $R \geq - \kappa$ with $\kappa > 0$, then the modified Riesz transform $\mathcal{R}_\epsilon = d (\Delta + \epsilon)^{-1/2}$ is bounded from $L^p(M)$ to $L^p(\Lambda^1 T^* M)$, for all $p \in (1,\infty)$ and $\epsilon > 0$.
\end{theorem}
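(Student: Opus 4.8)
The plan is to deduce Theorem \ref{theoBakr} from the machinery already built for $\LF$ (Theorems \ref{theoH1}, \ref{theoCoulh} and \ref{theoRieszLPS}), applied not to $\LF$ itself but to the shifted operator $\LF+\epsilon$, after first reducing to the case of large $\epsilon$.

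\emph{Step 1 (reduction to $\epsilon\geq\kappa$).} Since $\Delta$ is submarkovian it admits a bounded $H^\infty$ functional calculus on $L^p(M)$ for every $p\in(1,\infty)$ (cf. \cite{mcint}). Hence for any $\epsilon,\epsilon'>0$ the operator $(\Delta+\epsilon')^{1/2}(\Delta+\epsilon)^{-1/2}=m(\Delta)$, with $m(\lambda)=\sqrt{(\lambda+\epsilon')/(\lambda+\epsilon)}$ bounded holomorphic on a sector, is bounded on $L^p(M)$. Writing $d(\Delta+\epsilon)^{-1/2}=\big(d(\Delta+\epsilon')^{-1/2}\big)m(\Delta)$ shows $\mathcal R_\epsilon$ is bounded on $L^p(M)$ iff $\mathcal R_{\epsilon'}$ is, so we may assume $\epsilon\geq\kappa$. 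The point of this is the following: by the Böchner formula \eqref{bochner}, $\LF+\epsilon=\tilde\Delta+(R+\epsilon)$, and since $R\geq-\kappa\geq-\epsilon$ the endomorphism $R+\epsilon$ is non-negative; equivalently, the subcriticality condition \eqref{soucr} holds for $\LF+\epsilon$ with rate $\alpha=0$, so the associated exponent is $p_1=1$. Moreover $\LF+\epsilon\geq\epsilon>0$, and combining \eqref{bochner} with the domination of $e^{-t\tilde\Delta}$ by $e^{-t\Delta}$ gives $|e^{-t(\LF+\epsilon)}\omega|_x\leq e^{-t\Delta}|\omega|_x$; therefore the maximal inequality \eqref{maxineq} holds for $e^{-t(\LF+\epsilon)}$ on all of $L^p(\Lambda^1T^*M)$, $p\in(1,\infty)$, by Stein's maximal theorem for the submarkovian semigroup $e^{-t\Delta}$.

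\emph{Step 2 (the range $p\geq2$).} With $\alpha=0$, Step 1 lets us apply Theorem \ref{theoH1} to $\LF+\epsilon$: $G^-_{\LF+\epsilon}$ is bounded on $L^q(\Lambda^1T^*M)$ for every $q\in(1,2]$. Running the proof of Theorem \ref{theoRieszLPS} with $\LF$, $\Delta$ replaced by $\LF+\epsilon$, $\Delta+\epsilon$ — legitimate since $d^*(\LF+\epsilon)=(\Delta+\epsilon)d^*$, the operator $\LF+\epsilon$ is invertible with $\sqrt{\LF+\epsilon}$ and $(\LF+\epsilon)^{-1/2}$ well defined, and the scalar horizontal function for the submarkovian semigroup $e^{-t(\Delta+\epsilon)}$ is bounded on every $L^p(M)$ (\cite{steinb}) — yields that $\mathcal R_\epsilon^*=(\Delta+\epsilon)^{-1/2}d^*=d^*(\LF+\epsilon)^{-1/2}$ is bounded on $L^q(\Lambda^1T^*M)$ for $q\in(1,2]$. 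Dualizing, $\mathcal R_\epsilon=d(\Delta+\epsilon)^{-1/2}$ is bounded on $L^p(M)$ for all $p\in[2,\infty)$.

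\emph{Step 3 (the range $p\in(1,2)$).} Here I would invoke Theorem \ref{theoCoulh} with $\Delta$, $\LF$ replaced by $\Delta+\epsilon$, $\LF+\epsilon$: its proof carries over verbatim once one checks the commutations $e^{-t\sqrt{\LF+\epsilon}}d=d\,e^{-t\sqrt{\Delta+\epsilon}}$ and $\sqrt{\LF+\epsilon}\,d=d\sqrt{\Delta+\epsilon}$, which give $\partial_t e^{-t\sqrt{\LF+\epsilon}}d(\Delta+\epsilon)^{-1/2}f=-d\,e^{-t\sqrt{\Delta+\epsilon}}f$, and the reproducing formula for $\LF+\epsilon\geq\epsilon>0$. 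One needs: $G_{\Delta+\epsilon}$ (scalar vertical LPS for $\Delta+\epsilon$) bounded on $L^p(M)$ for $p\in(1,2]$ — this is Stein \cite{steinb}, no curvature hypothesis — and $\overrightarrow g_{\LF+\epsilon}$ bounded on $L^{p'}(\Lambda^1T^*M)$ for $p'\in[2,\infty)$, which follows from $e^{-t(\LF+\epsilon)}$ being dominated by the submarkovian semigroup $e^{-t\Delta}$ and hence admitting a bounded $H^\infty$ calculus on $L^{p'}$ (cf. \cite{mcint}). Together with Step 2 this gives the theorem for all $p\in(1,\infty)$.

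\emph{Main obstacle.} The crux is Step 1. The naive attempt — running the Section 3 energy computation directly for $e^{-t(\LF+\epsilon)}$ and general $\epsilon$ — produces a curvature term of size $(\kappa-\epsilon)\int_0^\infty|\omega_t|_x^p\,dt$ which has the wrong sign and is not a priori finite when $\epsilon<\kappa$, and the maximal inequality \eqref{maxineq} for $e^{-t(\LF+\epsilon)}$ likewise fails to be available. Reducing to $\epsilon\geq\kappa$ via the functional calculus of the submarkovian operator $\Delta$ is exactly what removes this: once $\epsilon\geq\kappa$, $\LF+\epsilon$ behaves like a Hodge–de Rham Laplacian with non-negative Ricci curvature and Theorems \ref{theoH1}–\ref{theoRieszLPS} apply with $\alpha=0$. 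A secondary point to be careful about is justifying the intertwining identities and the reproducing formula for $\LF+\epsilon$ on the closure of the range of $d$, where alone they are used.
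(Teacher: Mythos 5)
Your proposal is correct, and for the range $p\in[2,\infty)$ it is essentially the paper's own argument: the paper likewise shifts the Hodge--de Rham operator so that the zeroth-order term $R+\kappa$ becomes non-negative, proves the $L^{p'}$-boundedness ($p'\leq 2$) of the vertical functional $Z(\omega)=\bigl[\int_0^\infty|\nabla e^{-t(\LF+\kappa)}\omega|_x^2\,dt\bigr]^{1/2}$ via the same pointwise inequalities, obtains the maximal inequality from the Trotter--Kato domination $|e^{-t(\LF+\kappa)}\omega|_x\leq e^{-t\Delta}|\omega|_x$, dualizes as in Theorem~\ref{theoRieszLPS}, and finally passes from $\kappa$ to an arbitrary $\epsilon$ by the bounded functional calculus of $\Delta$ --- exactly your Step 1. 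The only organizational difference is that you invoke Theorem~\ref{theoH1} with $\alpha=0$ for the shifted operator where the paper redoes the short computation directly (Theorem~\ref{Z}); both are legitimate. Where you genuinely diverge is Step 3: the paper does \emph{not} prove the case $p\in(1,2)$ in this section, but explicitly defers that half to \cite{riesz2} and only claims to recover $p\in[2,\infty)$. Your route through Theorem~\ref{theoCoulh} is viable, but note that its key input, the boundedness of $\overrightarrow{g}_{\LF+\epsilon}$ on $L^{p'}$ for $p'>2$, does not follow from \cite{mcint} alone: that reference gives the equivalence between the $H^\infty$ calculus and square function estimates, whereas the fact that a semigroup pointwise dominated by a submarkovian semigroup admits a bounded $H^\infty(\Sigma_\theta)$ calculus with $\theta<\pi/2$ on every $L^q$, $1<q<\infty$, additionally requires a dilation/transference argument (Fendler's dilation for sub-positive contractions combined with Cowling's transference) together with the analyticity of $e^{-t(\LF+\epsilon)}$ on $L^q$, plus the injectivity/dense-range facts you mention. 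With those references supplied, your Step 3 closes the $p<2$ case, which is more than the paper itself does here.
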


Bakry's paper \cite{bakry} contains several other results some of which are proven by probabilistic methods. An analytic proof of the case $p \in (1,2]$ of the previous theorem is given in \cite{riesz2}. In this section we follow the approach of the previous sections to give a relatively short proof in the case $p \in [2,\infty)$. 

\begin{theorem}\label{Z}
Suppose that the Ricci curvature satisfies $R \geq - \kappa$ with $\kappa \geq 0$, then the functional 

\begin{equation*}
   Z(\omega) := \left[ \int_0^\infty |\nabla e^{-t(\LF + \kappa)} \omega |_x^2 dt \right]^{1/2}
\end{equation*} 
is bounded on $L^p$ for $p \in (1,2]$.
\end{theorem}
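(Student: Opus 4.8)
The plan is to mimic the argument used for Theorem \ref{theoH1}, but now working with the parabolic semigroup $e^{-t(\LF+\kappa)}$ instead of the subordinated one, and exploiting the hypothesis $R\geq-\kappa$ in place of the subcriticality condition. Write $\omega_t := e^{-t(\LF+\kappa)}\omega$ for a smooth, nonvanishing $1$-form $\omega$. First I would record the two pointwise identities: from Lemma \ref{lemmaIneq}, $-\Delta|\omega_t|_x^p \geq |\omega_t|_x^{p-2}\bigl[p(p-1)|\nabla\omega_t|_x^2 - p<\tilde{\Delta}\omega_t,\omega_t>_x\bigr]$, and the time-derivative identity $-\tfrac{\partial}{\partial t}|\omega_t|_x^p = p<(\LF+\kappa)\omega_t,\omega_t>_x|\omega_t|_x^{p-2}$. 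Using the Böchner formula \eqref{bochner} we have $<(\LF+\kappa)\omega_t,\omega_t>_x = <\tilde{\Delta}\omega_t,\omega_t>_x + <(R+\kappa)\omega_t,\omega_t>_x$, and since $R\geq-\kappa$ the term $<(R+\kappa)\omega_t,\omega_t>_x$ is nonnegative. This is the key sign that replaces the role of subcriticality: it lets us discard a bad term rather than absorb it.

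Next I would form the good quantity. Set, for a suitable constant $\xi\in(0,1)$,
\begin{equation*}
Q(\omega,x,t) := -\frac{\partial}{\partial t}|\omega_t|_x^p - \xi\,\Delta|\omega_t|_x^p .
\end{equation*}
Combining the two displayed inequalities as in the proof of Theorem \ref{theoH1}, one gets
\begin{equation*}
Q(\omega,x,t) \geq p(1-\xi)<\tilde{\Delta}\omega_t,\omega_t>_x|\omega_t|_x^{p-2} + \xi p(p-1)|\omega_t|_x^{p-2}|\nabla\omega_t|_x^2 + p<(R+\kappa)\omega_t,\omega_t>_x|\omega_t|_x^{p-2}.
\end{equation*}
Because $p\leq2$ we have $1-\xi>0$, and I would like the $<\tilde{\Delta}\omega_t,\omega_t>_x$ term to be nonnegative too; integrating by parts, $\int_M<\tilde{\Delta}\omega_t,\omega_t>_x|\omega_t|_x^{p-2}\,dx = \int_M<\nabla\omega_t,\nabla(|\omega_t|_x^{p-2}\omega_t)>_x\,dx \geq \tfrac{4(p-1)}{p^2}\int_M|\nabla(|\omega_t|_x^{\frac{p}{2}-1}\omega_t)|_x^2\,dx\geq0$ by Lemma \ref{lemmaMagn} (this is the same computation appearing in the proof of Proposition \ref{propDecreasing}). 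Pointwise one only knows $Q\geq0$ after noting the Laplacian-beltrami inequality, so as in Section 3 I record the pointwise bound $Q(\omega,x,t)\geq0$ and the fact that $\xi p(p-1)|\omega_t|_x^{p-2}|\nabla\omega_t|_x^2\leq Q(\omega,x,t)$, hence
\begin{equation*}
|\nabla\omega_t|_x^2 \leq C\,|\omega_t|_x^{2-p}\,Q(\omega,x,t).
\end{equation*}

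Then the rest is the Stein-type machine exactly as in the proof of Theorem \ref{theoH1}. Integrate in $t$ to bound $Z(\omega)(x)^2 \leq C\int_0^\infty|\omega_t|_x^{2-p}Q(\omega,x,t)\,dt$; raise to the power $p/2$, integrate over $M$, pull out $\sup_t|\omega_t|_x$ via Hölder with exponents $\tfrac{2}{p}$ and $\tfrac{2}{2-p}$, using $Q\geq0$ to drop absolute values; then control $\|\sup_t|\omega_t|_x\|_p$ and the remaining integral $\int_M\int_0^\infty Q(\omega,x,t)\,dt\,dx$. The latter telescopes: $\int_0^\infty\int_M\bigl(-\tfrac{\partial}{\partial t}|\omega_t|_x^p - \xi\Delta|\omega_t|_x^p\bigr)\,dx\,dt = \|\omega\|_p^p$, using that $\int_M\Delta|\omega_t|_x^p\,dx=0$ and that $\|\omega_t\|_p\to0$ along a sequence $t_n\to\infty$ (which holds because $R\geq-\kappa$ gives the pointwise domination $|\omega_t|_x = |e^{-t(\LF+\kappa)}\omega|_x\leq e^{-t\Delta}|\omega|_x$ and hence $L^2$-decay on the dense range of $\LF+\kappa$, noting $\LF+\kappa\geq0$; alternatively one uses analyticity as in Section 3). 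Combining gives $\|Z(\omega)\|_p^p\leq C\|\omega\|_p^{p(1-\frac{p}{2})}\|\omega\|_p^{\frac{p^2}{2}} = C\|\omega\|_p^p$.

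The main obstacle I anticipate is not any single estimate — each ingredient is already in place — but rather the bookkeeping around the $\sup_t$ term: here, unlike in Theorem \ref{theoH1}, no maximal inequality is assumed, so one must genuinely use $R\geq-\kappa$ to get $|e^{-t(\LF+\kappa)}\omega|_x\leq e^{-t\Delta}|\omega|_x$ pointwise (Böchner), whence $\sup_t|\omega_t|_x\leq\sup_t e^{-t\Delta}|\omega|_x$, and the maximal function of the submarkovian semigroup $e^{-t\Delta}$ is bounded on $L^p$ for $p\in(1,\infty)$ by Stein \cite{steinb}. One should also be slightly careful that $e^{-t(\LF+\kappa)}$ acts contractively on $L^p$ for $p\in(1,2]$, which again follows from the same pointwise domination, so that $\omega_t$ is well-defined in $L^p$ and the formal computations are justified for suitable dense $\omega$.
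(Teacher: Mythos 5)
Your outline follows the paper's proof closely — same Stein-type machine, same pointwise domination $|e^{-t(\LF+\kappa)}\omega|_x\le e^{-t\Delta}|\omega|_x$ to control the maximal function, same telescoping in $t$ — but you have introduced an extraneous parameter $\xi\in(0,1)$ that creates a genuine gap in the pointwise step. After combining the two pointwise identities, your inequality reads
\begin{equation*}
Q(\omega,x,t)\ \ge\ p(1-\xi)\langle\tilde{\Delta}\omega_t,\omega_t\rangle_x|\omega_t|_x^{p-2}+\xi p(p-1)|\omega_t|_x^{p-2}|\nabla\omega_t|_x^2+p\langle(R+\kappa)\omega_t,\omega_t\rangle_x|\omega_t|_x^{p-2},
\end{equation*}
and you then assert $\xi p(p-1)|\omega_t|_x^{p-2}|\nabla\omega_t|_x^2\le Q(\omega,x,t)$ pointwise. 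This does not follow when $\xi<1$: the quantity $\langle\tilde{\Delta}\omega_t,\omega_t\rangle_x=\tfrac12\Delta|\omega_t|_x^2+|\nabla\omega_t|_x^2$ has no definite pointwise sign (for instance near a strict local minimum of $|\omega_t|_x^2$ it is negative), so you cannot discard the first term on the right. The integration-by-parts fact you invoke, $\int_M\langle\tilde{\Delta}\omega_t,\omega_t\rangle_x|\omega_t|_x^{p-2}\,dx\ge0$, only helps after integration over $M$, but the Stein machine needs the pointwise bound before you pull out $\sup_t|\omega_t|_x$.

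The paper avoids this entirely: it does not introduce $\xi$ at all, but instead substitutes $\tilde{\Delta}\omega_t=\LF\omega_t-R\omega_t=-\tfrac{\partial}{\partial t}\omega_t-\kappa\omega_t-R\omega_t$ directly into the conclusion of Lemma \ref{lemmaIneq}, so the $\tilde{\Delta}$ term disappears and only the pointwise-nonnegative term $p\langle(R+\kappa)\omega_t,\omega_t\rangle_x|\omega_t|_x^{p-2}$ is dropped. In your notation this is exactly $\xi=1$, for which $Q=-\tfrac{\partial}{\partial t}|\omega_t|_x^p-\Delta|\omega_t|_x^p$ and the pointwise inequality $p(p-1)|\omega_t|_x^{p-2}|\nabla\omega_t|_x^2\le Q$ holds with no loose ends. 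Your proof is repaired either by setting $\xi=1$, or — if you insist on $\xi<1$ — by carrying the correction term $p(\xi-1)|\omega_t|_x^{p-2}\langle\tilde{\Delta}\omega_t,\omega_t\rangle_x$ alongside $Q$ through the whole argument as is done in the proof of Theorem \ref{theoH1}, where $\xi<1$ is genuinely needed to absorb the subcritical term; here there is no such term to absorb, so the extra parameter only adds difficulty. The remaining steps (Hölder with $\tfrac2p,\tfrac2{2-p}$, Stein's maximal inequality for $e^{-t\Delta}$ via domination, $\omega_t\to0$ in $L^2$) are all as in the paper and correct.
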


\begin{proof}

Set $\omega_t := e^{-t(\LF+ \kappa)} \omega$. Lemma 7 gives
\begin{align*}
    - \Delta |\omega_t|_x^p &\geq p(p-1)|\nabla\omega_t|_x^2 |\omega_t|_x^{p-2} - p < \tilde{\Delta} \omega_t, |\omega_t|_x^{p-2}  \omega_t  >_x \\
    &= p(p-1)|\nabla \omega_t|_x^2 |\omega_t|_x^{p-2} - p < (\LF - R) \omega_t, |\omega_t|_x^{p-2}  \omega_t >_x \\
    &= p(p-1)|\nabla \omega_t|_x^2 |\omega_t|_x^{p-2} - p < (- \frac{\partial}{\partial t} - \kappa - R) \omega_t, |\omega_t|_x^{p-2}  \omega_t >_x \\
    &\geq  p(p-1)|\nabla \omega_t|_x^2 |\omega_t|_x^{p-2} + p <  \frac{\partial}{\partial t} \omega_t, |\omega_t|_x^{p-2}  \omega_t >_x
\end{align*}
where we used $R \geq - \kappa$. Multiplying by $|\omega|_x^{2-p}$ we obtain

\begin{equation}\begin{aligned}
    |\nabla \omega_t|_x^2 &\leq C |\omega_t|_x^{2-p} \left[ - \Delta |\omega_t|_x^p - p  <  \frac{\partial}{\partial t} \omega_t, |\omega_t|_x^{p-2}  \omega_t >_x \right] \\
   \label{pouet} &\leq C |\omega_t|_x^{2-p} \left[ - \Delta |\omega_t|_x^p - \frac{\partial}{\partial t} |\omega_t|_x^p \right].
   \end{aligned}\end{equation}
As a consequence of \eqref{pouet} one has the pointwise inequality
\begin{equation}\label{posi}
- \Delta |\omega_t|_x^p - \frac{\partial}{\partial t} |\omega_t|_x^p \geq  0.
\end{equation}
We integrate \eqref{pouet} for $t \in (0,\infty)$  and use \eqref{posi} to obtain
\begin{align*}
   Z(\omega)(x)^2 &\leq C (\sup_{t>0} |\omega_t|_x)^{2-p} \int_0^{\infty} \left( - \Delta |\omega_t|_x^p - \frac{\partial}{\partial t} |\omega_t|_x^p  \right) dt  \\
   &= C (\sup_{t>0} |\omega_t|_x)^{2-p} \left( |\omega|_x^p - \int_0^\infty \Delta |\omega_t|_x^p dt \right).
\end{align*} 
In the last equality we used the fact that $\lim_{t \rightarrow + \infty} \omega_t = 0$ (in $L^2$). This can be seen from the domination \begin{equation}\label{domination}
|e^{-t(\LF + \kappa)} \omega|_x \leq e^{-t\Delta} |\omega|_x
\end{equation} and $ \lim_{t \rightarrow + \infty} e^{-t\Delta} |\omega|_x = 0$ in $L^2$. The pointwise domination \eqref{domination} is proven as follows. By the Trotter-Kato formula,
\begin{align*}
    |e^{-t(\LF + \kappa)} \omega|_x &= |e^{-t\LF - t \kappa} \omega|_x \\
    &= \lim_{n \rightarrow \infty} \left| \left[ (e^{-\frac{t}{n}(\tilde{\Delta})}e^{-\frac{t}{n}(R + \kappa)} \right]^n \omega_t\right|_x  \\
   &\leq e^{-t\Delta} |\omega|_x.
\end{align*}
Integrating over $M$ yields
\begin{align*}
   \| Z(\omega) \|_p^p &\leq C \left( \int_M (\sup_{t>0} |\omega_t|_x)^p dx \right)^{(2-p)/2} \left( \int_M \left[ |\omega|_x^p - \int_0^\infty \Delta |\omega_t|_x^p dt \right] dx\right)^{p/2} \\
   &\leq C \left( \int_M (\sup_{t>0} |\omega_t|_x)^p dx \right)^{(2-p)/2} \| \omega\|_p^{\frac{p^2}{2}}
\end{align*} 
Here we used Hölder's inequality with exponents $\frac{2}{2-p}$ and $\frac{2}{p}$. By \eqref{domination} we have $\sup_{t>0} |\omega_t|_x^p \leq \sup_{t>0} e^{-t\Delta} |\omega|_x^p$. Hence,
\begin{align*}
\int_M (\sup_{t>0} |\omega_t|_x)^p dx &\leq \int_M \sup_{t>0} (e^{-t\Delta} |\omega|_x)^p dx \\
&\leq C \int_M |\omega|_x^p dx
\end{align*}
because $e^{-t\Delta}$ satisfies a maximal inequality as it is a submarkovian semigroup (see \cite{steinb} p73). As a consequence one has $ \| Z(\omega) \|_p \leq C \|\omega\|_p$.
\end{proof}
As a consequence of Theorem \ref{Z} we recover the boundedness of $\mathcal{R}_\epsilon$ on $L^p$ for $ p \in [2, \infty)$. Indeed, since $Z$ is bounded on $L^{p'}$, the same techniques as in Theorem \ref{theoCoulh} gives that $\mathcal{R}_\kappa$ is bounded on $L^p$. Finally, by writing $\mathcal{R}_\epsilon = \mathcal{R}_\kappa (\Delta+\kappa)^{1/2} (\Delta+ \epsilon)^{-1/2}$ and using functional calculus for $\Delta$ we see that $\mathcal{R}_\epsilon$ is bounded on $L^p$.

\section{Horizontal LPS functions for the Hodge-de Rham Laplacian for $p\leq 2$}

In this section we prove the boundedness of $\overrightarrow{g}$ for small values of $p$ under the same assumptions as in Theorem 1. 
\begin{theorem}\label{ver}
Suppose that the negative part of Ricci curvature satisfies \eqref{soucr} for some  $\alpha \in (0,1)$ and let $p_1 := \frac{2}{1+\sqrt{1-\alpha}}$. Given $p \in (p_1,2]$ and suppose that $e^{-t\sqrt{\LF}}$ satisfies the maximal inequality \begin{equation}\label{maxineqracine}
\| \sup_{t>0} |e^{-t\sqrt{\LF}} \omega|_x \|_p \leq C \| \omega \|_p
\end{equation} 
for all $\omega \in L^p$. Then $\overrightarrow{g}$ is bounded from $L^p(\Lambda^1 T^* M)$ to  $L^p(M)$. It is also bounded on $L^q$ for $q \in [p,2]$. For $q \in [p,2]$, we have the lower estimate \begin{equation}\label{lowerestimate}
\| \overrightarrow{g}(\omega) \|_{q'} \geq C \| \omega\|_{q'}, \quad \forall \omega \in L^{q'}.
\end{equation}
\end{theorem}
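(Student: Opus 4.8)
The plan is to follow the same template as the proof of Theorem \ref{theoH1}, now applied to the subordinated semigroup $e^{-t\sqrt{\LF}}$ rather than to $e^{-t\LF}$. The key observation is that the horizontal functional $\overrightarrow{g}$ involves $\frac{\partial}{\partial t} e^{-t\sqrt{\LF}} \omega = -\sqrt{\LF} e^{-t\sqrt{\LF}} \omega$, and one would like to control $|\sqrt{\LF} e^{-t\sqrt{\LF}} \omega|_x$ pointwise by a combination of a time-derivative term and a spatial-Laplacian term, so that the same integration in $t$ followed by Hölder's inequality on $M$ (with exponents $\frac{2}{p}$ and $\frac{2}{2-p}$) and the maximal inequality \eqref{maxineqracine} goes through. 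Concretely, I would set $u(x,t) := |e^{-t\sqrt{\LF}}\omega|_x^p$ and look for a pointwise differential inequality of the form
\begin{equation*}
|\sqrt{\LF} e^{-t\sqrt{\LF}}\omega|_x^2 \leq C |e^{-t\sqrt{\LF}}\omega|_x^{2-p}\bigl[ -\partial_{tt} u - \xi \Delta u + (\text{Ricci terms with good sign})\bigr],
\end{equation*}
mimicking \eqref{101bis}. Here the second $t$-derivative appears because the natural ``energy'' to differentiate twice in $t$ for a subordinated (Poisson-type) semigroup is $\|e^{-t\sqrt{\LF}}\omega\|_p^p$, exactly as in Stein \cite{steinb} p52-54 for the scalar Poisson semigroup; the identity $\partial_{tt} e^{-t\sqrt{\LF}} = \LF e^{-t\sqrt{\LF}}$ is what lets one reintroduce $\LF$ and hence, via Böchner \eqref{bochner}, the rough Laplacian and the Ricci curvature, at which point Lemma \ref{lemmaIneq}, Lemma \ref{lemmaMagn} and Proposition \ref{propSubc} apply verbatim to absorb $R^-$ into $R^+$ and $|\nabla(\cdot)|^2$ under the subcriticality rate $\alpha$, giving the range $p \in (p_1,2]$.

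After establishing such a pointwise inequality, the integration argument is essentially identical to the one in the proof of Theorem \ref{theoH1}: integrate in $t$ over $(0,\infty)$, using that the boundary terms at $t=\infty$ vanish (because $\|e^{-t\sqrt{\LF}}\omega\|_2 \to 0$, which follows as in Theorem \ref{theoH1} from analyticity on $L^2$ together with density of the range of $\LF$, itself a consequence of the absence of $L^2$-harmonic forms under \eqref{soucr}), and that the $\Delta u$ term integrates against $1$ over $M$ to give a nonpositive contribution; then raise to the power $p/2$, apply Hölder on $M$ with the pair $(\frac{2}{2-p},\frac{2}{p})$, bound the $\sup_{t>0}|e^{-t\sqrt{\LF}}\omega|_x$ factor by $C\|\omega\|_p$ using \eqref{maxineqracine}, and collect exponents to get $\|\overrightarrow{g}(\omega)\|_p^p \leq C\|\omega\|_p^{p(1-p/2)}\|\omega\|_p^{p^2/2} = C\|\omega\|_p^p$. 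The $L^q$-boundedness for $q \in [p,2]$ then follows by interpolation with the $L^2$ case (where $\overrightarrow{g}$ is an isometry up to a constant by the spectral theorem).

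For the lower estimate \eqref{lowerestimate} I would argue by duality, exactly as in the reproducing-formula computation in the proof of Theorem \ref{theoRieszLPS}. Using the polarized reproducing formula $(\omega,\eta)_{L^2} = 4\int_0^\infty (\partial_t e^{-t\sqrt{\LF}}\omega, \partial_t e^{-t\sqrt{\LF}}\eta)_{L^2}\, t\, dt$ for $\omega \in L^{q'}(\Lambda^1 T^*M)$ and $\eta \in L^q(\Lambda^1 T^*M)$ with $\|\eta\|_q = 1$, Cauchy-Schwarz in $t$ and Hölder on $M$ give $|(\omega,\eta)_{L^2}| \leq 4\|\overrightarrow{g}(\omega)\|_{q'}\|\overrightarrow{g}(\eta)\|_q \leq C\|\overrightarrow{g}(\omega)\|_{q'}$, where the last step uses the already-proven boundedness of $\overrightarrow{g}$ on $L^q$ for $q \in [p,2]$; taking the supremum over such $\eta$ gives $\|\omega\|_{q'} \leq C\|\overrightarrow{g}(\omega)\|_{q'}$, which is \eqref{lowerestimate}.

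The main obstacle I expect is establishing the pointwise inequality controlling $|\sqrt{\LF} e^{-t\sqrt{\LF}}\omega|_x$: one must handle the fact that $\sqrt{\LF} e^{-t\sqrt{\LF}}\omega = -\partial_t e^{-t\sqrt{\LF}}\omega$ is only a first-order object in $t$, whereas the natural energy identity produces $\partial_{tt}$; the resolution, following Stein, is to work with the quantity $-\partial_{tt} u$ where $u = |e^{-t\sqrt{\LF}}\omega|_x^p$, note $\partial_{tt}|e^{-t\sqrt{\LF}}\omega|_x^2 = 2|\partial_t e^{-t\sqrt{\LF}}\omega|_x^2 + 2\langle \LF e^{-t\sqrt{\LF}}\omega, e^{-t\sqrt{\LF}}\omega\rangle_x$, then pass from the square to the $p$-th power via the elementary inequality \eqref{17}-\eqref{19}, and finally invoke Böchner to turn $\langle\LF\cdot,\cdot\rangle$ into the Ricci-plus-rough-Laplacian combination that the subcriticality hypothesis is designed to dominate. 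Keeping track of the sign conditions on the auxiliary constants $\xi, c, k$ (analogues of Lemma \ref{lemmaIsubc}) so that the error term $I(t)$ is nonpositive precisely on $(p_1,2]$ is the delicate bookkeeping, but it is structurally the same as in Lemma \ref{lemmaIsubc}.
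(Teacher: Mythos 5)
Your duality argument for the lower estimate \eqref{lowerestimate} matches Stein's, and your overall skeleton (second time-derivative of $|\omega_t|_x^p$, Hölder, maximal inequality, interpolation) is correct; but the route you propose for the main pointwise estimate is substantially heavier than the one the paper actually uses, and one of your signs is off. You plan to replicate the machinery of Theorem \ref{theoH1}: add a spatial Laplacian $\xi\Delta u$, split $\langle\LF\omega_t,\omega_t\rangle_x$ via Böchner into $\tilde\Delta$- and Ricci-pieces, then absorb $R^-$ with Lemma \ref{lemmaIneq}, Lemma \ref{lemmaMagn}, Proposition \ref{propSubc} and the $\xi,c,k$ bookkeeping of Lemma \ref{lemmaIsubc}. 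That would work and would in fact prove a stronger statement (control of $|\nabla\omega_t|_x^2$ and the Ricci terms as well), but none of it is needed: $\overrightarrow{g}$ only involves $|\partial_t\omega_t|_x^2$, and the Cauchy--Schwarz computation gives directly
\begin{equation*}
\partial_{tt}|\omega_t|_x^p \;\geq\; p(p-1)|\partial_t\omega_t|_x^2\,|\omega_t|_x^{p-2} \;+\; p\langle\LF\omega_t,|\omega_t|_x^{p-2}\omega_t\rangle_x,
\end{equation*}
with no appeal to the spatial Laplacian or Böchner at all. The paper multiplies this by $t\,|\omega_t|_x^{2-p}$, integrates in $t$ and over $M$, and then disposes of the term involving $I(t):=\int_M\langle\LF\omega_t,|\omega_t|_x^{p-2}\omega_t\rangle_x\,dx$ simply by noting $I(t)\geq 0$, which is exactly the computation of Proposition \ref{propDecreasing} (this is the \emph{only} place the subcriticality assumption enters, and it replaces the entire $\xi,c,k$ discussion). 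Two smaller points: the sign in your template should be $+\partial_{tt}u$, not $-\partial_{tt}u$, since $\partial_{tt}e^{-t\sqrt{\LF}}=\LF e^{-t\sqrt{\LF}}$ makes $\partial_{tt}|\omega_t|_x^p$ the subordinated analogue of $-\partial_t|\omega_t|_x^p$ for $e^{-t\LF}$; and because $\overrightarrow{g}$ carries the weight $t\,dt$, integrating $\partial_{tt}u$ by parts produces a boundary term $t\,\partial_t u\big|_{t\to\infty}$, so you must show $t\sqrt{\LF}e^{-t\sqrt{\LF}}\omega\to 0$ in $L^2$ (not merely $\|\omega_t\|_2\to 0$), which the paper does by first taking $\omega$ in the range of $\sqrt{\LF}$ and then using density.
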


\begin{proof}
Let $\omega$ be a suitable 1-form and set $\omega_t := e^{-t\sqrt{\LF}} \omega$. We compute

\begin{equation}\begin{aligned}\label{equaequa}
    \frac{\partial^2}{\partial t^2} |\omega_t|_x^p &=  p\frac{\partial}{\partial t} <-\sqrt{\LF} \omega_t, \omega_t>_x |\omega_t|_x^{p-2} \\
    &= p |\omega_t|_x^{p-2} \left[ <\LF \omega_t, \omega_t>_x + |\sqrt{\LF} \omega_t|_x^2  \right.\\
    & \hspace{3cm} + (p-2)|\omega_t|_x^{-2}<\sqrt{\LF} \omega, \omega_t>^2_x\biggr] \\
    &\geq p(p-1)|\frac{\partial}{\partial t} \omega_t|_x^2 |\omega_t|^{p-2}_x + p <\LF \omega_t, |\omega_t|_x^{p-2} \omega_t>_x.
\end{aligned}\end{equation}
Here we used the Cauchy-Schwarz inequality. Note that \eqref{equaequa} implies \begin{equation}\label{posig}
\frac{\partial^2}{\partial t^2} |\omega_t|_x^p  - p <\LF \omega_t, |\omega_t|_x^{p-2} \omega_t>_x \, \ \geq 0.
\end{equation} Set $w^* := \sup_{t>0} |w_t|_x$. We multiply \eqref{equaequa} by $t |\omega_t|^{2-p}_x$ and integrate for $t \in (0,\infty)$. Using \eqref{posig} we obtain
\begin{align*}
    \overrightarrow{g}(x)^2 &\leq C \int_0^\infty |\omega_t|_x^{2-p} \left[ \frac{\partial^2}{\partial t^2} |\omega_t|_x^p  - p <\LF \omega_t, |\omega_t|_x^{p-2} \omega_t>_x \right] t dt \\
    &\leq C (\omega^*)^{2-p}(x) \int_0^\infty  \left[ \frac{\partial^2}{\partial t^2} |\omega_t|_x^p  - p <\LF \omega_t, |\omega_t|_x^{p-2} \omega_t>_x \right]t dt  \\
    &\leq C (\omega^*)^{2-p}(x) \left[ |\omega|_x^p - p \int_0^\infty <\LF \omega_t, |\omega_t|_x^{p-2} \omega_t>_x t dt \right]. 
\end{align*}
Note that we used the fact that $t \sqrt{\LF}e^{-t\sqrt{\LF}} \omega \rightarrow 0$ in $L^2$ when $t \rightarrow +\infty$. We argue similarly as when we proved $\|e^{-t\LF} \omega \|_2 \rightarrow 0$, taking first $\omega$ in the range of $\sqrt{\LF}$ and using its density. Hölder's inequality for exponents $\frac{2}{p}$ and $\frac{2}{2-p}$ yields
\begin{equation}\label{kennylala}
    \| \overrightarrow{g} (\omega) \|_p^p \leq C \| \omega^* \|_p^{p(1-\frac{p}{2})}\left[ \int_M \left(|\omega|_x^p  - p \int_0^\infty I(t) t dt \right) dx\right]^{p/2},
\end{equation}
where $I(t) = \int_M <\LF \omega_t, \omega_t |\omega_t|_x^{p-2}>_x dx.$ The same calculations as in Proposition 11 yield $I(t) \geq 0$ for all $t > 0$. Hence, \eqref{kennylala} gives $\| \overrightarrow{g}(\omega) \|_p \leq C \| \omega\|_p$. We deduce by interpolation that $\overrightarrow{g}$ is bounded on $L^q$ for all $ q \in [p,2]$. The lower estimate is obtained as  in \cite{steinb}, p55-56.
\end{proof}

\section[LPS functions for the Schrödinger Operator in the subcritical case for $p\leq2$]{Vertical LPS functions for the Schrödinger Operator in the subcritical case for $p\leq2$}
In this section we prove Theorem \ref{theoHL}. The following lemma is obtained by applying \eqref{48} to $|f|^\frac{p}{2}$.

\begin{lemma}\label{lemmaSubcrV}
Assume that $V^-$ is subcritical with rate $\alpha$, then for all suitable $f$ in $L^p$ we have
\begin{equation}\label{SCPP}
     \int_M V^- |f|^p dx \leq \alpha \int_M \frac{p^2}{4}|\nabla f|^2 |f|^{p-2} + V^+ |f|^p dx.
\end{equation}\end{lemma}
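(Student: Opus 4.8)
The plan is to imitate the proof of Proposition~\ref{propSubc}: we want to deduce the $L^p$-type inequality \eqref{SCPP} from the $L^2$-type subcriticality assumption \eqref{48} by a substitution. First I would set, for a suitable non-vanishing $f \in L^p(M)$, the function $h := |f|^{p/2}$. Since $f \in L^p$, we have $h \in L^2$, and $h$ is still ``suitable'' enough (e.g.\ one approximates by $C_c^\infty$ functions, or truncates $|f|$ away from its zero set and passes to the limit) to apply \eqref{48} to it. Plugging $h$ into \eqref{48} gives
\begin{equation*}
 \int_M V^- |f|^p \, dx \leq \alpha \int_M \left( V^+ |f|^p + \big|\nabla |f|^{p/2}\big|^2 \right) dx .
\end{equation*}

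Next I would rewrite the gradient term. A direct computation (chain rule for the a.e.\ differentiable function $t \mapsto t^{p/2}$ applied to $|f|$, together with the Kato-type inequality $|\nabla |f|| \leq |\nabla f|$ a.e.) gives
\begin{equation*}
 \big|\nabla |f|^{p/2}\big|^2 = \frac{p^2}{4} |f|^{p-2} \big|\nabla |f|\big|^2 \leq \frac{p^2}{4} |f|^{p-2} |\nabla f|^2
\end{equation*}
pointwise a.e.\ on $M$ (on the set where $f=0$ both sides vanish for $p>1$, so there is no issue). Substituting this bound into the previous inequality yields exactly \eqref{SCPP}.

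The main technical point — really the only obstacle — is the justification that \eqref{48}, which is stated for $f \in C_c^\infty(M)$, may legitimately be applied to $h = |f|^{p/2}$, which is in general only in $W^{1,2}(M)$ (and not smooth or compactly supported). This is handled by a standard density/truncation argument: \eqref{48} extends by density to the form domain of $L$, i.e.\ to all $h$ in the completion of $C_c^\infty(M)$ under $\big(\int_M V^+ h^2 + |\nabla h|^2\big)^{1/2}$ (and one checks $|f|^{p/2}$ lies there using $f \in L^p$ and the gradient bound above, plus cutoff functions to handle non-compact support); alternatively one works first with $f \in C_c^\infty$, regularizes $|f|$ to $(f^2+\varepsilon)^{1/2}$ to avoid the singularity of $t\mapsto t^{p/2}$ at the origin when $p<2$, applies \eqref{48}, and lets $\varepsilon \to 0$ by dominated convergence. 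I would simply invoke this as routine. With \eqref{SCPP} in hand, Theorem~\ref{theoHL} then follows by running the same scheme as in the proof of Theorem~\ref{theoH1}: one writes the analogue of the quantity $Q$ using $-\partial_t|f_t|^p$, $-\xi\Delta|f_t|^p$ and the potential terms $(kV^- - cV^+)|f_t|^p$ with $f_t = e^{-tL}f$, uses Lemma~\ref{lemmaIneq} and \eqref{SCPP} to get pointwise positivity, integrates in $t$ and in $x$, applies Hölder with exponents $\frac2p$ and $\frac2{2-p}$, and controls the maximal function $\sup_{t>0}|f_t|$ by the submarkovian maximal inequality for $e^{-tL}$ available in the subcritical regime (for $p\in(p_1,2]$), exactly as in Section~3; the subordination formula \eqref{sub} then transfers the bound from $H_L$ to $G_L$.
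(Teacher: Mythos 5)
Your proof is correct and follows essentially the same route as the paper, which simply applies \eqref{48} to $|f|^{p/2}$ and uses $|\nabla|f|^{p/2}|^2 = \tfrac{p^2}{4}|f|^{p-2}|\nabla|f||^2 \leq \tfrac{p^2}{4}|f|^{p-2}|\nabla f|^2$. The elaborations you add (the chain-rule computation, the density/regularization argument for non-smooth $|f|^{p/2}$, and the closing discussion of Theorem~\ref{theoHL}) are harmless extra detail rather than a different method.
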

\textbf{Proof of Theorem \ref{theoHL}.}
By the subordination formula \eqref{sub}, it is sufficient to prove the boundedness of $H_L$. We have $$ H_L(f) \leq C \left( H_L(f^+) + H_L(f^-) \right).$$ Thus it is sufficient to prove $\|H_L(f)\|_p \leq C \| f \|_p$ for all non-negative functions $f$. Let $f$ be a non-negative function and set $f_t := e^{-tL} f$. We have $f_t>0$ for all $t>0$. Let $\epsilon, \eta$ and $\xi$ be positive constants and set
\begin{equation*}
    Q(f,x,t) := (- \frac{\partial}{\partial t} -\xi \Delta - c V^+ + k V^-) f_t^p.
\end{equation*}
We have 
\begin{align*}
       Q(f,x,t) &= p [(\Delta + V) f_t] f_t^{p-1} + \xi p (p-1) |\nabla f_t|^2 f_t^{p-2}  \\ & \hspace{4cm} - \xi p (\Delta f_t) f_t^{p-1} -c V^+ f_t^p  + k V^- f_t^p \\
       &= \xi p (p-1) |\nabla f_t|^2 f_t^{p-2} + p (1- \xi) (\Delta f_t) f_t^{p-1} \\ &\hspace{4cm}+ [(p-c)V^+ + (k-p)V^- ]f_t^{p}.
\end{align*}
We multiply by $f_t^{2-p}$ to obtain
\begin{multline}
\label{50}
\xi p (p-1) |\nabla f_t|^2 +\left[(p-c) V^+  + (k - p) V^- \right]f_t^{2} = \\ f_t^{2-p} Q(f,x,t) + p(\xi -1) f_t \Delta f_t.
\end{multline}
Set  $\epsilon := p - c$ and $\eta := k-p$. If $\epsilon$ and $\eta$ are positive, the integration of \eqref{50} for $t \in [0,\infty)$ yields
\begin{equation}\label{51}
    H_L(f)^2(x) \leq C \int_0^\infty f_t^{2-p}  \left[ ( -\frac{\partial}{\partial t} - \xi \Delta  - c V^+  + kV^-)f_t^p +p (\xi -1) f_t^{p-1} \Delta f_t \right]   dt
\end{equation}
where $C$ is constant depending on $\epsilon, \eta, \xi$ and $p$. A useful consequence of \eqref{50} is \begin{equation}\label{50plus}
( -\frac{\partial}{\partial t} - \xi \Delta  - c V^+  + kV^-)f_t^p +p (\xi -1) f_t^{p-1} \Delta f_t \geq 0. \end{equation} Set $f^* := \sup_{t >0} f_t$. Using \eqref{50plus}, \eqref{51} gives
\begin{multline}\label{666}
   H_L(f)^2(x)  \leq \\ C (f^*)^{2-p} \int_0^\infty \left[ ( -\frac{\partial}{\partial t} - \xi \Delta  - c V^+  + kV^- )f_t^p +p (\xi -1) f_t^{p-1} \Delta f_t \right] dt.
\end{multline}
Set \begin{equation*}
    I(t) := \int_M (- c V^+  + kV^-) f_t^p - p (1- \xi )f_t^{p-1} \Delta f_t.
\end{equation*}
By Hölder's inequality, \eqref{666} implies
\begin{align*}
    \| H_L(f)\|^p_p &\leq C \left[ \int_M (f^*)^{p} dx \right]^{\frac{2-p}{2}} \times \\ & \quad \left[ \int_M \int_0^\infty  \left[ ( -\frac{\partial}{\partial t} - \xi \Delta  - c V^+  + kV^- )f_t^p +p (\xi -1) f_t^{p-1} \Delta f_t \right] \, dx dt\right]^{\frac{p}{2}} \\
    & \leq C \|f^*\|_p^{\frac{p(2-p)}{2}}\left[ \|f\|_p^p + \int_0^\infty I(t) dt \right]^{\frac{p}{2}}.
\end{align*}
By Lemma \ref{lemmaIpourL} below, we can choose $c, k$ and $\eta$ such that $I(t)  \leq 0$ for all $t > 0$. Hence,
\begin{equation*}
     \| H_L(f)\|^p_p \leq C \|f^*\|_p^{\frac{p(2-p)}{2}} \| f\|_p^{\frac{p^2}{2}}.
\end{equation*}
The same argument as is Proposition \ref{propDecreasing} implies that $e^{-tL}$ is a contraction semigroup on $L^p$ for all $p \in (p_1, {p_1}')$. It is also a classical fact that $e^{-tL}$ is a positive semigroup. For a positive contraction and analytic semigroup one has $\| f^* \|_p \leq C \|f \|_p$ (see \cite{lemerdy}, Corollary 4.1). Hence,
\begin{equation*}
     \| H_L(f)\|^p_p \leq C \|f \|_p^p.
\end{equation*}
\begin{lemma}\label{lemmaIpourL}
Under the subcriticality assumption, for all $p \in (p_1,2]$ there exist $c$, $k$ and $\xi$ positive constants such that $I(t) \leq 0$ for all $t>0$.
\end{lemma}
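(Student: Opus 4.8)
The plan is to mimic the structure of Lemma~\ref{lemmaIsubc}, with the scalar subcriticality inequality \eqref{SCPP} from Lemma~\ref{lemmaSubcrV} playing the role that Proposition~\ref{propSubc} played there. First I would rewrite $I(t)$ so that the term $-p(1-\xi)\int_M f_t^{p-1}\Delta f_t\,dx$ is turned into a gradient term via integration by parts: since $f_t>0$ is smooth and decays suitably, $-\int_M f_t^{p-1}\Delta f_t\,dx = \int_M \langle \nabla f_t, \nabla f_t^{p-1}\rangle\,dx = (p-1)\int_M |\nabla f_t|^2 f_t^{p-2}\,dx$. Hence
\begin{equation*}
I(t) = \int_M (k V^- - c V^+) f_t^p\,dx - p(1-\xi)(p-1)\int_M |\nabla f_t|^2 f_t^{p-2}\,dx.
\end{equation*}

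Next I would apply Lemma~\ref{lemmaSubcrV} to the non-negative function $f_t$ to bound $\int_M V^- f_t^p\,dx \leq \alpha \int_M \big(\tfrac{p^2}{4}|\nabla f_t|^2 f_t^{p-2} + V^+ f_t^p\big)\,dx$. Substituting this into $I(t)$ gives
\begin{equation*}
I(t) \leq \int_M (\alpha k - c) V^+ f_t^p\,dx + \Big(\frac{\alpha k p^2}{4} - p(1-\xi)(p-1)\Big)\int_M |\nabla f_t|^2 f_t^{p-2}\,dx.
\end{equation*}
So it suffices to choose $c,k,\xi>0$ with $\alpha k \leq c$ and $\alpha k p^2 \leq 4p(1-\xi)(p-1)$, i.e. $\alpha k p \leq 4(p-1)(1-\xi)$. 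Recall from the proof of Theorem~\ref{theoHL} that $c = p-\epsilon$ and $k = p+\eta$ with $\epsilon,\eta>0$, so I need $k$ slightly larger than $p$ and $c$ slightly smaller than $p$; since $\alpha<1$, taking $k\in(p,p/\alpha)$ and $c=\alpha k$ gives $c<p$, and then the first condition holds with equality. For the second condition, with $k$ taken close enough to $p$ it reads (in the limit $k\to p$) $\alpha p^2 \leq 4(p-1)(1-\xi)$; exactly as in Lemma~\ref{lemmaIsubc}, for $\xi<1-\alpha$ this is satisfied on an interval around $[p_1,p_1']$ whose endpoints tend to $p_1,p_1'$ as $\xi\to 0$, hence for every $p\in(p_1,2]$ one can pick $\xi$ small and then $k$ close to $p$ making both inequalities hold.

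The only genuinely delicate point is the integration by parts and the justification that the relevant boundary/integrability terms vanish—one wants $f_t^{p-1}\Delta f_t$ and $|\nabla f_t|^2 f_t^{p-2}$ to be integrable and the manifold has no boundary, which follows from $f\in C_c^\infty(M)$, the smoothing of the Schrödinger semigroup, and the $L^2$-decay used earlier; since $p\le 2$ the power $f_t^{p-2}$ is a negative power and one should observe $f_t>0$ strictly on all of $M$ for $t>0$ (positivity-improving property of $e^{-tL}$), so the expressions are well-defined, and if needed one argues first on a form with $|\omega|$ bounded below and passes to the limit. Everything else is the bookkeeping of the two linear inequalities above, identical in spirit to Lemma~\ref{lemmaIsubc}. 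This completes the proof of Lemma~\ref{lemmaIpourL}, and hence of Theorem~\ref{theoHL}.
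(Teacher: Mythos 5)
Your proof is correct and follows essentially the same route as the paper: integrate by parts to convert $-p(1-\xi)\int f_t^{p-1}\Delta f_t\,dx$ into a gradient term, apply Lemma~\ref{lemmaSubcrV} to $f_t$, and then choose $c,k,\xi$ satisfying $\alpha k\leq c$ and $p\alpha k\leq 4(p-1)(1-\xi)$ exactly as in Lemma~\ref{lemmaIsubc}. Your extra remarks on justifying the integration by parts (positivity-improving property, negative powers of $f_t$ for $p<2$) are a sensible supplement the paper leaves implicit, though note your aside about "a form $\omega$" should read "a function $f$" in this scalar setting.
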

\begin{proof}
By Lemma \ref{lemmaSubcrV} and integration by parts,
\begin{align*}
    I(t) &\leq \int_M (\alpha  k -  c) V^+ f^p + \alpha \frac{p^2}{4} |\nabla f_t|^2 f_t^{p-2} - p(1- \xi) f_t^{p-1} \Delta f_t \, dx  \\
    & \leq \int_M (\alpha  k -  c) V^+ f^p + \left[\alpha k \frac{p^2}{4}-p(p-1)(1-\xi)\right]|\nabla f_t|^2 f_t^{p-2} \, dx.
\end{align*}
Choose $k,c$ and $\eta$ such that
\begin{equation*}
  \left\{
      \begin{aligned}
   p \alpha k&\leq 4 (p-1)(1-\xi) \\
      \alpha k &\leq c.
 \end{aligned} \right.
 \end{equation*}
The same discussion as in Lemma \ref{lemmaIsubc} gives that $\xi, \eta$ and $\epsilon$ can be chosen to allow any value of $p \in (p_1,2]$. \end{proof}
 
 \section[LPS functions for the Schrödinger Operator in the subcritical case for $p>2$.]{Vertical LPS functions for the Schrödinger Operator in the subcritical case for $p>2$}
 
 In this section, we assume the manifold has the doubling property, that is there exists a positive constant $C$ such that for all $x \in M$ and $r > 0$, \begin{equation} \label{doubling}
 Vol(x,2r) \leq C Vol(x,r)
 \end{equation} where $Vol(x,r)$ is the volume of the ball of center $x$ and radius $r$ for the Riemannian distance $\rho$. This is equivalent to the fact that for some constants $C$ and $N$,
 \begin{equation} \label{N}
 Vol(x,\lambda r) \leq C \lambda^N Vol(x,r)
 \end{equation} for all $\lambda \geq 1$. We suppose in addition that the heat kernel $p_t(x,y)$ associated with $\Delta$ has a Gaussian upper estimate, that is there exist positive constants $C$ and $c$ such that \begin{equation}\label{gaussian}
 p_t (x,y) \leq C \frac{e^{-c\rho^2(x,y)/t}}{Vol(x,\sqrt{t})}.
 \end{equation} 
 Under these assumptions, the semigroup $e^{-tL}$ is uniformly bounded on $L^p(M)$ for all $p \in (p_0,{p_0}')$ where ${p_0}' := \frac{2}{1-\sqrt{1-\alpha}} \frac{N}{N-2}$. 
 If $N \leq 2$ it is true for ${p_0}' := +\infty$. 
 Under some integrability conditions on $V$, it is proven in \cite{AssaadOuhabaz} (Theorem 3.9) that the Riesz transform $dL^{-1/2}$ is bounded on $L^p$ for $p$ in some interval $[2,q)$ if $d\Delta^{-1/2}$ is also bounded.
 We recall that $H_L$ is defined by $$H_L(f)(x) = \left( \int_0^\infty |\nabla e^{-tL}f|^2(x) + |V| (e^{-tL}f)|^2(x) \, dt\right)^{1/2}.$$ Note that if $V=0$ we obtain $H_\Delta = H$. We prove, under similar integrability contiditions on $V$, that the Littlewood-Paley-Stein function $H_L$ is bounded in the same interval. We recall a proposition from \cite{AssaadOuhabaz}.
 
 \begin{proposition}\label{Assaad}
 Assume that $V^-$ satisfies \eqref{48} for some $\alpha \in (0,1)$. Let ${p_0}' = + \infty $ if $N \leq 2$ and ${p_0}' = \frac{2}{1-\sqrt{1-\alpha}} \frac{N}{N-2}$ if $N > 2$.  
 Given $p$ and $q$ such that $p_0< p \leq q < {p_0}'$ and set $\frac{1}{r} := \frac{1}{p} - \frac{1}{q}$, then the family of operators $Vol(x,\sqrt{t})^{\frac{1}{r}}e^{-tL}$ is uniformly bounded from $L^p(M)$ to $L^q(M)$. 
 By duality, the family $e^{-tL} Vol(x,\sqrt{t})^{\frac{1}{r}}$ is uniformly bounded from $L^{q'}(M)$ to $L^{p'}(M)$ for all ${p_0} < q' \leq p' < {p_0}'$ with $\frac{1}{r} = \frac{1}{q'} - \frac{1}{p'}$.
 \end{proposition}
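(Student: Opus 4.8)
The plan is to split the potential as $V=V^+-V^-$ and treat the two pieces by different mechanisms — domination for the good part, form-perturbation for the bad part — before assembling the $L^p$--$L^q$ smoothing by iteration and closing with a duality argument. For the good part, set $A:=\Delta+V^+$. Since $V^+\ge 0$, the Trotter--Kato product formula together with the positivity of $e^{-t\Delta}$ and of $e^{-tV^+}$ gives the pointwise domination $|e^{-tA}f|\le e^{-t\Delta}|f|$, so the heat kernel of $A$ obeys the Gaussian bound \eqref{gaussian}. Under the doubling hypothesis \eqref{doubling} (equivalently \eqref{N}) this upgrades to the full family of $L^p$--$L^q$ smoothing estimates
\begin{equation*}
\bigl\| Vol(\cdot,\sqrt t)^{\frac1p-\frac1q}\,e^{-tA}\bigr\|_{L^p\to L^q}\le C\qquad\text{uniformly in }t>0,\ 1\le p\le q\le\infty .
\end{equation*}
This is standard: the Gaussian bound gives the on-diagonal estimate $\| Vol(\cdot,\sqrt t)\,e^{-tA}\|_{L^1\to L^\infty}\le C$, which one interpolates with the $L^p$-contractivity of $e^{-tA}$, using \eqref{doubling} to trade $Vol(y,\sqrt t)$ for $Vol(x,\sqrt t)$ on $\{\rho(x,y)\lesssim\sqrt t\}$ and absorbing the remainder into the Gaussian factor.

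For the bad part, write $L=A-V^-$. By \eqref{48}, $V^-$ is form-bounded relative to $\mathcal E_A(f)=\int_M(|\nabla f|^2+V^+f^2)\,dx$ with relative bound $\alpha<1$, hence
\begin{equation*}
\mathcal E_L\ \ge\ (1-\alpha)\,\mathcal E_A\ \ge\ (1-\alpha)\,\mathcal E_\Delta .
\end{equation*}
I would extract two facts from this. First, $e^{-tL}$ is uniformly (in $t$) bounded on $L^p$ for every $p\in(p_0,p_0')$: for $p$ in the subinterval $(p_1,p_1')$, $p_1=\frac{2}{1+\sqrt{1-\alpha}}$, this is the contraction statement obtained by running the computation of Proposition \ref{propDecreasing} and Lemma \ref{lemmaIsubc} on functions with \eqref{48} in place of \eqref{soucr}; to reach the full interval up to $p_0'=\frac{2}{1-\sqrt{1-\alpha}}\frac{N}{N-2}$ one feeds the dimension-$N$ Sobolev inequality (available from \eqref{doubling}+\eqref{gaussian}) into the same $L^p$-energy identity so as to absorb the residual term that subcriticality alone does not kill — this is the Liskevich--Sobol type enlargement of the $L^p$-interval for subcritical Schrödinger operators. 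Second, $\mathcal E_L\ge(1-\alpha)\mathcal E_\Delta$ transports the Nash/Sobolev inequality of dimension $N$ from $\Delta$ to the form of $L$, and since $L$ is associated to a lower-bounded form so that $e^{-tL}$ is an $L^2$-contraction, the usual Nash argument yields a one-step gain of integrability with the correct volume weight near $2$, and more generally (by the same computation at level $p$ using the $L^p$-Sobolev inequality valid throughout $(p_0,p_0')$) an estimate $\bigl\| Vol(\cdot,\sqrt t)^{\frac1p-\frac1{\sigma(p)}}e^{-tL}\bigr\|_{L^p\to L^{\sigma(p)}}\le C$ for a fixed gain $p\mapsto\sigma(p)>p$, uniformly in $t$, whenever $p$ and $\sigma(p)$ both lie in $(p_0,p_0')$.

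It then remains to assemble. Interpolating the $L^p$-boundedness of $e^{-tL}$ with the one-step gain, and iterating the gain finitely many times, produces $\bigl\| Vol(\cdot,\sqrt t)^{1/r}e^{-tL}\bigr\|_{L^p\to L^q}\le C$ for every $p_0<p\le q<p_0'$ with $\frac1r=\frac1p-\frac1q$; here the semigroup property $e^{-tL}=e^{-(t/n)L}\cdots e^{-(t/n)L}$ concatenates the steps and \eqref{doubling} identifies $Vol(\cdot,\sqrt{t/n})\simeq Vol(\cdot,\sqrt t)$ and lets one push the weight through each factor. Finally, $L$ is self-adjoint and multiplication by the positive function $Vol(\cdot,\sqrt t)^{1/r}$ is self-adjoint, so taking Hilbert-space adjoints of $Vol(\cdot,\sqrt t)^{1/r}e^{-tL}:L^p\to L^q$ gives exactly $e^{-tL}\,Vol(\cdot,\sqrt t)^{1/r}:L^{q'}\to L^{p'}$ with the same bound, which is the stated dual family.

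The hard part will be the treatment of $V^-$ and, within it, obtaining the \emph{sharp} interval $(p_0,p_0')$. Because $e^{-tL}$ is not sub-Markovian when $V^-\not\equiv0$, the naive $L^1$--$L^\infty$ ultracontractivity bootstrap is unavailable — one cannot simply run Nash on $L^1$ — so the subcriticality rate $\alpha$ (which by itself only yields $L^p$-contractivity on $(p_1,p_1')$) must be interlaced with the Sobolev dimension $N$ both to prove $L^p$-boundedness on the enlarged interval and to keep the iteration from leaving it; making the endpoints come out as exactly $p_0=\bigl(\tfrac{2}{1-\sqrt{1-\alpha}}\tfrac{N}{N-2}\bigr)'$ and $p_0'=\tfrac{2}{1-\sqrt{1-\alpha}}\tfrac{N}{N-2}$ is the delicate accounting. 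Everything else — the domination of the $V^+$-semigroup, the passage from a Gaussian bound to $L^p$--$L^q$ smoothing under doubling, the concatenation of the steps, and the duality — should be routine once this is settled.
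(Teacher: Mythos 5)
This proposition is not proved in the paper at all; it is introduced with ``We recall a proposition from \cite{AssaadOuhabaz}'' and used as a black box, so there is no in-paper proof to compare against. What you have written is therefore an attempt to reconstruct the argument from Assaad--Ouhabaz, and the outline you give is indeed the right one: domination $|e^{-t(\Delta+V^+)}f|\le e^{-t\Delta}|f|$ plus doubling for the $V^+$ part, form-subcriticality and an $L^p$-level Nash/Sobolev bootstrap (Liskevich--Sobol style) for the $V^-$ part, iteration along the semigroup, and self-adjointness of both $e^{-tL}$ and the weight for the dual statement.

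However, there is a genuine gap, and you flag it yourself: the entire nontrivial content of the proposition for $p\neq 2$ --- proving uniform $L^p$-boundedness of $e^{-tL}$ beyond $(p_1,p_1')$, running the $L^p$-energy inequality against the Sobolev inequality of exponent $N$, and verifying that the endpoints come out to exactly $p_0$ and $p_0'=\frac{2}{1-\sqrt{1-\alpha}}\frac{N}{N-2}$ --- is deferred as ``delicate accounting'' and never carried out. The sketch also has a circularity to untangle: you invoke an ``$L^p$-Sobolev inequality valid throughout $(p_0,p_0')$'' as the input to the one-step gain, but the availability of that inequality at level $p$ for the perturbed form (with a constant that does not blow up as $p\to p_0'$) is precisely what the Liskevich--Sobol computation establishes, so it cannot be assumed up front. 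Without executing that computation, the proof does not show where the specific factor $\frac{N}{N-2}$ or the specific $\frac{2}{1-\sqrt{1-\alpha}}$ come from; one could equally well ``derive'' a smaller interval. The parts you do treat in full --- the $V^+$-domination with the Gaussian bound and doubling, the concatenation $e^{-tL}=\bigl(e^{-(t/n)L}\bigr)^n$ with $Vol(\cdot,\sqrt{t/n})\simeq Vol(\cdot,\sqrt t)$, and the adjoint step --- are correct and routine, as you say.
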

Set \begin{equation*} \left\{\begin{aligned}
H_L^{(\nabla)}(f) &:= \left(\int_0^\infty |\nabla e^{-tL} f|^2 dt \right)^{1/2}, \\
H_L^{(V)}(f) &:= \left(\int_0^\infty |V| |e^{-tL} f|^2 dt \right)^{1/2}.
\end{aligned} \right. \end{equation*}
Note that $H_L(f) \leq \sqrt{2} \left[H_L^{(\nabla)}(f) + H_L^{(V)}(f) \right].$  We will use the next proposition which follows from Proposition 4.2 in \cite{riesz}.

 \begin{proposition}\label{lemmagradsemi}
 If $H^{(\nabla)}_L$ is bounded of $L^p$, then there exists a positive constant $C$ such that for all $f \in L^p$ and $t>0$, \begin{equation}
 \label{gradsemi} \| \nabla e^{-tL} f \|_p \leq \frac{C}{\sqrt{t}} \| f\|_p.
 \end{equation}
 \end{proposition}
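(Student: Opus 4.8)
The plan is to obtain the fixed‑time gradient bound from the square function estimate by a duality argument that passes through the resolvent $(I+tL)^{-1}$.

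First I would rephrase the hypothesis in dual form. Boundedness of $H^{(\nabla)}_L$ on $L^p$ means exactly that the map $U\colon f\mapsto (de^{-sL}f)_{s>0}$ is bounded from $L^p(M)$ into $L^p\bigl(M;L^2((0,\infty),ds;\Lambda^1 T^*M)\bigr)$; since $1<p<\infty$ and $L^2((0,\infty),ds;\Lambda^1 T^*M)$ is a Hilbert space, the adjoint $U^*$ is bounded from $L^{p'}\bigl(M;L^2((0,\infty),ds;\Lambda^1 T^*M)\bigr)$ to $L^{p'}(M)$, with $\|U^*\|=\|U\|$. Using that $e^{-sL}$ is self-adjoint and that $de^{-sL}$ is bounded on $L^2$ for each fixed $s>0$ — with $\|de^{-sL}\|_{L^2\to L^2}\le Cs^{-1/2}$, an elementary energy estimate in which the subcriticality of $V^-$ is used to absorb the potential term — one computes $U^*\bigl((g_s)_s\bigr)=\int_0^\infty (de^{-sL})^{*}g_s\,ds$, so that
\[
\Bigl\| \int_0^\infty (de^{-sL})^{*}g_s\,ds \Bigr\|_{p'}\ \le\ C\,\Bigl\| \Bigl(\int_0^\infty |g_s|_x^2\,ds\Bigr)^{1/2} \Bigr\|_{p'}
\]
for every family $(g_s)_s$ of $1$-forms.

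Next I would test this inequality on $g_s:=t^{-1}e^{-s/t}\,h$, with $h\in L^{p'}\cap L^2(\Lambda^1 T^*M)$ and $t>0$ fixed. Because $d$ is closed and commutes with the operator-norm convergent Laplace integral $t^{-1}\int_0^\infty e^{-s/t}e^{-sL}\,ds=(I+tL)^{-1}$, the left-hand side equals $\bigl(d(I+tL)^{-1}\bigr)^{*}h$, while the right-hand side equals $(2t)^{-1/2}\|h\|_{p'}$. Hence $\bigl\|(d(I+tL)^{-1})^{*}h\bigr\|_{p'}\le C\,t^{-1/2}\|h\|_{p'}$; extending by density and dualizing back gives
\[
\|d(I+tL)^{-1}g\|_p\ \le\ C\,t^{-1/2}\|g\|_p,\qquad g\in L^p(M).
\]

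Finally I would pass from the resolvent to the semigroup via $de^{-tL}=d(I+tL)^{-1}\,(I+tL)e^{-tL}$. For the values of $p$ we are concerned with, $e^{-tL}$ is uniformly bounded and analytic on $L^p$, so $(I+tL)e^{-tL}=e^{-tL}+tLe^{-tL}$ is uniformly bounded on $L^p$; therefore $\|de^{-tL}g\|_p\le \|d(I+tL)^{-1}\|_{p\to p}\,\|(I+tL)e^{-tL}g\|_p\le C\,t^{-1/2}\|g\|_p$, which is the claim. The delicate point is the first duality step: one must read $(de^{-sL})^{*}$ and the adjoint of $d(I+tL)^{-1}$ as operators on $L^{p'}$ rather than $L^2$, and justify pulling $d$ through the Laplace integral — routine once one argues with $L^2\cap L^{p'}$ data and closes up by density, but this is where the only genuine analytic inputs ($\|de^{-sL}\|_{L^2\to L^2}\lesssim s^{-1/2}$ and analyticity of $e^{-tL}$ on $L^p$) enter; everything downstream is formal.
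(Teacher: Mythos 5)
Your argument is correct, and it reconstructs a result the paper does not actually prove: the text simply cites Proposition~4.2 of~\cite{riesz} and gives no argument of its own, so any self-contained proof is an addition. Your route — view $H^{(\nabla)}_L$-boundedness as $L^p\to L^p(L^2(ds))$ boundedness of $U\colon f\mapsto (de^{-sL}f)_s$, dualize to bound $U^*(g)=\int_0^\infty e^{-sL}d^*g_s\,ds$ on $L^{p'}$, test on $g_s=t^{-1}e^{-s/t}h$ so that the Laplace transform of the semigroup collapses to the resolvent $(I+tL)^{-1}$ and the right-hand side evaluates to $(2t)^{-1/2}\|h\|_{p'}$, then pass from the resolvent bound $\|d(I+tL)^{-1}\|_{p\to p}\le Ct^{-1/2}$ to the semigroup via $de^{-tL}=d(I+tL)^{-1}\,(I+tL)e^{-tL}$ and $L^p$-analyticity — each step is sound. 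The one extra input you invoke beyond the square-function hypothesis, namely uniform boundedness and analyticity of $e^{-tL}$ on $L^p$, is available exactly where the paper uses the proposition ($p\in(p_0,p_0')$, by Stein interpolation of $L^2$-analyticity against the uniform $L^p$ bound from the subcriticality/Gaussian assumptions), and you correctly flag the routine density argument needed to justify the adjoint and the interchange of $d$ with the Laplace integral on $L^{p'}$. For comparison, the usual Coulhon--Duong argument is more elementary in style: it avoids the explicit vector-valued duality and resolvent subordination, instead using a pointwise averaging identity of the type $\nabla e^{-tL}f=\tfrac{2}{t}\int_{t/2}^t\nabla e^{-sL}f\,ds+\text{(error)}$ combined with Cauchy--Schwarz in $s$ and $L^p$-analyticity to control the error; that version stays on the $L^p$ side throughout, whereas yours trades that for a cleaner algebraic identity at the cost of passing through $L^{p'}$. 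Both rely on analyticity of $e^{-tL}$ on $L^p$, so neither is strictly more general, and both give the same $t^{-1/2}$ rate.
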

The main result of this section is the following.
\begin{theorem} \label{psup}
Assume that $V^-$ satisfies \eqref{48} for some $\alpha \in (0,1)$. Define $p_0$ as before. Suppose there exist $r_1, r_2 > 2$ such that
\begin{equation}\label{hypotheseintegre}
\int_0^1 \| \frac{V}{Vol(.,\sqrt{t})^{\frac{1}{r_1}}}\|^2_{r_1} t dt < \infty, \,   \int_1^\infty \| \frac{V}{Vol(.,\sqrt{t})^{\frac{1}{r_2}}}\|^2_{r_2} t dt < \infty .
\end{equation}
Set $r := \inf(r_1,r_2)$. If $N > 2$, let $p \in [2,\frac{{p_0}'r}{{p_0}'+r})$ and assume that $H_\Delta$ is bounded on $L^p$, then $H_L^{(\nabla)}$ is bounded on $L^p$. If $N \leq 2$, let $p \geq 2$ and assume $H_\Delta$ is bounded on $L^p$, then $H_L^{(\nabla)}$ is bounded on $L^p$.
\end{theorem}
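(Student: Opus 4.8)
The plan is to reduce the boundedness of $H_L^{(\nabla)}$ to that of $H_\Delta$ by a perturbation argument, treating the potential $V$ as a perturbation of the free Laplacian via the Duhamel formula. First I would write, for a non-negative $f$ (it suffices to treat $f^+$ and $f^-$ separately, as in the proof of Theorem \ref{theoHL}), the Duhamel identity
\begin{equation*}
e^{-tL} f = e^{-t\Delta} f - \int_0^t e^{-(t-s)\Delta} V e^{-sL} f \, ds,
\end{equation*}
and hence
\begin{equation*}
\nabla e^{-tL} f = \nabla e^{-t\Delta} f - \int_0^t \nabla e^{-(t-s)\Delta} V e^{-sL} f \, ds.
\end{equation*}
The first term contributes $H_\Delta(f)$ after taking the $L^2(tdt)$-norm — wait, more precisely $H_\Delta$ as defined uses $dt$, not $t\,dt$, so I would keep the measure straight and use $\left(\int_0^\infty |\nabla e^{-t\Delta}f|^2 dt\right)^{1/2} = H(f)$, which is controlled by $H_\Delta$. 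The heart of the matter is to bound the $L^p$-norm of the square function coming from the integral remainder term.

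For the remainder term I would proceed as follows. Split $V = V^+ - V^-$; by the subcriticality assumption \eqref{48} the $L^p$ theory of $L$ is available (so $e^{-sL}$ is bounded on $L^p$ for $p$ in the relevant range, and by Proposition \ref{lemmagradsemi} once we close the argument we get the gradient bound, but for the perturbation step I would instead use the off-diagonal/smoothing estimates of Proposition \ref{Assaad}). The key pointwise tool is the gradient-of-heat-semigroup estimate $\|\nabla e^{-\tau\Delta} h\|_r \leq C\tau^{-1/2}\|h\|_r$ valid for all $r\in(1,\infty)$ under \eqref{doubling} and \eqref{gaussian}, combined with the $L^{q'}\to L^{p'}$ (equivalently $L^p\to L^q$) bound for $Vol(\cdot,\sqrt{s})^{1/\rho}e^{-sL}$ from Proposition \ref{Assaad}. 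I would estimate the square function
\begin{equation*}
\left(\int_0^\infty \left| \int_0^t \nabla e^{-(t-s)\Delta} V e^{-sL} f \, ds \right|^2 dt\right)^{1/2}
\end{equation*}
by Minkowski's integral inequality in the $t$-variable, reducing to controlling $\int_0^\infty \|\nabla e^{-(t-s)\Delta}(V e^{-sL}f)\|$-type quantities; Hölder in $x$ with exponents tied to $r_1,r_2$ lets the hypothesis \eqref{hypotheseintegre} — which says exactly that $V/Vol(\cdot,\sqrt{s})^{1/r_i}$ lies in a weighted $L^2(sds)$ space with values in $L^{r_i}$ — absorb the singular factors. The splitting $\int_0^1$ versus $\int_1^\infty$ matches the two hypotheses with $r_1$ and $r_2$, and the constraint $p < \frac{p_0' r}{p_0'+r}$ is precisely what guarantees that the intermediate exponent $q$ defined by $\frac1p = \frac1q + \frac1r$ still lies below $p_0'$, so that Proposition \ref{Assaad} applies.

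The main obstacle I anticipate is bookkeeping the three exponents ($p$ for the target space, $r$ from the integrability of $V$, and the intermediate $q$) together with the two time-scales and the two measures ($dt$ versus $t\,dt$) so that all the interpolation/Hölder steps are legitimate and the singular powers of $t-s$ and of $Vol(\cdot,\sqrt s)$ integrate against $s\,ds$ to something finite; in particular one must check that the $\tau^{-1/2}$ from the gradient estimate, after Minkowski and the change of variables, produces an integrable kernel in $(s,t)$ — this is a Schur-type estimate and is where the condition $r>2$ is genuinely used. The case $N\le 2$ is easier since $p_0'=\infty$ removes the upper constraint on $p$ and on $q$. Once $H_L^{(\nabla)}$ is shown bounded, I would note (for later use, though it is not part of this theorem's statement) that $H_L^{(V)}$ is handled by an entirely analogous but simpler computation, since it involves no gradient and only the multiplication by $|V|$, again absorbed by \eqref{hypotheseintegre} via Proposition \ref{Assaad}.
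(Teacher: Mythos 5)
Your overall architecture matches the paper: write the Duhamel perturbation formula, pull the gradient through, control the remainder square function by Minkowski's integral inequality and Hölder, feed the weighted integrability of $V$ through Proposition \ref{Assaad}, and split the outer $t$-integral at $t=1$ to match $r_1$ and $r_2$. That much is on the right track.

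However, there is one genuine error that would make the proof fail and one structural omission. The error: you assert that the gradient bound $\|\nabla e^{-\tau\Delta}h\|_r \leq C\tau^{-1/2}\|h\|_r$ ``is valid for all $r\in(1,\infty)$ under \eqref{doubling} and \eqref{gaussian}.'' This is false for $r>2$; doubling and the Gaussian upper bound do not by themselves yield such an estimate on the range of $r$ this theorem needs (this is precisely the regime where, e.g., the Riesz transform may fail to be bounded on $L^r$, as in the Coulhon--Duong example). The theorem therefore explicitly assumes that $H_\Delta$ is bounded on $L^p$, and the paper deduces the needed gradient bound $\|\nabla e^{-t\Delta}f\|_p\leq Ct^{-1/2}\|f\|_p$ from \emph{that} assumption via Proposition \ref{lemmagradsemi} applied with $V=0$. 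Without this chain you have no $\tau^{-1/2}$ smoothing for $p>2$, and the remainder term cannot be controlled. You brush against Proposition \ref{lemmagradsemi} but then set it aside in favor of Proposition \ref{Assaad}, which gives $L^p\to L^q$ smoothing through the volume weight but carries no gradient; you need both.

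The structural omission: after Minkowski, the inner $s$-integral over $(0,t)$ must itself be split into $(0,t/2)$ and $(t/2,t)$ (giving the paper's $I_1,\dots,I_4$). On $(0,t/2)$ the gradient falls on the short-time factor and $s^{-1/2}$ is locally integrable while $Vol(\cdot,\sqrt{t-s})\geq Vol(\cdot,\sqrt{t/2})$ lets you pull the $V$-weight out at scale $t$. On $(t/2,t)$ the gradient sits at a time comparable to $t$, so one writes $e^{-s\Delta}=e^{-(s/2)\Delta}e^{-(s/2)\Delta}$, extracts $s^{-1/2}$ from one factor and applies the $L^{q}\to L^p$ smoothing of Proposition \ref{Assaad} to the other. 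This inner split is not cosmetic: without it the singular factor and the volume weight sit at incompatible time scales and the Schur-type bound you invoke does not close.
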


\begin{proof}

Duhamel's formula for semigroups says that for all $f \in L^p(M)$ 
\begin{equation}\label{Duhamel}
e^{-tL} f = e^{-t\Delta} f - \int_0^t e^{-s\Delta} V e^{-(t-s)L} f ds.
\end{equation}
It follows that \begin{equation*}
|\nabla e^{-tL}f|^2 \leq 2 \left[ |\nabla e^{-t\Delta}f|^2  + \left|\int_0^t \nabla e^{-s\Delta} V e^{-(t-s)L} f ds\right|^2 \right]. \end{equation*} After integration on $(0, \infty)$ we obtain 
\begin{equation*}
(H_L^{(\nabla)}(f))^2 \leq C \left[ (H_\Delta(f))^2 + \int_0^\infty \left|\int_0^t \nabla e^{-s\Delta} V e^{-(t-s)L} f ds\right|^2 dt \right].
\end{equation*}
Therefore,
\begin{align*}
\|H_L^{(\nabla)}(f)\|^2_p \leq C  \left[ \|H_\Delta(f)\|^2_p + \left\| \int_0^\infty \left|\int_0^t \nabla e^{-s\Delta} V e^{-(t-s)L} f ds\right|^2 dt\right\|_{p/2} \right] \\
\leq C  \left[ \|f\|^2_p + \left\| \int_0^\infty \left|\int_0^t \nabla e^{-s\Delta} V e^{-(t-s)L} f ds\right|^2 dt\right\|_{p/2} \right].
\end{align*}
Here we used the boundedness of $H_\Delta$ on $L^p$. 
Hence, it is sufficient to establish that \begin{equation}
\label{finalestimate}  \left\| \int_0^\infty \left|\int_0^t \nabla e^{-sL} V e^{-(t-s)\Delta} f ds\right|^2 dt \right\|_{p/2} \leq C \| f \|_p^2.
\end{equation} We have
\begin{align*}
\Biggl\| \int_0^\infty \left|\int_0^t \nabla e^{-s\Delta} V e^{-(t-s)L} f ds\right|^2 dt\Biggr{\|}_{p/2} &\leq \int_0^\infty \left\| \left| \int_0^t  \nabla e^{-\Delta} V e^{-(t-s)L} fds \right|^2  \right\|_{p/2} dt \\
&\leq \int_0^\infty \left( \int_0^t   \left\| \nabla e^{-s\Delta} V e^{-(t-s)L} f \right\|_p ds \right)^2 dt.
\end{align*}
Set 
\begin{equation*}\left\{ \begin{aligned}
I_1 &:= \int_0^1  \left( \int_{0}^{t/2} \left\| \nabla e^{-s\Delta} V e^{-(t-s)L} f \right \|_p ds \right)^2 dt,\\
I_2 &:= \int_1^\infty  \left( \int_{0}^{t/2} \left\|  \nabla e^{-s\Delta} V e^{-(t-s)L} f \right \|_p ds \right)^2 dt,\\
I_3 &:= \int_0^1 \left( \int_{t/2}^{t} \left\|  \nabla e^{-s\Delta} V e^{-(t-s)L} f ds \right \|_p ds \right)^2 dt,\\
I_4 &:= \int_1^\infty \left(  \int_{t/2}^t \left\|  \nabla e^{-s\Delta} V e^{-(t-s)L} f\right \|_p ds \right)^2 dt.  \end{aligned} \right. \end{equation*}
We have $$ \left\| \int_0^\infty \left|\int_0^t \nabla e^{-s\Delta} V e^{-(t-s)L} f ds\right|^2 dt \right \|_{p/2} \leq 2 \left[ I_1 + I_2 + I_3 + I_4 \right].$$ 
We prove that each term $I_1, I_2, I_3$ and $I_4$ is bounded by $ C \|f \|_p^2$. By Proposition \ref{lemmagradsemi},
\begin{align*}
I_1 &\leq C \int_0^1  \left| \int_{0}^{t/2} s^{-1/2} \left\|  V e^{-(t-s)L} f ds \right\|_p ds \right|^2 dt  \\
&\leq C\int_0^1  \left| \int_{0}^{t/2} s^{-1/2} \left\|  \frac{V}{Vol(x,\sqrt{t-s})^\frac{1}{r_1}} Vol(x,\sqrt{t-s})^\frac{1}{r_1}e^{-(t-s)L} f \right\|_p ds \right|^2 dt  \\
&\leq C \int_0^1  \left| \int_{0}^{t/2} s^{-1/2} \left\|  \frac{V}{Vol(x,\sqrt{t-s})^\frac{1}{r_1}} \right\|_{r_1} \left\|Vol(x,\sqrt{t-s})^\frac{1}{r_1}e^{-(t-s)L} f \right\|_{q_1} ds \right|^2 dt \end{align*}
where $\frac{1}{p} = \frac{1}{r_1} + \frac{1}{q_1}$. Note that here $q_1$ has to satisfy $q_1 < {p_0}'$ which gives $p < \frac{{p_0}' r_1}{{p_0}' + r_1}$. Since $s < t/2$, $Vol(x,\sqrt{t-s}) \geq Vol(x,\sqrt{t/2})$. Thus,  
\begin{align*}
I_1 &\leq C \int_0^1  \left| \int_{0}^{t/2} s^{-1/2} \left\|  \frac{V}{Vol(x,\sqrt{t/2})^{\frac{1}{r_1}}} \right\|_{r_1} \left\|Vol(x,\sqrt{t-s})^{\frac{1}{r_1}}e^{-(t-s)L} f  \right\|_{q_1} ds \right|^2 dt  \\
&\leq C \int_0^1  \left\|  \frac{V}{Vol(x,\sqrt{t/2})^{\frac{1}{r_1}}} \right\|^2_{r_1} \left| \int_{0}^{t/2} s^{-1/2} \left\| Vol(x,\sqrt{t-s})^{\frac{1}{r_1}}e^{-(t-s)L} f \right\|_{q_1} ds \right|^2 dt.
\end{align*}
By Proposition \ref{Assaad}, $\|Vol(x,\sqrt{t-s})^{\frac{1}{r_1}}e^{-(t-s)L} f \|_{q_1} \leq C \|f\|_p$. Thus,
\begin{equation}\label{I1}
I_1 \leq  C \left( \int_0^{1/2}   \left\|  \frac{V}{Vol(x,\sqrt{t})^{\frac{1}{r_1}}} \right\|^2_{r_1} t dt\right) \|f \|_p^2 = C' \|f \|_p^2.
\end{equation}
We prove as for $I_1$ that \begin{equation}\label{I2}
I_2 \leq  C \left( \int_{1/2}^{\infty}   \left\|  \frac{V}{Vol(x,\sqrt{t})^{\frac{1}{r_2}}} \right\|^2_{r_2} t dt\right) \|f\|_p^2 = C' \|f\|_p^2.
\end{equation}
Note that reproducing the previous proof for $I_2$ implies that we have to choose $p < \frac{{p_0}'r_2}{{p_0}' + r_2}.$ Now we bound $I_3$ and $I_4$. By Proposition \ref{lemmagradsemi} we have
\begin{align*}
I_3 &= \int_0^1 \left| \int_{t/2}^t \left\| \nabla  e^{-\frac{s}{2}\Delta} e^{-\frac{s}{2}\Delta} V e^{-(t-s)L} f ds \right\|_p ds \right|^2 dt \\
&\leq C \int_0^1 \left| \int_{t/2}^t s^{-1/2} \left\|    e^{-\frac{s}{2}\Delta} V e^{-(t-s)L} f ds\right\|_p ds \right|^2 dt \\
&= C \int_0^1 \left| \int_{t/2}^t s^{-1/2} \left\|   e^{-\frac{s}{2}\Delta} \frac{Vol(x,\sqrt{s/2})^\frac{1}{r_1}}{Vol(x,\sqrt{s/2})^\frac{1}{r_1}} V e^{-(t-s)L} f ds\right\|_p ds \right|^22 dt. 
\end{align*}
By Proposition \ref{Assaad},  $e^{-\frac{s}{2}\Delta} Vol(x,\sqrt{\frac{s}{2}})$ is bounded from $L^{q_1}(M)$ to $L^{p}(M)$ with $\frac{1}{p} = \frac{1}{q_1} - \frac{1}{r_1}$.
Thus,

\begin{align*}
I_3 &\leq C \int_0^1 \left| \int_{t/2}^t s^{-1/2} \left\|   \frac{V}{Vol(x,\sqrt{s/2})^\frac{1}{r_1}} e^{-(t-s)L} f \right\|_{q_1} ds \right|^2 dt \\
&\leq C \int_0^1 \left| \int_{t/2}^t s^{-1/2} \left\|   \frac{V}{Vol(x,\sqrt{s/2})^\frac{1}{r_1}}\right\|_{r_1} \left\| e^{-(t-s)L} f \right\|_{p} ds \right|^2 dt  \\
&\leq C \int_0^1 \left| \int_{t/2}^t s^{-1/2} \left\|   \frac{V}{Vol(x,\sqrt{t/4})^\frac{1}{r_1}}\right\|_{r_1} \left\| e^{-(t-s)L} f \right\|_{p} ds \right|^2 dt
\end{align*} because $\frac{s}{2} \geq \frac{t}{4}$.  By the uniform boundedness of  $e^{-(t-s)L}$ on $L^p$ we have 
\begin{multline}\begin{aligned}\label{I3}
I_3 &\leq C \left( \int_0^1 \left\|   \frac{V}{Vol(x,\sqrt{t/4})^\frac{1}{r_1}}\right\|^2_{r_1}  \left[ \int_{t/2}^t s^{-1/2}  ds \right]^2 dt \right) \|f\|^2_p \\
&\leq C \left( \int_{0}^{1}   \left\|  \frac{V}{Vol(x,\sqrt{t/4})^{\frac{1}{r_1}}}
\right\|^2_{r_1} t dt\right) \|f\|^2_p \\
&= C' \|f\|^2_p.
\end{aligned}
\end{multline}
We prove in a similar way that \begin{equation}\label{I4}
 I_4 \leq  C \left( \int_{1/4}^{\infty}   \left\|  \frac{V}{Vol(x,\sqrt{t})^{\frac{1}{r_2}}} \right\|^2_{r_2} t dt \right) \|f\|^2_p = C' \|f\|_p^2.
\end{equation}
Combining \eqref{I1}, \eqref{I2}, \eqref{I3} and \eqref{I4} with \eqref{hypotheseintegre} we obtain \eqref{finalestimate}. Hence $\|H_L^{(\nabla)}(f)\|_p \leq C \|f \|_p$. \end{proof}
Finally we have a similar result for $H_L^{(V)}$.
\begin{theorem}\label{psup2}
Let $p>2$, assume $V^-$ satisfies \eqref{48} for some $\alpha \in (0,1)$. Define $p_0$ as before. Suppose there exist $r_1, r_2 > 2$ such that
\begin{equation}\label{hypotheseintegre2}
\int_0^1 \| \frac{|V|^{1/2} }{Vol(.,\sqrt{t})^{\frac{1}{r_1}}}\|^2_{r_1} dt < \infty  ,  \int_1^\infty \| \frac{|V|^{1/2}}{Vol(.,\sqrt{t})^{\frac{1}{r_2}}}\|^2_{r_2}  dt < \infty .
\end{equation}
If $N > 2$, then $H_L^{(V)}$ is bounded on $L^p(M)$ for $p \in [2,\frac{{p_0}'r}{{p_0}'+r})$, where $r = \inf(r_1,r_2)$. If $N \leq 2$, then $H_L^{(V)}$ is bounded on $L^p(M)$ for $p \in [2,\infty)$.
\end{theorem}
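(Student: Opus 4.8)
The plan is to imitate the proof of Theorem \ref{psup}, with the simplification that $H_L^{(V)}$ involves only the semigroup $e^{-tL}$ and not its gradient, so no Duhamel expansion is required. Since $p>2$ we have $p/2>1$, hence $\|\cdot\|_{p/2}$ is a norm and Minkowski's integral inequality gives
\begin{equation*}
\|H_L^{(V)}(f)\|_p^2 = \left\| \int_0^\infty |V|\,|e^{-tL}f|^2\,dt \right\|_{p/2} \leq \int_0^\infty \left\| |V|\,|e^{-tL}f|^2 \right\|_{p/2}\,dt = \int_0^\infty \left\| |V|^{1/2}\, e^{-tL}f \right\|_p^2\,dt .
\end{equation*}
So it suffices to bound the last integral by $C\|f\|_p^2$.

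For a fixed $t$, I would write $|V|^{1/2} e^{-tL}f = \frac{|V|^{1/2}}{Vol(\cdot,\sqrt{t})^{1/r_i}}\,\left( Vol(\cdot,\sqrt{t})^{1/r_i}\, e^{-tL}f\right)$ and apply Hölder's inequality with exponents $r_i$ and $q_i$, where $\frac{1}{p}=\frac{1}{r_i}+\frac{1}{q_i}$, to get
\begin{equation*}
\left\| |V|^{1/2}\, e^{-tL}f \right\|_p \leq \left\| \frac{|V|^{1/2}}{Vol(\cdot,\sqrt{t})^{1/r_i}} \right\|_{r_i} \left\| Vol(\cdot,\sqrt{t})^{1/r_i}\, e^{-tL}f \right\|_{q_i}.
\end{equation*}
Proposition \ref{Assaad} applied with the pair $(p,q_i)$ then gives $\left\| Vol(\cdot,\sqrt{t})^{1/r_i}\, e^{-tL}f\right\|_{q_i}\le C\|f\|_p$, provided $p_0<p\le q_i<p_0'$; for $N>2$ the condition $q_i<p_0'$ is equivalent to $p<\frac{p_0' r_i}{p_0'+r_i}$, while for $N\le 2$ it only requires $q_i$ to be finite (since $p_0'=\infty$).

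Finally I would split the $t$-integral at $t=1$, using the exponent $r_1$ on $(0,1)$ and $r_2$ on $(1,\infty)$, which yields
\begin{equation*}
\|H_L^{(V)}(f)\|_p^2 \leq C\left( \int_0^1 \left\| \frac{|V|^{1/2}}{Vol(\cdot,\sqrt{t})^{1/r_1}} \right\|_{r_1}^2\,dt + \int_1^\infty \left\| \frac{|V|^{1/2}}{Vol(\cdot,\sqrt{t})^{1/r_2}} \right\|_{r_2}^2\,dt \right)\|f\|_p^2 ,
\end{equation*}
and the bracket is finite by assumption \eqref{hypotheseintegre2}; this gives $\|H_L^{(V)}(f)\|_p\le C\|f\|_p$. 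Since $x\mapsto \frac{p_0' x}{p_0'+x}$ is increasing, imposing $p\in[2,\frac{p_0' r}{p_0'+r})$ with $r=\inf(r_1,r_2)$ makes both applications of Proposition \ref{Assaad} legitimate simultaneously, and for $N\le 2$ this is the limiting case $p_0'=\infty$.

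I expect the only real difficulty to be the bookkeeping of the exponents: one must ensure that the Hölder relation $\frac{1}{p}=\frac{1}{r_i}+\frac{1}{q_i}$ places $q_i$ inside the admissible window $(p_0,p_0')$ of Proposition \ref{Assaad}, and that the two windows arising from $r_1$ and from $r_2$ hold at once — this is precisely what forces $r=\inf(r_1,r_2)$ in the statement. Everything else is the routine combination of Minkowski's and Hölder's inequalities with the off-diagonal bound of Proposition \ref{Assaad}. Combined with Theorem \ref{psup} and the pointwise inequality $H_L(f)\le\sqrt{2}\,[\,H_L^{(\nabla)}(f)+H_L^{(V)}(f)\,]$, this also yields the $L^p$-boundedness of the full functional $H_L$ in the stated range.
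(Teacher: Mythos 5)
Your proposal is correct and follows the same route as the paper's proof: Minkowski's integral inequality (using $p/2\ge 1$), then for each $t$ the factorization $|V|^{1/2}=\frac{|V|^{1/2}}{Vol(\cdot,\sqrt{t})^{1/r_i}}\cdot Vol(\cdot,\sqrt{t})^{1/r_i}$ with Hölder and Proposition \ref{Assaad}, followed by the split of the $t$-integral at $t=1$ and the exponent bookkeeping $q_i<p_0'\Leftrightarrow p<\frac{p_0'r_i}{p_0'+r_i}$. The argument and the resulting constraints match the paper's.
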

\begin{proof} We have
\begin{align*}
\|H_L^{(V)} (f) \|_p &= \biggl\| \int_0^\infty \left| |V|^{1/2} e^{-tL}f\right|^{1/2} dt \biggr{\|}^{1/2}_{p/2} \\
& \leq \left( \int_0^\infty \biggl\| |V|^{1/2} e^{-tL} f\biggr{\|}^2_p dt \right)^{1/2} \\
& \leq \sqrt{2} \left[ \left( \int_0^1 \biggl\| |V|^{1/2} e^{-tL} f\biggr{\|}^2_p dt \right)^{1/2} + \left( \int_1^\infty \biggl\| |V|^{1/2} e^{-tL} f\biggr{\|}^2_p dt \right)^{1/2} \right].
\end{align*}
We bound the two latter integrals separately. One has
\begin{align*}
\int_0^1 \biggl\| |V|^{1/2} e^{-tL} f\biggr{\|}^2_p dt &=  \int_0^1 \biggl\| \frac{|V|^{1/2}}{Vol(x,\sqrt{t})^{1/r_1}}Vol(x,\sqrt{t})^{1/r_1} e^{-tL} f\biggr{\|}^2_p dt \\
&\leq \int_0^1 \biggl\| \frac{|V|^{1/2}}{Vol(x,\sqrt{t})^{1/r_1}}\biggr{\|}^2_{r_1} \biggl\| Vol(x,\sqrt{t})^{1/r_1} e^{-tL} f\biggr{\|}^2_{q_1} dt
\end{align*} where $\frac{1}{p} = \frac{1}{r_1} + \frac{1}{q_1}$. By Proposition \ref{Assaad}, we have $\| Vol(x,\sqrt{t})^{1/r_1} e^{-tL} f \|_{q_1} \leq C \|f\|_p$. Hence,
\begin{equation}\label{2022}
 \int_0^1 \biggl\| |V|^{1/2} e^{-tL} f\biggr{\|}^2_p dt \leq C  \left(\int_0^1 \biggl\| \frac{|V|^{1/2}}{Vol(x,\sqrt{t})^{1/r_1}}\biggr{\|}^2_{r_1} dt \right) \| f \|^2_p.
\end{equation}
Note that here $q_1$ has to satisfy $q_1 < {p_0}'$ what gives $p < \frac{{p_0}' r_1}{{p_0}' + r_1}$. The same argument gives 
\begin{equation}\label{2023}
 \int_1^\infty \biggl\| |V|^{1/2} e^{-tL} f\biggr{\|}^2_p dt \leq C \left(\int_1^\infty \biggl\| \frac{|V|^{1/2}}{Vol(x,\sqrt{t})^{1/r_2}}\biggr{\|}^2_{r_2} dt \right) \| f \|^2_p.
\end{equation}
Here we need $p < \frac{{p_0}' r_2}{{p_0}' + r_2}$. Together with \eqref{hypotheseintegre2}, \eqref{2022} and \eqref{2023} yield 
\begin{equation*}
\| H_L^{(V)} (f) \|_p \leq C \| f \|_p.
\end{equation*}
\end{proof}
\textbf{Remarks.}
If $Vol(x,t)$ has polynomial growth $Vol(x,t) \simeq t^N$ (for example, in $\mathbb{R}^N$) then the conditions \eqref{hypotheseintegre} or \eqref{hypotheseintegre2} read as $V \in  L^{\frac{N}{2}- \epsilon} \cap L^{\frac{N}{2} + \epsilon}$ for some positive $\epsilon$. In the general setting, we could not find implications between \eqref{hypotheseintegre} and \eqref{hypotheseintegre2}. In \cite{ouh}, it is shown that if $V \geq 0$ is not identically zero, then $H_L$ is not bounded on $L^p(\mathbb{R}^N)$ for $p>N$ if we assume there exists a positive bounded function $\phi$ such that $e^{-tL} \phi = \phi$ for all $t \geq 0$. It is true for a wide class of potentials, for example if $V \in L^{\frac{N}{2}- \epsilon} \cap L^{\frac{N}{2} + \epsilon}$ (see \cite{mcgil}). For a discussion on this property, see \cite{murata}. 

\section*{Acknowledgments}I would like to thank my PhD supervisor El Maati Ouhabaz for his help and advice during the preparation of this article. This research is partially supported by the ANR project RAGE "Analyse Réelle et Géométrie" (ANR-18-CE40-0012).
\bibliography{bibli}
\bibliographystyle{plain}
\end{document}